\documentclass[12pt]{amsart}

\usepackage[letterpaper,margin=1.1in]{geometry}
\usepackage{amssymb} 
\usepackage{graphicx}
\usepackage{xcolor}



\newtheorem{maintheorem}{Theorem}
\newtheorem{maincor}[maintheorem]{Corollary}

\newtheorem{theorem}{Theorem}[section]
\newtheorem{lemma}[theorem]{Lemma}
\newtheorem{proposition}[theorem]{Proposition}
\newtheorem{corollary}[theorem]{Corollary}

\theoremstyle{definition}
\newtheorem{definition}[theorem]{Definition}

\theoremstyle{remark}
\newtheorem{remark}[theorem]{Remark}

\newtheorem*{remark-no-number}{Remark}

\newcommand{\RR}{\mathbb{R}}
\newcommand{\ZZ}{\mathbb{Z}}
\newcommand{\NN}{\mathbb{N}}
\newcommand{\cT}{\mathcal{T}}
\newcommand{\Haus}{\mathcal{H}}
\newcommand{\two}{{I\!I}}
\newcommand{\three}{{I\!I\!I}}
\newcommand{\tJ}{\widetilde{J}}
\DeclareRobustCommand{\positivedensity}{{$\underline{D}^1(\mu,x)>0$ }}

\newcommand{\dist}{\mathop\mathrm{dist}\nolimits}
\newcommand{\diam}{\mathop\mathrm{diam}\nolimits}
\newcommand {\side}{\mathop\mathrm{side}\nolimits}
\newcommand{\lD}[1]{\mathop{\underline{D}^{#1}}\nolimits}
\newcommand{\uD}[1]{\mathop{\overline{D}^{\,#1}}\nolimits}
\newcommand{\spt}{\mathop\mathrm{spt}\nolimits}
\newcommand{\excess}{\mathop\mathrm{ex}\nolimits}
\newcommand{\HD}{\mathop\mathrm{HD}\nolimits}
\newcommand{\res}{\hbox{ {\vrule height .22cm}{\leaders\hrule\hskip.2cm} }} 

\newcommand{\double}{\mathop\mathsf{Double}\nolimits}
\newcommand{\bridges}{\mathsf{Bridges}}
\newcommand{\edges}{\mathsf{Edges}}
\newcommand{\cores}{{\mathsf{Cores}_{\two}}}
\newcommand{\phan}{\mathsf{Phantom}}
\newcommand{\leaves}{\mathsf{Leaves}}
\newcommand{\Top}{\mathop\mathsf{Top}}
\newcommand{\bad}{\mathsf{Bad}}
\newcommand{\good}{\mathsf{Good}}
\newcommand{\rect}{{rect}}
\newcommand{\pu}{{pu}}

\numberwithin{figure}{section}
\numberwithin{equation}{section}

\begin{document}

\title[Multiscale analysis of 1-rectifiable measures II]{Multiscale analysis of 1-rectifiable measures II: characterizations}
\thanks{M.~ Badger was partially supported by NSF DMS 1500382.
R.~ Schul was partially supported by NSF  DMS 1361473.}
\date{January 10, 2017}
\subjclass[2010]{Primary 28A75. Secondary 26A16, 42B99, 54F50}
\keywords{1-rectifiable measures, purely 1-unrectifiable measures, rectifiable curves, Jones beta numbers, Jones square functions, Analyst's traveling salesman theorem, doubling measures, Hausdorff densities, Hausdorff measures.}
\author{Matthew Badger \and Raanan Schul}
\address{Department of Mathematics\\ University of Connecticut\\ Storrs, CT 06269-3009}
\email{matthew.badger@uconn.edu}
\address{Department of Mathematics\\ Stony Brook University\\ Stony Brook, NY 11794-3651}
\email{schul@math.sunysb.edu}

\begin{abstract} A measure is 1-rectifiable if there is a countable union of finite length curves whose complement has zero measure. We characterize 1-rectifiable Radon measures $\mu$ in $n$-dimensional Euclidean space for all $n\geq 2$ in terms of positivity of the lower density and finiteness of a geometric square function, which loosely speaking, records in an $L^2$ gauge the extent to which $\mu$ admits approximate tangent lines, or has rapidly growing density ratios, along its support. In contrast with the classical theorems of Besicovitch, Morse and Randolph, and Moore, we do not assume an \emph{a priori} relationship between $\mu$ and 1-dimensional Hausdorff measure $\Haus^1$. We also characterize purely 1-unrectifiable Radon measures, i.e.~ locally finite measures that give measure zero to every finite length curve.
Characterizations of this form were originally conjectured to exist by  P.~Jones.
Along the way, we develop an $L^2$ variant of P.~Jones'  traveling salesman construction, which is of independent interest.
\end{abstract}

\maketitle

\setcounter{tocdepth}{1}
\tableofcontents

\section{Introduction}\label{s:intro}

A fundamental concept in geometric measure theory is a general notion of rectifiability of a set or a measure, which generalizes the classical notion of rectifiability of a curve. For sets, this notion of rectifiability is due to A.S. Besicovitch \cite{Bes28}. For measures, this notion of rectifiability is due to A.P. Morse and J. Randolph \cite{MR} and H. Federer \cite{Fed47,Federer} (see Definition \ref{def:rect} below). Rectifiability has been extensively studied for sets and also for measures $\mu$ that satisfy an additional regularity assumption, which is often expressed in terms of finiteness $\mu$-almost everywhere of the upper Hausdorff density $\uD{m}(\mu,\cdot)$ of the measure. This assumption is equivalent to an \emph{a priori} relationship between the null sets of the measure $\mu$ and  null sets of the $m$-dimensional Hausdorff measure $\Haus^m$, more specifically that $\mu$ vanishes on every set of $\Haus^m$ measure zero. One reason that this regularity assumption is often imposed it that it allows one to replace the class of Lipschitz images of bounded subsets of $\RR^m$ appearing in Federer's definition of rectifiability of a measure with bi-Lipschitz images or Lipschitz graphs or $C^1$ graphs without changing the class of rectifiable measures. For arbitrary (or doubling) Radon measures, however, it is known by an example of Garnett, Killip, and Schul \cite{GKS} that the class of measures that are rectifiable with respect to Lipschitz images is strictly larger than the class of measures that are rectifiable with respect to bi-Lipschitz images. While (countable) rectifiability of a set or a measure is an inherently qualitative property, a quantitative counterpart of the theory of rectifiability was developed in the early 1990s by P. Jones \cite{Jones-TSP} and by G. David and S. Semmes \cite{DS91,DS93}. One goal of theses investigations was to study the connection between rectifiability and singular integral operators. In David and Semmes' theory of uniformly rectifiable sets and measures, it is essential that the measures involved are Ahlfors regular, a strong form of regularity of a measure.

The work in this paper addresses studying Federer's definition of rectifiability without imposing the standing regularity hypotheses of past investigations.  We repurpose tools from the Jones-David-Semmes theory of quantitative rectifiability to characterize Federer 1-rectifiable measures using snapshots of a measure (beta numbers) at multiple scales. Moreover, we identify the 1-rectifiable and purely 1-unrectifiable parts of an arbitrary Radon measure $\mu$ in $\RR^n$ in terms of the pointwise behavior of the lower Hausdorff density $\lD1(\mu,x)$ and a weighted geometric square function $J^*_p(\mu,x)$, which records in an $L^p$ gauge the extent to which $\mu$ admits approximate tangent lines or has rapidly growing density ratios along its support. For the precise statement of these main results, see \S2. The central reason we restrict ourselves to $m$-rectifiability with $m=1$ is the special role that connectedness has for one-dimensional sets: every closed, connected set in $\RR^n$ of finite $\Haus^1$ measure is a Lipschitz image of a closed interval. The current lack of a Lipschitz parameterization theorem for surfaces is the key obstruction to understanding Federer $m$-rectifiability when $m\geq 2$. The innovation of this paper is to provide the first full treatment of rectifiability of arbitrary Radon measures, including measures which have infinite Hausdorff density or which are mutually singular with respect to Hausdorff measure.

\subsection{Decompositions of Radon measures} A \emph{Radon measure} $\mu$ on $\RR^n$ is a Borel regular outer measure that is finite on compact subsets of $\RR^n$.

\begin{definition}[Rectifiable and purely unrectifiable measures] \label{def:rect}
Let $n\geq 1$ and $m\geq 0$ be integers.  We say that a Radon measure $\mu$ on $\RR^n$ is \emph{$m$-rectifiable} (or equivalently, in Federer's terminology, \emph{$\RR^n$ is countably $(\mu,m)$ rectifiable}) if there exist countably many Lipschitz maps $f_i:[0,1]^m\rightarrow\RR^n$ such that $$\mu\left(\RR^n\setminus \bigcup_i f_i([0,1]^m)\right)=0,$$ where we define $[0,1]^m:=\{0\}$ when $m=0$. At the other extreme, we say that a Radon measure $\mu$ on $\RR^n$ is \emph{purely $m$-unrectifiable} if $\mu(f([0,1]^m))=0$ for every Lipschitz map $f:[0,1]^m\rightarrow\RR^n$. See \cite[pp.~251--252]{Federer}.
\end{definition}

When $m\geq n$, every measure on $\RR^n$ is trivially $m$-rectifiable.

When $m=0$, every Radon measure $\mu$ on $\RR^n$ can be written uniquely as $\mu=\mu^0_{\rect}+\mu^0_{\pu}$, where $\mu^0_{\rect}$ is $0$-rectifiable and $\mu^0_{\pu}$ is purely $0$-unrectifiable. The decomposition is given by \begin{align}\mu^0_{\rect}&=\mu\res\{x\in\RR^n: \lim_{r\downarrow 0}\mu(B(x,r))>0\},\\
\mu^0_{\pu}&=\mu\res\{x\in\RR^n: \lim_{r\downarrow 0}\mu(B(x,r))=0\}.\end{align} Here and below $\mu\res E$ denotes the \emph{restriction of $\mu$ to $E$} defined by $\mu\res E(F)=\mu(E\cap F)$ for all $F\subseteq\RR^n$.  The $0$-rectifiable part $\mu^0_\rect$ of $\mu$ is a weighted sum of Dirac masses at the \emph{atoms} $\{x\in\RR^n:\lim_{r\downarrow 0}\mu(B(x,r))>0\}$ of $\mu$. The purely $0$-unrectifiable part $\mu^0_{\pu}$ of $\mu$ is the \emph{atomless} part of $\mu$.

\begin{proposition}\label{p:decomp} Let $1\leq m\leq n-1$. Every Radon measure $\mu$ on $\RR^n$ can be written uniquely as \begin{equation}\label{e:decomp}\mu=\mu^m_{\rect}+\mu^m_{\pu},\end{equation} where $\mu^m_{\rect}$ is $m$-rectifiable and $\mu^m_{\pu}$ is purely $m$-unrectifiable.\end{proposition}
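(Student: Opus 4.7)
My plan is the standard exhaustion argument. Let $\mathcal{A}$ denote the class of Borel sets $A\subseteq\RR^n$ contained in a countable union of Lipschitz images of $[0,1]^m$; since each such image is compact, $\mathcal{A}$ consists of genuine Borel sets and is closed under Borel subsets and under countable unions. A Radon measure $\nu$ is $m$-rectifiable precisely when $\nu(\RR^n\setminus A)=0$ for some $A\in\mathcal{A}$, and purely $m$-unrectifiable precisely when $\nu(A)=0$ for every $A\in\mathcal{A}$ (by countable subadditivity applied to the definition).

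For existence, I would use that a Radon measure on $\RR^n$ is $\sigma$-finite to write $\RR^n=\bigsqcup_{j\in\NN}E_j$ with each $E_j$ Borel and $\mu(E_j)<\infty$. For each $j$ set $s_j:=\sup\{\mu(A\cap E_j):A\in\mathcal{A}\}\leq\mu(E_j)<\infty$, pick $A_j^{(i)}\in\mathcal{A}$ with $\mu(A_j^{(i)}\cap E_j)\to s_j$, and put $A_j:=\bigcup_i A_j^{(i)}\in\mathcal{A}$, so that $\mu(A_j\cap E_j)=s_j$. Let $N:=\bigcup_j(A_j\cap E_j)\in\mathcal{A}$, and define $\mu^m_\rect:=\mu\res N$ and $\mu^m_\pu:=\mu\res(\RR^n\setminus N)$. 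Then $\mu^m_\rect$ is $m$-rectifiable by construction. If some Lipschitz $f:[0,1]^m\to\RR^n$ had $\mu^m_\pu(f([0,1]^m))>0$, I could pick $j$ with $\mu(f([0,1]^m)\cap E_j\setminus N)>0$; since $A_j\cap E_j\subseteq N$, the set $A_j\cup f([0,1]^m)\in\mathcal{A}$ would satisfy
$$\mu\bigl((A_j\cup f([0,1]^m))\cap E_j\bigr)\geq s_j+\mu\bigl(f([0,1]^m)\cap E_j\setminus N\bigr)>s_j,$$
contradicting the definition of $s_j$. Hence $\mu^m_\pu$ is purely $m$-unrectifiable.

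For uniqueness, let $\mu=\nu+\sigma$ be any such decomposition and choose $R\in\mathcal{A}$ with $\nu(\RR^n\setminus R)=0$. Pure unrectifiability forces $\sigma(R)=0$, and since $\nu,\sigma\leq\mu$ as measures, splitting any Borel $E$ as $(E\cap R)\sqcup(E\setminus R)$ yields $\nu(E)=\mu(E\cap R)$ and $\sigma(E)=\mu(E\setminus R)$. Given a second decomposition $\mu=\tilde\nu+\tilde\sigma$ with corresponding $\tilde R\in\mathcal{A}$, applying pure unrectifiability of $\tilde\sigma$ to $R\in\mathcal{A}$ yields $\mu(R\setminus\tilde R)=\tilde\sigma(R)=0$, and symmetrically $\mu(\tilde R\setminus R)=0$; hence $\mu\res R=\mu\res\tilde R$, so $\nu=\tilde\nu$ and $\sigma=\tilde\sigma$. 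The only technical subtlety worth flagging is that $\sup\{\mu(A):A\in\mathcal{A}\}$ can be infinite and need not be attained globally, which is why the existence argument is localized to the $\sigma$-finite pieces $E_j$ where the suprema are finite and are realized as countable unions.
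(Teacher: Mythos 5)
Your proof is correct and follows essentially the same exhaustion-by-maximality argument as the paper: the paper takes $M_j=\sup_f\mu(B(0,j)\cap f([0,1]^m))$ over single Lipschitz maps, picks $1/j$-approximate maximizers, and derives the contradiction by concatenating two Lipschitz maps into one, whereas you attain the supremum exactly by closing the class under countable unions—a cosmetic difference. Your explicit uniqueness argument is a welcome addition, since the paper dispatches it with ``a similar argument.''
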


\begin{proof} We present a simple variation of \cite[Theorem 15.6]{Mattila}, which is tailored to the setting of finite measures of the form $\mu=\Haus^m\res E$. Let $\mu$ be an arbitrary Radon measure on $\RR^n$. For each $j\geq 1$, let $M_j = \sup_f \mu(B(0,j)\cap f([0,1]^m))$, where the supremum runs over all Lipschitz maps $f:[0,1]^m\rightarrow\RR^n$. Because $\mu$ is Radon, $M_j<\infty$ for all $j\geq 1$. Choose any sequence of Lipschitz maps $f_j:[0,1]^m\rightarrow \RR^n$ such that $$\mu(B(0,j)\cap f_j([0,1]^m))>M_j-1/j.$$ Then define  $\mu^m_{\rect}:=\mu\res V$ and $\mu^{m}_{\pu}:= \mu\res \RR^n\setminus V$, where $V:=\bigcup_{j=1}^\infty f_j([0,1]^m)$. It is clear that $\mu=\mu^m_\rect+\mu^m_\pu$ and $\mu^m_{\rect}$ is $m$-rectifiable. To see that $\mu^m_{\pu}$ is purely $m$-unrectifiable, assume for contradiction that $\mu^m_{\pu}(g([0,1]^m))>0$ for some Lipschitz map $g:[0,1]^m\rightarrow\RR^n$. Pick $j\geq 1$ large enough such that $g([0,1]^m)\subseteq B(0,j)$ and $\mu^m_{\pu}(g([0,1]^m))>1/j$. Next, let $h:[0,1]^m\rightarrow\RR^n$ be any Lipschitz map such that $h([0,1]^m)\supseteq f_j([0,1]^m)\cup g([0,1]^m)$. (For example, one could simply take $h$ to be a Lipschitz extension of the map defined by $h(x)=f_j(3x_1,x_2,\dots,x_m)$ for $x_1\in[0,1/3]$ and by $h(x)=g(3x_1-2,x_2,\dots,x_m)$ for $x_1\in[2/3,1]$.) It follows that \begin{equation*}\begin{split} M_j &\geq \mu(B(0,j)\cap h([0,1]^m))\\ &\geq \mu^m_{\rect}(B(0,j)\cap f_j([0,1]^m))+\mu^m_{\pu}(B(0,j)\cap g([0,1]^m))>(M_j-1/j)+1/j=M_j, \end{split}\end{equation*} an apparent contradiction. Therefore, $\mu^m_\pu$ is purely $m$-unrectifiable. A similar argument shows that the decomposition $\mu=\mu^m_\rect+\mu^m_\pu$ is unique.
\end{proof}

The abstract proof of Proposition \ref{p:decomp} given above does not provide a concrete method for identifying $\mu^m_{\rect}$ and $\mu^m_{\pu}$ for a given Radon measure $\mu$. A fundamental problem in geometric measure theory is to provide (geometric, measure-theoretic) characterizations of $\mu^m_{\rect}$ and $\mu^m_{\pu}$. When $n=2$ and $m=1$, this problem was first formulated and investigated by Besicovitch \cite{Bes28,Bes38} for positive and finite measures $\mu$ of the form $\mu=\Haus^1\res E$, $E\subseteq\RR^2$, and later by Morse and Randolph \cite{MR} (resp.~ Moore \cite{Moore}) for Radon measures $\mu$ on $\RR^2$ (resp.~ $\RR^n$, $n\geq 3$) such that $\mu\ll\Haus^1$. Here and below $\Haus^m$ denotes the \emph{$m$-dimensional Hausdorff measure on $\RR^n$} (see e.g.~\cite{Federer} or \cite{Mattila}), normalized to agree with the Lebesgue measure in $\RR^m$. The condition $\mu\ll \Haus^1$, called \emph{absolute continuity}, means that $\mu(E)=0$ for every Borel set $E$ such that $\Haus^1(E)=0$. In this paper, we provide characterizations of the 1-rectifiable part $\mu^1_{\rect}$ and the purely 1-unrectifiable part $\mu^1_{\pu}$ of an arbitrary Radon measure $\mu$ on $\RR^n$ (see \S\ref{s:new}).  We emphasize that in contrast with previous works, our main result does not require an \emph{a priori} relationship between $\mu$ and the 1-dimensional Hausdorff measure $\Haus^1$. A remarkable feature of the proof of our characterization is that we adapt techniques originating from the theory of quantitative rectifiability to study the qualitative rectifiability of measures.
In fact, our identification of the rectifiable part of the measure $\mu$ is constructive in nature.

\subsection{Hausdorff densities and rectifiability} Let $1\leq m\leq n-1$. The \emph{lower} and \emph{upper $m$-dimensional Hausdorff densities} of a Radon measure $\mu$ at $x\in\RR^n$ are defined by \begin{equation} \lD{m}(\mu,x) = \liminf_{r\downarrow 0} \frac{\mu(B(x,r))}{\omega_m r^m}\in[0,\infty]\end{equation} and \begin{equation} \uD{m}(\mu,x) = \limsup_{r\downarrow 0} \frac{\mu(B(x,r))}{\omega_m r^m}\in[0,\infty],\end{equation} respectively, where $\omega_m$ is the $m$-dimensional Hausdorff measure of the unit ball in $\RR^m$. Whenever $\lD{m}(\mu,x)=\uD{m}(\mu,x)$, we may denote the common value by $D^m(\mu,x)$.

A Radon measure $\mu$ satisfies $\mu\ll \Haus^m$ if and only if $\uD{m}(\mu,x)<\infty$ at $\mu$-a.e.~$x\in\RR^n$. In addition, for Radon measures
which happen to be of the form $\mu=\Haus^m\res E$, the upper density satisfies $2^{-m}\leq \uD{m}(\mu,x)\leq 1$ at $\mu$-a.e.~$x\in\RR^n$. (For example, see Exercise 4 and Theorem 6.2 in \cite[Chapter 6]{Mattila}, respectively.) If $\mu$ is a Radon measure on $\RR^n$ and $\mu$ is $m$-rectifiable, then $\lD{m}(\mu,x)>0$ at $\mu$-a.e.~ $x\in\RR^n$ (see \cite[Lemma 2.7]{BS}).
Rectifiability of absolutely continuous measures is tied up with the existence of the density $D^m(\mu,x)$.

\begin{theorem}[Mattila \cite{Mattila75}]\label{t:M} Let $1\leq m\leq n-1$. Assume that $\mu=\Haus^m\res E$ is Radon for some $E\subseteq\RR^n$. Then $\mu$ is $m$-rectifiable if and only if the Hausdorff $m$-density of $\mu$ exists and $D^m(\mu,x)=1$ at $\mu$-a.e.~ $x\in\RR^n$.\end{theorem}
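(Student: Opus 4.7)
The plan is to prove the two implications separately, with the forward direction being relatively routine and the converse carrying the bulk of the difficulty.

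For the forward direction, suppose $\mu = \Haus^m \res E$ is $m$-rectifiable. By Definition \ref{def:rect} I may cover $E$ up to a $\mu$-null set by countably many Lipschitz images $f_i([0,1]^m)$. Because the density at $x$ is a local quantity, it suffices to check $D^m(\mu,x)=1$ at $\mu$-a.e.\ $x\in f_i([0,1]^m)$ for each $i$. I would apply Rademacher's theorem to differentiate $f_i$ at Lebesgue-a.e.\ point of $[0,1]^m$, discard the rank-deficient set whose image is $\Haus^m$-null by the area formula, and at any point of maximal rank perform the standard blow-up / area formula computation: rescaled copies of $f_i([0,1]^m)$ converge to an affine $m$-plane, which forces $\mu(B(x,r))/\omega_m r^m\to 1$.

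For the converse, assume $D^m(\mu,x)=1$ at $\mu$-a.e.\ $x$. The strategy is to first produce an approximate tangent $m$-plane at $\mu$-a.e.\ point and then invoke Federer's criterion that a Radon measure with $\mu$-a.e.\ approximate tangent $m$-planes is $m$-rectifiable. To build approximate tangent planes, I would analyze the family of tangent measures $\mathrm{Tan}(\mu,x)$ at $x$, i.e.\ the weak-$*$ subsequential limits of $r^{-m}T_{x,r,\#}\mu$ with $T_{x,r}(y)=(y-x)/r$ as $r\downarrow 0$. The density hypothesis, combined with upper semicontinuity of density ratios under weak-$*$ convergence, forces every tangent measure $\nu$ to satisfy $\nu(B(y,s))=\omega_m s^m$ for all $y\in\spt\nu$ and all $s>0$; such measures are called $m$-uniform.

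The main obstacle, and the heart of Mattila's argument, is the structural result that every $m$-uniform measure in $\RR^n$ must be a translate of $\Haus^m$ restricted to an $m$-plane through the origin. For $m=1$ this reduces to a relatively elementary moment/geodesic computation using only the uniform mass condition on balls, but for general $m$ it is genuinely delicate and anticipates the flatness theorems Preiss later pushed much further. Granting this, I would then run a standard comparison argument: once every tangent measure at $x$ is flat, the density-one hypothesis pins down a single $m$-plane $V_x$ at each such $x$ (a priori the plane might vary along subsequences, but uniqueness of the density forces a single plane up to rotation, and a routine covering argument rules out genuine non-uniqueness on a set of positive measure). Consequently the $\mu$-mass in $B(x,r)$ lying at distance $>\varepsilon r$ from $V_x$ is $o(r^m)$, which is exactly the approximate tangent $m$-plane condition; applying Federer's criterion $\mu$-a.e.\ then completes the proof.
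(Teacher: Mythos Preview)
The paper does not prove this theorem; it is cited from Mattila \cite{Mattila75} and discussed only historically. So there is no ``paper's own proof'' to compare against, but your proposal deserves comment on its merits.

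Your forward direction is fine as an outline. The converse, however, has a genuine gap. You assert that ``every $m$-uniform measure in $\RR^n$ must be a translate of $\Haus^m$ restricted to an $m$-plane,'' and you attribute this to Mattila. Both parts of that sentence are wrong. First, the structural claim is false for $m\geq 3$: the paper itself notes (in the discussion following Theorem~\ref{t:P}) that $\Haus^3\res\{(x,t)\in\RR^3\times\RR:|x|^2=t^2\}$ is a $3$-uniform measure that is not flat. So the step where you pass from ``all tangent measures are $m$-uniform'' to ``all tangent measures are flat'' fails in general, and your argument collapses precisely in the range $m\geq 3$ where Mattila's contribution lies. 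Second, the whole tangent-measure framework is anachronistic: tangent measures were introduced by Preiss \cite{Preiss} more than a decade after \cite{Mattila75}. Mattila's proof follows Marstrand's earlier approach \cite{Marstrand61,Marstrand} and works directly with the set $E$, using the density-one hypothesis together with delicate counting and covering arguments to produce approximate tangent planes without ever classifying uniform measures.

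If you want a tangent-measure proof, you would need the much deeper fact (due to Preiss) that \emph{flat} uniform measures form an isolated component of the cone of uniform measures, so that tangent measures are flat at $\mu$-a.e.\ point even though individual uniform measures need not be. That is essentially the content of Theorem~\ref{t:P}, not Theorem~\ref{t:M}.
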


\begin{theorem}[Preiss \cite{Preiss}]\label{t:P} Let $1\leq m\leq n-1$ and let $\mu$ be a Radon measure on $\RR^n$. Then $\mu$ is $m$-rectifiable and $\mu\ll\Haus^m$ if and only if the Hausdorff $m$-density of $\mu$ exists and $0<D^m(\mu,x)<\infty$ at $\mu$-a.e.~ $x\in\RR^n$.\end{theorem}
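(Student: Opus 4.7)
The plan is to treat the two implications separately, with the reverse direction being the substantive and deeper part (Preiss's theorem proper).

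For the forward direction ($\Rightarrow$), suppose $\mu$ is $m$-rectifiable with $\mu\ll\Haus^m$. By Definition~\ref{def:rect}, there exist Lipschitz maps $f_i:[0,1]^m\to\RR^n$ such that $V:=\bigcup_i f_i([0,1]^m)$ has full $\mu$-measure; each $f_i([0,1]^m)$ has finite $\Haus^m$ measure, so $\Haus^m\res V$ is a Radon, $m$-rectifiable, $\sigma$-finite measure. Absolute continuity then yields a Radon-Nikodym density $g\geq 0$ with $\mu=g\cdot\Haus^m\res V$, where $g$ is locally $\Haus^m$-integrable and positive $\mu$-a.e. Applying Theorem~\ref{t:M} to $\Haus^m\res V$ gives $D^m(\Haus^m\res V,x)=1$ at $\mu$-a.e.~$x\in V$, and combining this with the Lebesgue-Besicovitch differentiation theorem for $\mu=g\cdot\Haus^m\res V$ yields $D^m(\mu,x)=g(x)$ at $\mu$-a.e.~$x$, which is positive and finite as required.

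For the reverse direction ($\Leftarrow$), the easy first observation is that $\uD{m}(\mu,x)<\infty$ at $\mu$-a.e.~$x$ forces $\mu\ll\Haus^m$ via a standard comparison of Hausdorff measure against upper densities. The main task is to establish $m$-rectifiability. I would pass to Preiss's tangent measures: the weak-$*$ limits of suitably rescaled blow-ups of $\mu$ about $x$. The density hypotheses guarantee these limits are nontrivial Radon measures with positive finite mass on balls. The key intermediate statement is that at $\mu$-a.e.~$x$, every tangent measure $\nu$ is \emph{$m$-uniform}, meaning $\nu(B(y,r))=\theta\omega_m r^m$ for every $y\in\spt\nu$ and every $r>0$. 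This follows by a bootstrapping argument: tangent measures of tangent measures are again tangent measures, and the $m$-density $D^m$ is invariant under blow-up, which forces the density function of a generic tangent measure to be constant.

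The main obstacle is the rigidity part of Preiss's work, which classifies (at $\mu$-a.e.~$x$) the tangent measures as flat: up to a multiplicative constant, $\nu$ is $\Haus^m$ restricted to an affine $m$-plane through the origin. For $m=1,2$ this reduces to the statement that every $m$-uniform measure in $\RR^n$ is flat, while for $m\geq 3$ exotic uniform examples exist and an additional iterated blow-up analysis is required. In either case, the proof depends on a delicate moment analysis for uniform measures and careful control of their supports via a uniformized metric on Radon measures. Once tangent measures are known to be flat $\mu$-a.e., the Marstrand-Mattila criterion (essentially developed in \cite{Mattila75}) promotes flatness of tangent measures to $m$-rectifiability of $\mu$, concluding the proof.
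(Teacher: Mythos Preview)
The paper does not prove Theorem~\ref{t:P}; it is stated as background and attributed to Preiss~\cite{Preiss}, with the surrounding paragraphs sketching only the history (Besicovitch, Morse--Randolph, Moore, Marstrand, Mattila) and pointing the reader to De~Lellis~\cite{DeLellis} for an exposition. So there is nothing in the paper to compare your argument against.

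That said, your outline is a faithful high-level summary of Preiss's actual strategy: reduce to showing tangent measures are $m$-uniform, then invoke the rigidity analysis of uniform measures to conclude flatness, then apply a Marstrand--Mattila-type rectifiability criterion. Two cautions. First, the forward direction as you wrote it leans on Theorem~\ref{t:M}, which is stated only for measures of the form $\Haus^m\res E$; you would need to localize to pieces $V_j$ with $\Haus^m(V_j)<\infty$ before applying it, and then combine with a genuine density/differentiation argument relative to $\Haus^m\res V_j$ rather than Lebesgue measure. Second, calling the reverse direction a ``proof'' is generous: the passage from $m$-uniform to flat for $m\geq 3$ is the entire content of~\cite{Preiss} and cannot be summarized away as ``a delicate moment analysis.'' What you have written is an accurate roadmap, not a proof, and the paper itself makes no claim to supply one.
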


The proofs of Theorems \ref{t:M} and \ref{t:P} in arbitrary dimensions $1\leq m\leq n-1$ have a distinguished place in the history of geometric measure theory.
When $n=2$ and $m=1$, Theorem \ref{t:M}  is originally due to Besicovitch \cite{Bes28} (also see \cite{Bes38}) and Theorem \ref{t:P} is originally due to Morse and Randolph \cite{MR}. An extension of the later result to $n\geq 3$ was given by Moore \cite{Moore}.
In the aforementioned works, Besicovitch also characterized rectifiability of $\Haus^1\res E$, $E\subseteq\RR^2$, by existence $\Haus^1$-a.e.~of approximate tangent lines to $E$; and also in terms of the $\Haus^1$ measure of orthogonal projections (of subsets) of $E$ onto lines. Characterizations of rectifiability of $\Haus^m\res E$, $E\subseteq\RR^n$, in terms of approximate tangents and projections were extended to all dimensions $1\leq m\leq n-1$ by Federer \cite{Fed47}. The next case of Theorem \ref{t:M} was settled by Marstrand \cite{Marstrand61}, who proved the density characterization of rectifiability when $n=3$ and $m=2$. A few years later, in \cite{Marstrand}, Marstrand proved that if there exists a Radon measure $\mu$ on $\RR^n$ and a real number $s>0$ such that $\lim_{r\downarrow 0} r^{-s}\mu(B(x,r))$ exists and is positive and finite on a set of positive $\mu$ measure, then $s$ is an integer. Mattila's proof  of the general case of Theorem \ref{t:M}, which is based on Marstrand's approach, was published in \cite{Mattila75} nearly 50 years after the pioneering paper by Besicovitch.

To prove the general case of Theorem \ref{t:P}, Preiss \cite{Preiss} had to give a careful analysis of the geometry of \emph{$m$-uniform measures} $\mu$ on $\RR^n$, i.e.~ Radon measures with the property that $\mu(B(x,r))=\omega_m\,r^m$ for all $r>0$, for all $x\in\RR^n$ such that $\mu(B(x,r))>0$ for all $r>0$. Although it is obvious that the restriction $\Haus^m\res L$ of $m$-dimensional Hausdorff measure to an $m$-plane $L$ in $\RR^n$ is an example of an $m$-uniform measure, it is less clear if these are the only examples. Surprisingly, starting with $m\geq 3$, there exist ``non-flat" uniform measures such as $\Haus^3\res\{(x,t)\in\RR^3\times \RR:|x|^2=t^2\}$; see \cite{Preiss} (also \cite{KP}). Preiss introduced several original ideas to study the geometry of non-flat uniform measures, including the notion of a \emph{tangent measure} to a Radon measure. For an in-depth introduction, we refer the reader to the exposition of Preiss' theorem by De Lellis \cite{DeLellis}. The classification of $m$-uniform measures in $\RR^n$ is as of yet incomplete, but some progress has recently been made by Tolsa \cite{Tolsa-uniform} and Nimer \cite{nimer,nimer3}.

The deep connections between the existence of densities and rectifiability of sets and absolutely continuous measures in Euclidean space, as described above, have been explored in metric spaces beyond $\RR^n$ by several authors; see e.g.~\cite{PT}, \cite{Kirchheim}, \cite{Lorent03}, \cite{Lorent04}, \cite{CT15}. For perspectives on rectifiability in metric spaces related to existence of tangents or projection properties, see e.g.~\cite{Ambrosio-Kirchheim}, \cite{Bate}, \cite{Hajlasz}, \cite{Bate-Li}.

The \emph{support} $\spt \mu$ of a Borel measure $\mu$ on a metric space $X$ is the set of all $x\in X$ such that $\mu(B(x,r))>0$ for all $r>0$. In \cite{AM15}, Azzam and Mourgoglou proved that positive lower density is a sufficient condition for
a
locally finite Borel measure to be 1-rectifiable under additional global assumptions on the measure and its support. A locally finite Borel measure $\mu$ on $X$ is \emph{doubling} if there is a constant $C>1$ such that $\mu(B(x,2r))\leq C\mu(B(x,r))$ for all $x\in\spt\mu$ and for all $r>0$.

\begin{theorem}[Azzam and Mourgoglou {\cite{AM15}}] \label{t:AM} Assume that $\mu$ is a doubling measure whose support is a connected metric space, and let $E\subseteq\spt\mu$ be compact. Then $\mu\res E$ is 1-rectifiable if and only if $\lD1(\mu,x)>0$ for $\mu$-a.e.~$x\in E$.
\end{theorem}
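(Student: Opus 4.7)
The plan is to handle the two directions separately; the forward direction is an immediate application of an already-cited lemma, and all the content is in the converse.

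For the forward direction, if $\mu\res E$ is 1-rectifiable, then by \cite[Lemma 2.7]{BS} its lower 1-density is positive at $(\mu\res E)$-a.e.\ point; this is exactly $\lD1(\mu,x)>0$ for $\mu$-a.e.\ $x\in E$.

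For the converse, assume $\lD1(\mu,x)>0$ for $\mu$-a.e.\ $x\in E$. First I would reduce to a setting with uniform density bounds. Write $E$, up to a $\mu$-null set, as a countable union of the Borel sets
$$F_{c,r_0}=\{x\in E:\mu(B(x,r))\geq cr\text{ for all }0<r\leq r_0\},$$
indexed by positive rational $c$ and $r_0$, and use inner regularity of $\mu$ to replace each $F_{c,r_0}$ by a compact subset. Since a countable union of 1-rectifiable measures is 1-rectifiable, it suffices to prove $\mu\res F$ is 1-rectifiable for a compact $F\subseteq E$ on which $\mu(B(x,r))\geq cr$ uniformly for $0<r\leq r_0$. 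A simple packing argument then shows that every $\delta$-separated subset of $F$ has at most $C/\delta$ points for $\delta\leq r_0$, so $F$ has $\sigma$-finite $\Haus^1$-measure and admits efficient covers by a dyadic collection of balls at each scale.

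The main work is to assemble rectifiable curves covering $\mu$-almost all of $F$. Here both hypotheses on $\mu$ enter crucially: connectedness of $\spt\mu$ guarantees that any two nearby points of $\spt\mu$ can be joined by a continuum inside $\spt\mu$ of comparable diameter; doubling of $\mu$ lets us transfer mass estimates across scales and across points of the support. Using the scale-by-scale cover together with these two geometric properties, I would run a Jones-style traveling salesman construction (in the spirit of \cite{Jones-TSP}), joining net points from successive dyadic generations by short continua inside $\spt\mu$ and controlling the total length via a $\beta$-number square function. I expect the main obstacle to be establishing summability of this square function using only a one-sided density bound, without any matching upper bound on $\mu$: the resolution is to use doubling to organize the ``bad'' balls, in which $\spt\mu$ deviates too much from a line, into a stopping-time family whose total $\mu$-mass is controlled by $\mu(F)<\infty$. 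The curves produced then cover $\mu\res F$ modulo a null set, and countable union over the reduction in the first step completes the proof.
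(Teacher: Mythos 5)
The paper does not prove this statement: Theorem \ref{t:AM} is quoted from Azzam--Mourgoglou \cite{AM15} as background, so there is no internal proof to compare against. Your forward direction is fine and is exactly what the paper itself records as Lemma \ref{l:positive2} (i.e.\ \cite[Lemma 2.7]{BS}), and your initial reductions (decomposing $\{\lD1(\mu,\cdot)>0\}$ into sets $F=F_{c,r_0}$ with uniform lower density, and the packing argument giving $\Haus^1(F)<\infty$) are correct. But finiteness of $\Haus^1(F)$ does not imply rectifiability, and everything after that point is a plan rather than a proof.

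The genuine gap is the length bound on the curves, which is the entire content of the theorem. Any Jones-style construction requires a multi-scale Carleson packing condition of the form $\sum_{Q}\beta(Q)^2\diam Q<\infty$ (or, in the language of this paper, finiteness of a density-normalized Jones function $\mu$-a.e.); your proposed mechanism --- a stopping-time family of bad balls ``whose total $\mu$-mass is controlled by $\mu(F)$'' --- is a single-scale mass bound and says nothing about how many scales are bad at a given point, so it does not yield summability. Nothing in the proposal converts the hypotheses (doubling plus connected support) into such a packing estimate, and by this paper's own Theorem \ref{t:big} and the Martikainen--Orponen example in Remark \ref{r:mo}, positive lower density alone is \emph{not} sufficient for rectifiability of a general Radon measure, so that implication is precisely where connectedness and doubling must do quantitative work that you have not supplied. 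Two further problems: your stated use of connectedness --- that nearby points of $\spt\mu$ can be joined by a continuum in $\spt\mu$ of comparable diameter --- is false for general connected sets (connected does not imply locally connected; consider the topologist's sine curve), and what connectedness actually provides is only a lower bound $\Haus^1_\infty(\spt\mu\cap B(x,r))\gtrsim r$; moreover the theorem is stated for supports that are connected \emph{metric spaces}, where lines, $\beta$-numbers, and the sufficiency half of the Euclidean traveling salesman theorem are not available in the form you invoke. The actual proof in \cite{AM15} is a genuinely different and substantially longer argument, and your sketch does not reconstruct it.
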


It is important to emphasize that in Theorem \ref{t:AM}, no assumption is made on the upper density of the measure. Thus, Theorem \ref{t:AM} characterizes a class of 1-rectifiable measures, which includes measures that are not absolutely continuous with respect to $\Haus^1$. Examples of 1-rectifiable doubling measures $\mu$ on $\RR^n$ with $\spt\mu=\RR^n$ and $\mu\perp\Haus^1$, which satisfy the hypothesis of Theorem \ref{t:AM}, were constructed by Garnett, Killip, and Schul \cite{GKS}. Interestingly, such examples give measure zero to every bi-Lipschitz image of $\RR^m$ in $\RR^n$ (in particular, they give measure zero to every Lipschitz graph), but nevertheless give full measure to a countable family of Lipschitz images of $\RR$. Following \cite{BS}, it is also known that such measures have  density $D^1(\mu,x)=\infty$ $\mu$-a.e., with $\mu(B(x,r))/r\rightarrow\infty$ at a rapid rate as $r \rightarrow 0$.

\subsection{$L^p$ Jones beta numbers and rectifiability}

Let $1\leq p<\infty$ and let $\mu$ be a Radon measure on $\RR^n$. For every bounded Borel set $E\subseteq\RR^n$ of positive diameter (typically, either a ball $B(x,r)$ or a cube $Q$) and straight line $\ell$ in $\RR^n$, we define $\beta_{p}(\mu,E,\ell)\in[0,1]$ by
\begin{equation}\label{e:beta-p-def}
\beta_{p}(\mu,E,\ell)^p = \int_E \left(\frac{\dist(x,\ell)}{\diam E}\right)^p\frac{d\mu(x)}{\mu(E)},\end{equation}
where we interpret $\beta_{p}(\mu,E,\ell)=0$ if $\mu(E)=0$. We then define $\beta_{p}(\mu,E)\in[0,1]$ by \begin{equation}\beta_{p}(\mu,E)= \inf_\ell \beta_{p}(\mu,E,\ell),\end{equation} where the infimum runs over all straight lines in $\RR^n$. Note that $\beta_p(\mu,E,\ell)$ and $\beta_p(\mu,E)$ are increasing in the exponent $p$ for all $\mu$, $E$, and $\ell$. Higher dimensional beta numbers $\beta_p^{(m)}(\mu,E)$ may be defined by letting $\ell$ range over all $m$-dimensional affine planes in $\RR^n$ instead of over all lines in $\RR^n$.

In \cite{Jones-TSP}, Peter Jones characterized subsets of finite length curves in the plane in terms of
an $\ell^2$ sum of a $\sup$-norm variant of \eqref{e:beta-p-def}; see  Definition \ref{d:beta-infty} and  Theorem \ref{t:tst} below.
One goal was to bring these quantities into the study of singular integrals
\cite{Jones-Sq-fn-Cauchy-int-etc}. Guy David and Stephen Semmes did this, and more, for curves and surfaces in their investigation of \emph{uniformly rectifiable} sets and measures (e.g.~see \cite{DS91} and \cite{DS93}).  Much work has been done in connecting beta numbers and rectifiability (highlights include \cite{Pajot97}, \cite{Leger}, \cite{Lerman}, \cite{H08}, \cite{LW-I}, \cite{DT}, \cite{BS}, \cite{ADT1}, \cite{BS2}, \cite{NV}), but we single out two recent papers, \cite{Tolsa-n} and \cite{AT}, and state one theorem, which is a combination of their main results.

\begin{theorem}[{Tolsa \cite{Tolsa-n}, {Azzam and Tolsa \cite{AT}}}]
Let $1\leq m\leq n-1$ and let $\mu$ be a Radon measure on $\RR^n$. Then $\mu$ is $m$-rectifiable and $\mu\ll\Haus^m$ if and only if $0<\uD{m}(\mu,x)<\infty$ and
\begin{equation}\label{e:tolsa-beta}\int_0^1 \beta^{(m)}_2(\mu,B(x,r))^2\,\frac{\mu(B(x,r))}{r^m}\,\frac{dr}{r}<\infty\quad\text{at $\mu$-a.e.~ $x\in\RR^n$}.\end{equation}
\end{theorem}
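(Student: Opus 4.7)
The plan is to prove the two implications separately, since each direction has a distinct flavor: the forward direction reduces to a Dorronsoro/Jones-type estimate on Lipschitz graphs, while the converse requires a corona/stopping-time construction of Lipschitz coverings, in the spirit of David--Semmes and David--Toro.

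For the forward implication, assume $\mu$ is $m$-rectifiable and $\mu\ll\Haus^m$. First I would upgrade the Lipschitz-image decomposition: since $\mu\ll\Haus^m$, standard rectifiability theory (Federer, or Mattila) lets us replace Lipschitz images with countably many Lipschitz $m$-graphs $\Gamma_i$, up to a $\mu$-null set. Applying Preiss's Theorem~\ref{t:P} to the Radon--Nikodym derivative $f=d\mu/d\Haus^m$ gives $0<D^m(\mu,x)<\infty$ at $\mu$-a.e.\ $x$, so $0<\uD{m}(\mu,x)<\infty$ holds. It then remains to control the square function on each $\Gamma_i$. By the density upper bound and absolute continuity, it suffices (after localization) to bound
\[
\int_0^1 \beta^{(m)}_2(\Haus^m\res\Gamma_i,B(x,r))^2\,\frac{dr}{r}<\infty\qquad\text{at $\Haus^m$-a.e.\ }x\in\Gamma_i,
\]
because the density factor $\mu(B(x,r))/r^m$ is essentially bounded where the density is finite. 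This bound follows from Dorronsoro's theorem applied to the Lipschitz graphing function, which gives a Carleson-type $L^2$ estimate summing squared flatness deviations over scales. The cross-contribution between different $\Gamma_i$ at a generic point is handled by the $\mu$-a.e.\ density-one property of each piece.

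For the converse, assume $0<\uD{m}(\mu,x)<\infty$ and the $\beta_2^{(m)}$ square function is finite $\mu$-a.e. The first step is a standard truncation: for each integer $k$, set
\[
E_k:=\bigl\{x\in B(0,k)\,:\,k^{-1}\le \uD{m}(\mu,x)\le k,\ \textstyle\int_0^1 \beta^{(m)}_2(\mu,B(x,r))^2\tfrac{\mu(B(x,r))}{r^m}\tfrac{dr}{r}\le k\bigr\}.
\]
It suffices to prove that $\mu\res E_k$ is $m$-rectifiable and $\mu\res E_k\ll\Haus^m$ for each $k$. Absolute continuity follows from the upper density bound on $E_k$ (standard comparison of $\mu$ with $\Haus^m$ via Vitali covering). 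For rectifiability, I would run a dyadic corona/stopping-time decomposition on the balls $B(x,r)$ intersecting $E_k$: at each scale, the Carleson packing $\sum \beta_2^{(m)}(\mu,B)^2 \mu(B)/r(B)^m$ is finite by Fubini applied to the square-function hypothesis, so on most of $E_k$ the measure is well-approximated by an $m$-plane at most scales. On each stopping-time tree, one builds a $C^1$-like Lipschitz graph approximating the support, invoking the David--Toro (Reifenberg-flat) construction or its $L^2$ variant from the Azzam--Tolsa work; the Carleson sum being small on the tree is precisely the hypothesis needed to run the Reifenberg parametrization. Summing over countably many stopping-time trees exhausts $\mu\res E_k$ up to a null set, and summing over $k$ gives the full decomposition.

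The main obstacle is the converse: one needs a multi-scale parametrization theorem that uses only the $L^2$ (Jones $\beta_2$) flatness rather than the $L^\infty$ flatness required in classical Reifenberg theory. For $m=1$ this is essentially the traveling salesman construction combined with the density hypothesis to upgrade $\beta_2$ control into a finite-length curve covering. For $m\ge 2$ there is no Lipschitz parametrization theorem available under purely Federer rectifiability hypotheses, and this is exactly why Tolsa and Azzam--Tolsa must assume $\mu\ll\Haus^m$ and impose the upper-density finiteness; the absolute continuity is what lets one compare $\mu$ with $\Haus^m$ on the approximating $m$-planes and run the higher-dimensional corona argument. Checking that the stopping-time sums telescope correctly against the integral hypothesis, and that the resulting Lipschitz pieces cover $\mu$-almost all of $E_k$, is the delicate bookkeeping step.
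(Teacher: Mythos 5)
The first thing to note is that the paper does not prove this theorem: it is stated purely as background, being a combination of the main results of Tolsa \cite{Tolsa-n} (the necessity of \eqref{e:tolsa-beta} for rectifiable, absolutely continuous measures) and Azzam--Tolsa \cite{AT} (the sufficiency), and the paper's own machinery ($\beta_p^*$ numbers, the traveling salesman construction of Proposition \ref{p:curve}, Theorem \ref{t:big}) is aimed at the $m=1$ case \emph{without} absolute continuity, by quite different and more elementary methods. So there is no in-paper argument to compare against; what you have written is a roadmap of two substantial external papers rather than something that could be checked line by line.

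As a roadmap, your converse direction is essentially faithful to \cite{AT}: truncation by density and square-function bounds, a corona/stopping-time decomposition, and an $L^2$ Reifenberg-type parametrization in the style of David--Toro is exactly that architecture, and $\mu\res E_k\ll\Haus^m$ does follow from the upper-density bound by a standard covering comparison. The forward direction, however, has a genuine gap where you say the cross-contribution between different graphs $\Gamma_i$ is ``handled by the $\mu$-a.e.\ density-one property of each piece.'' The quantity $\beta_2^{(m)}(\mu,B(x,r))$ integrates \emph{all} of $\mu$ in $B(x,r)$, including the mass carried by every other piece $\Gamma_j$; density one only says this extraneous mass is $o(\mu(B(x,r)))$ as $r\downarrow 0$, with no rate, whereas \eqref{e:tolsa-beta} demands that these errors be square-summable against $dr/r$ over all scales. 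Dorronsoro's theorem controls $\beta_2(\Haus^m\res\Gamma_i,\cdot)$, not $\beta_2(\mu,\cdot)$, and bridging that gap---together with the fact that $d\mu/d\Haus^m$ is merely a nonnegative $L^1$ density that can oscillate arbitrarily---is precisely why Tolsa's necessity argument is itself a long corona-type construction rather than a short reduction to Lipschitz graphs. Your proposal correctly names the ingredients, but neither implication is proved by it, and the forward half as written would fail at the step quoted above.
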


The factor $\mu(B(x,r))/r^m$ appearing in \eqref{e:tolsa-beta} translates between different conventions in the definition of beta numbers. In \cite{Tolsa-n} and \cite{AT}, the authors use the convention that integration in the definition of $\beta^{(m)}_2(\mu,B(x,r))$ is against the measure $r^{-m}\mu$, whereas our convention is that integration is against the measure $\mu(B(x,r))^{-1}\,\mu$. Note that
\begin{equation}\label{e:beta-m}
\beta^{(m)}_2(\mu,B(x,r))^2\,\frac{\mu(B(x,r))}{r^m}=
\inf_{\ell} \int_{B(x,r)} \left(\frac{\dist(x,\ell)}{\diam (B(x,r))}\right)^2\frac{d\mu(x)}{r^m}.
\end{equation}

A good way to access a more comprehensive survey about uniform rectifiability, its connection to singular integrals, and its connection to analytic capacity, is to read David and Semmes \cite{DS93}, Pajot \cite{Pajot}, and Tolsa \cite{Tolsa-book}. We also make some remarks in \cite[\S4]{BS}.

\subsection{Conventions} We may write $a\lesssim b$ (or $b\gtrsim a$) to denote that $a\leq Cb$ for some absolute constant $0<C<\infty$ and write $a\sim b$ if $a\lesssim b$ and $b\lesssim a$. Likewise we may write $a\lesssim_{t} b$ (or $b\gtrsim_{t} a$) to denote that $a\leq C b$ for some constant $0<C<\infty$ that may depend on a list of parameters $t$ and write $a\sim_t b$ if $a\lesssim_t b$ and $b\lesssim_t a$.

Below we use several grids of dyadic cubes. Unless stated otherwise, we take all dyadic cubes in $\RR^n$ to be half open, say of the form $$Q=\left[\frac{j_1}{2^k},\frac{j_1+1}{2^k}\right)\times \dots \times \left[\frac{j_n}{2^k},\frac{j_n+1}{2^k}\right),\quad k,j_1,\dots,j_n\in\ZZ.$$  The \emph{side length} of $Q$, which we denote by $\side Q$, is $2^{-k}$; the \emph{diameter} of $Q$, which we denote by $\diam Q$, is $2^{-k}\sqrt{n}$. Let $\Delta(\RR^n)$ denote the collection of all dyadic cubes in $\RR^n$ and let $\Delta_1(\RR^n)$ denote the collection of all dyadic cubes in $\RR^n$ of side length at most 1.
For any cube $Q$ and $\lambda>0$, we let $\lambda Q$ denote the unique cube in $\RR^n$ that is obtained by dilating $Q$ by a factor of $\lambda$ with respect to the center of $Q$. Note that $\side \lambda Q= \lambda \side Q$ and $\diam \lambda Q = \lambda \diam Q$ for all cubes $Q$ and for all $\lambda>0$.

\section{Main results and organization of the paper} \label{s:new}


In our main result, Theorem \ref{t:big}, we characterize the 1-dimensional rectifiable and purely unrectifiable parts of arbitrary Radon measures in terms of the pointwise behavior of the lower density and a geometric square function to be defined below. Also, see Theorem \ref{t:pd-big}, where we characterize the rectifiable and purely unrectifiable parts of a pointwise doubling measure in terms of the pointwise behavior of a simpler geometric square function alone.

For any dyadic cube $Q$ in $\RR^n$, define the set $\Delta^*(Q)$ of \emph{nearby cubes} to be the set of all dyadic cubes $R$ such that  \begin{equation}\side Q\leq \side R \leq 2\side Q\end{equation} and \begin{equation}3R\subseteq 1600\sqrt{n}\,Q.\end{equation} The constant $1600\sqrt{n}$ in the definition of $\Delta^*(Q)$ is chosen to be large enough to invoke Proposition \ref{p:curve} in the proof of Lemma \ref{l:draw}, but has not been optimized.

Let $1\leq p<\infty$ and let $\mu$ be a Radon  measure on $\RR^n$. For all $Q\in\Delta(\RR^n)$, we define $\beta_p^*(\mu,Q)\in[0,1]$ by \begin{equation}\beta_p^{*}(\mu,Q)^2 = \inf_\ell \max\left\{\beta_p(\mu,3R,\ell)^2\min\left(\frac{\mu(3R)}{\diam 3R},1\right):R\in \Delta^*(Q)\right\},\end{equation} where as usual the infimum runs over all straight lines in $\RR^n$. Note that $\beta_p^*(\mu,Q)=0$ whenever $\mu(1600\sqrt{n}\,Q)=0$, and $\beta_p^*(\mu,Q)$ is increasing in $p$ for all $\mu$ and $Q$. When $p=2$, \begin{equation}\begin{split}\label{e:beta2}\beta_2(\mu,3R,\ell)^2&\min\left(\frac{\mu(3R)}{\diam 3R},1\right) \\ &= \int_{3R} \left(\frac{\dist(x,\ell)}{\diam 3R}\right)^2 \min\left(\frac{1}{\diam 3R},\frac{1}{\mu(3R)}\right)\,d\mu(x).\end{split}\end{equation} Compare the normalizations in \eqref{e:beta-p-def}, \eqref{e:beta-m}, and \eqref{e:beta2}.

Define a \emph{density-normalized Jones function} $J^*_p(\mu,x)$ associated to the numbers $\beta^*_p(\mu,Q)$ as follows. For every $n\geq 2$, $1\leq p<\infty$, and Radon measure $\mu$ on $\RR^n$, define
\begin{equation}\label{e:J*def}
J^{*}_{p}(\mu,x):=\sum_{Q\in\Delta_1(\RR^n)} \beta^{*}_{p}(\mu,Q)^2\frac{\diam Q}{\mu(Q)}\chi_Q(x)\in[0,\infty]\quad\text{for all }x\in\RR^n,
\end{equation} (In the definition, we use the convention $0/0=0$ and $1/0=\infty$.)
This is a variant of the density-normalized Jones function $\tJ_2(\mu,x)$ used in \cite{BS}, which was associated to the beta numbers $\beta_2(\mu,3Q)$. Peter Jones conjectured circa 2000 that these types of weighted Jones functions could be used to characterize rectifiabilty of a measure (personal communication). In this paper's main result, Theorem \ref{t:big}, we verify Jones' conjecture by using $J^*_p(\mu,x)$ to identify the 1-rectifiable and purely 1-unrectifiable parts of an arbitrary Radon measure.

\begin{maintheorem}[characterization of the 1-rectifiable / purely 1-unrectifiable decomposition] \label{t:big} Let $n\geq 2$ and let $1\leq p\leq 2$. If $\mu$ is a Radon measure on $\RR^n$, then the decomposition $\mu=\mu^1_{\rect}+\mu^1_{\pu}$ in \eqref{e:decomp} is given by \begin{align} \mu^1_{\rect} &= \mu\res \left\{x\in\RR^n: \lD{1}(\mu,x)>0\text{ and }J_{p}^{*}(\mu,x)<\infty\right\},\\
\mu^1_{\pu} &= \mu \res\left\{x\in\RR^n: \lD{1}(\mu,x)=0\text{ or } J_p^{*}(\mu,x)=\infty\right\}.\end{align}\end{maintheorem}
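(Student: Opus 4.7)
The plan is to verify, in this order, the two set-equalities
\begin{equation*}
\mu^1_{\rect} = \mu \res G, \qquad \mu^1_{\pu} = \mu \res B,
\end{equation*}
where $G := \{x \in \RR^n : \lD1(\mu,x) > 0 \text{ and } J_p^*(\mu,x) < \infty\}$ and $B := \RR^n \setminus G$. Since $G$ and $B$ are complementary Borel sets and $\mu = \mu\res G + \mu\res B$, uniqueness in Proposition \ref{p:decomp} reduces the theorem to showing that $\mu\res G$ is 1-rectifiable and $\mu\res B$ is purely 1-unrectifiable.

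\textbf{Necessity.} Decompose $B = B_0 \cup B_\infty$ with $B_0 = \{\lD1(\mu,\cdot) = 0\}$ and $B_\infty = \{\lD1(\mu,\cdot) > 0,\ J_p^*(\mu,\cdot) = \infty\}$. For any Lipschitz image $\Gamma$ in $\RR^n$, the measure $\mu\res\Gamma$ is 1-rectifiable, so by \cite[Lemma 2.7]{BS} its lower density is positive $\mu\res\Gamma$-a.e.; since $\lD1(\mu,\cdot) \geq \lD1(\mu\res\Gamma,\cdot)$, this forces $\mu(\Gamma \cap B_0) = 0$. For $B_\infty$, monotonicity $\beta_p^* \leq \beta_q^*$ for $p \leq q$ shows that it suffices to treat $p=2$. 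The key ingredient is a Jones-type square function estimate: for any Lipschitz curve $\Gamma$ of finite $\Haus^1$-length, one establishes a total-energy bound of the form
\begin{equation*}
\int\sum_{Q \in \Delta_1(\RR^n)} \beta_2^*(\mu,Q)^2 \,(\diam Q)\, \chi_Q(x)\, d(\mu\res\Gamma)(x) \lesssim \Haus^1(\Gamma),
\end{equation*}
proved by a Carleson-type argument that exploits the fact that $\mu\res\Gamma$ lives near a single curve. Dividing each summand by $\mu(Q)$ and applying the positive lower density on $B_\infty$ converts this into pointwise $\mu$-a.e.~finiteness of $J_2^*(\mu,\cdot)$ on $\Gamma$, hence $\mu(\Gamma \cap B_\infty) = 0$.

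\textbf{Sufficiency.} Write
\begin{equation*}
G = \bigcup_{k,M \geq 1} G_{k,M}, \qquad G_{k,M} := \{x \in G : \lD1(\mu,x) > 1/k,\ J_p^*(\mu,x) \leq M\}.
\end{equation*}
It suffices to cover $\mu$-a.e.~point of each $G_{k,M}$ by countably many Lipschitz images of $[0,1]$. Using inner regularity and Egorov on the tail of $J_p^*$, pass to a compact subset $E \subseteq G_{k,M}$ on which $\mu(B(x,r)) \geq r/k$ holds uniformly for all $x \in E$ and all $0 < r < \delta_0$, and on which the deep tail $\sum_{Q \ni x,\, \diam Q < \delta_0}\beta_p^*(\mu,Q)^2\,\diam Q/\mu(Q)$ is uniformly small. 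On such $E$ invoke the $L^p$ traveling salesman construction promised in the abstract: the design of $\beta_p^*(\mu,Q)$---one line fitting $\mu$ on \emph{every} $R \in \Delta^*(Q)$ simultaneously, weighted by $\min(\mu(3R)/\diam 3R,1)$---is calibrated so that a stopping-time construction with cores, bridges, and leaves analogous to \cite{Jones-TSP,BS} produces a connected rectifiable set $\Gamma_E$ containing a positive $\mu$-fraction of $E$ and satisfying
\begin{equation*}
\Haus^1(\Gamma_E) \lesssim \diam E + \sum_{Q\text{ meets }E} \beta_p^*(\mu,Q)^2\,\diam Q.
\end{equation*}
The uniform density bound on $E$ converts the right-hand sum into $\int_E J_p^*(\mu,x)\,d\mu(x) \lesssim M\mu(E) < \infty$, so $\Gamma_E$ is rectifiable. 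Iterating on $E \setminus \Gamma_E$ exhausts $\mu$-a.e.~point of $E$ by a countable union of rectifiable curves, yielding 1-rectifiability of $\mu\res G_{k,M}$ and, summing over $k,M$, of $\mu\res G$.

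\textbf{The expected main obstacle} is the $L^p$ traveling salesman construction used in the sufficiency step: building a rectifiable curve with length controlled by a sum of squared starred beta numbers for an arbitrary Radon measure that may be mutually singular with $\Haus^1$ or have infinite upper density. One must handle scales at which $\mu(3R)/\diam 3R$ is small (the justification for the min-weight inside $\beta_p^*$), propagate the compatibility of approximating lines across the entire family $\Delta^*(Q)$ to guarantee connectedness of the reconstructed set, and extract a geometric length estimate from an $L^p$ deviation without any a priori control on $\uD1(\mu,\cdot)$. Once this machinery is in place, the necessity direction reduces to a comparatively structural Carleson-type square-function estimate on a single rectifiable curve.
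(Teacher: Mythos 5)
Your overall architecture coincides with the paper's: the same partition into $G$ and $B$, uniqueness from Proposition \ref{p:decomp}, a traveling-salesman-type square function estimate on a single curve for the unrectifiable half (the paper's Proposition \ref{p:1L} and Theorem \ref{t:nec}), and an $L^2$ Jones-type construction through centers of mass for the rectifiable half (the paper's Proposition \ref{p:curve}, Lemma \ref{l:draw}, and Theorem \ref{t:suf}). Two corrections on the necessity half: the energy estimate must read $\sum_{Q}\beta_2^*(\nu,Q)^2\diam Q\lesssim_n\Haus^1(\Gamma)+\nu(\RR^n\setminus\Gamma)$ --- the second term is genuinely needed because $\beta_2^*(\mu,Q)$ sees all of $\mu$ near $Q$ and not just $\mu\res\Gamma$, so one first restricts $\mu$ to a compact neighborhood of $\Gamma$ to make that term finite --- and no lower density enters there, since $\int_\Gamma J_2^*(\mu,x)\,d\mu(x)\leq\sum_{\mu(\Gamma\cap Q)>0}\beta_2^*(\mu,Q)^2\diam Q$ follows simply from $\mu(\Gamma\cap Q)\leq\mu(Q)$.

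The genuine gap is in the sufficiency step, where you convert the pointwise bound $J_p^*(\mu,\cdot)\leq M$ on $E$ into a length estimate. The inequality $\sum_{Q\ \text{meets}\ E}\beta_p^*(\mu,Q)^2\diam Q\lesssim\int_E J_p^*(\mu,x)\,d\mu(x)$ is false in general: a cube $Q$ meeting $E$ may have $\mu(E\cap Q)\ll\mu(Q)$ (for instance most of $\mu(Q)$ sitting on an atom in $Q\setminus E$), in which case the weight $\diam Q/\mu(Q)$ suppresses the contribution of $Q$ to $J_p^*$ on $E$ while $\beta_p^*(\mu,Q)^2\diam Q$ remains large; neither the uniform lower density on $E$ nor Egorov applied to the tail repairs this. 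The paper's fix is the localization lemma (Lemma \ref{l:localize}): discard the bad cubes $R$ with $\mu(A\cap R)\leq\varepsilon\mu(A)\mu(R)$, which costs only an $\varepsilon\,\mu(\Top(\cT))$-fraction of the measure, and over the surviving good subtree the unnormalized beta sum is at most $N/\varepsilon$; the curve is then drawn through the leaves of that subtree only. A second, related omission: the center-of-mass construction needs $\mu(3R)\geq c\diam 3R$ along entire branches of the tree, not merely at points of $E$, because Lemma \ref{l:lerman} controls $\dist(z_{3R},\ell)$ by $\beta_p(\mu,3R,\ell)\diam 3R$ and recovering this from $\beta_p^*$ costs a factor $\min(\mu(3R)/\diam 3R,1)^{-1/2}$; this is precisely why the paper introduces the $c$-adapted quantities $\beta_p^{*,c}$, $J_p^{*,c}$ and the lower-regular trees $\cT_x$ in Theorem \ref{t:suf}. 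With these two devices inserted, your outline becomes the paper's proof.
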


\begin{maincor}[characterization of 1-rectifiable measures]\label{t:rect}Let $n\geq 2$ and let $1\leq p\leq 2$. If $\mu$ is a Radon measure on $\RR^n$, then $\mu$ is 1-rectifiable if and only if $\lD{1}(\mu,x)>0$ and $J^*_p(\mu,x)<\infty$ at $\mu$-a.e.~$x\in\RR^n$.\end{maincor}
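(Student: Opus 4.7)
The plan is to derive this corollary directly from Theorem \ref{t:big}, using the uniqueness of the decomposition in Proposition \ref{p:decomp}. The argument has two elementary logical steps and no substantive geometric content beyond what is already in the theorem.

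First, I would reduce the statement ``$\mu$ is 1-rectifiable'' to the condition $\mu^1_{\pu} = 0$. If $\mu$ is 1-rectifiable, then setting $\mu'_{\rect} = \mu$ and $\mu'_{\pu} = 0$ gives a valid decomposition of $\mu$ into a 1-rectifiable measure plus a purely 1-unrectifiable measure; by the uniqueness clause of Proposition \ref{p:decomp}, this must coincide with $\mu^1_{\rect} + \mu^1_{\pu}$, so $\mu^1_{\pu} = 0$. Conversely, if $\mu^1_{\pu} = 0$, then $\mu = \mu^1_{\rect}$ is 1-rectifiable by construction. Hence 1-rectifiability of $\mu$ is equivalent to $\mu^1_{\pu}(\RR^n) = 0$.

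Next, I would plug in the explicit formula for $\mu^1_{\pu}$ supplied by Theorem \ref{t:big}, namely $\mu^1_{\pu} = \mu \res E$ where
$$E = \{x \in \RR^n : \lD{1}(\mu,x) = 0 \text{ or } J_p^{*}(\mu,x) = \infty\}.$$
The condition $\mu^1_{\pu} = 0$ then reads $\mu(E) = 0$, which is precisely the assertion that $\lD{1}(\mu,x) > 0$ and $J_p^{*}(\mu,x) < \infty$ at $\mu$-a.e.~$x \in \RR^n$. Chaining the two equivalences gives the biconditional claimed by the corollary.

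Since the corollary is a direct logical consequence of Theorem \ref{t:big}, there is no substantive obstacle here; all the difficulty is packed into the proof of Theorem \ref{t:big}. In particular, no new curve construction, no new density estimate, and no new beta-number estimate is needed beyond what is used to prove the theorem. The corollary is simply the ``diagonal'' case $\mu^1_{\pu} = 0$ of the pointwise characterization of the two pieces of the decomposition.
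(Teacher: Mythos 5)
Your proposal is correct and is essentially identical to the paper's own argument, which also deduces the corollary from Theorem \ref{t:big} via the observation that $\mu$ is 1-rectifiable if and only if $\mu^1_{\pu}=0$ (using the uniqueness in Proposition \ref{p:decomp}). You merely spell out the uniqueness step that the paper leaves implicit.
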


\begin{maincor}[characterization of purely 1-unrectifiable measures] \label{t:pu} Let $n\geq 2$ and let $1\leq p\leq 2$. If $\mu$ is a Radon measure on $\RR^n$, then $\mu$ is purely 1-unrectifiable if and only if  $\lD{1}(\mu,x)=0$ or $J^{*}_{p}(\mu,x)=\infty$ at $\mu$-a.e.~$x\in\RR^n$.\end{maincor}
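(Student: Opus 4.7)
The plan is to derive Corollary \ref{t:pu} as an essentially immediate consequence of Theorem \ref{t:big}. The strategy is to observe that ``$\mu$ is purely $1$-unrectifiable'' is equivalent to ``$\mu^1_{\rect} = 0$'' (by uniqueness of the decomposition in Proposition \ref{p:decomp}), and then apply Theorem \ref{t:big} to convert the latter into a pointwise statement.

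More concretely, write
\[ R := \{x \in \RR^n : \lD{1}(\mu,x) > 0 \text{ and } J^*_p(\mu,x) < \infty\} \]
and let $P := \RR^n \setminus R$. Theorem \ref{t:big} identifies $\mu^1_{\rect} = \mu\res R$ and $\mu^1_{\pu} = \mu\res P$. For the forward direction, I would argue that if $\mu$ is purely $1$-unrectifiable, then by uniqueness in Proposition \ref{p:decomp} one has $\mu^1_{\rect} = 0$ as a Radon measure, so $\mu(R) = (\mu\res R)(\RR^n) = 0$; equivalently, $\mu$-a.e.\ $x \in \RR^n$ satisfies $\lD{1}(\mu,x) = 0$ or $J^*_p(\mu,x) = \infty$. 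Conversely, if $\mu$-a.e.\ point lies in $P$, then $\mu(R) = 0$, and Theorem \ref{t:big} gives $\mu^1_{\rect} = \mu\res R = 0$, so that $\mu = \mu^1_{\pu}$ is purely $1$-unrectifiable.

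The hard part is not this corollary but Theorem \ref{t:big} itself; Corollary \ref{t:pu} is simply the contrapositive viewpoint on the same decomposition, complementary to Corollary \ref{t:rect}. Both directions of the ``if and only if'' come for free once one has the pointwise identification of $\mu^1_{\rect}$ and $\mu^1_{\pu}$ provided by Theorem \ref{t:big}, so no new analytic ingredient is required here.
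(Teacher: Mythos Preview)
Your proposal is correct and matches the paper's own argument essentially verbatim: the paper notes that $\mu$ is purely 1-unrectifiable if and only if $\mu^1_{\rect}=0$, and then invokes Theorem \ref{t:big} to conclude.
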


The proof of Theorem \ref{t:big} and its corollaries takes up \S\S \ref{s:nec}--\ref{s:abcd} below. A description of each of these sections appears at the end of this section. The restriction to exponents $p\geq 1$ in Theorem \ref{t:big} appears in the proof of Lemma \ref{l:lerman}; the restriction to $p\leq 2$ appears in the proof of Proposition \ref{p:1L}. It is an \emph{open problem} to determine if the conclusion of the theorem holds in the range $p>2$. The restriction to half open cubes (in the characteristic function $\chi_Q$) in the definition of $J^*_p(\mu,x)$ is imposed so that in the proof of Theorem \ref{t:suf}, we may use Lemma \ref{l:localize} (also see Remark \ref{rk:star}).

The methods that we develop to prove Theorem \ref{t:big} also yield a characterization of rectifiability of a measure with respect to a single rectifiable curve. Let $1\leq p<\infty$ and let $\mu$ be a Radon measure on $\RR^n$. For all $Q\in\Delta(\RR^n)$, we define $\beta_p^{**}(\mu,Q)\in[0,1]$ by \begin{equation}\label{e:starstar}\beta_p^{**}(\mu,Q) = \inf_\ell \max\left\{\beta_p(\mu,3R,\ell):R\in \Delta^*(Q)\right\},\end{equation} where the infimum runs over all straight lines in $\RR^n$.

\begin{maintheorem}[Traveling salesman theorem for measures]\label{t:tstm} Let $n\geq 2$ and let $1\leq p<\infty$. Let $\mu$ be a finite Borel measure on $\RR^n$ with bounded support. If $\Gamma\subseteq\RR^n$ is a rectifiable curve such that $\mu(\RR^n\setminus\Gamma)=0$, then \begin{equation}\label{e:S*} S^{**}_p(\mu):=\sum_{Q\in\Delta(\RR^n)} \beta_p^{**}(\mu,Q)^2\diam Q\lesssim_n \Haus^1(\Gamma).\end{equation} Conversely, if $S^{**}_p(\mu)<\infty$, then there is a rectifiable curve $\Gamma$ such that $\mu(\RR^n\setminus \Gamma)=0$ and \begin{equation}\Haus^1(\Gamma) \lesssim_{n} \diam \spt\mu + S^{**}_p(\mu).\end{equation}
\end{maintheorem}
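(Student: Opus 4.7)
The plan is to prove the two directions separately, with most of the technical work falling on the sufficiency implication.

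\emph{Necessity.} Since $\beta_p$ is nondecreasing in $p$ and dominated by the $L^\infty$ beta
$\beta_\infty(\mu,3R,\ell):=\sup_{x\in 3R\cap\spt\mu}\dist(x,\ell)/\diam 3R$, it suffices to bound the corresponding $L^\infty$ version of $\beta_p^{**}(\mu,Q)$. Because $\mu(\RR^n\setminus\Gamma)=0$ and $\Gamma$ is closed, $\spt\mu\subseteq\Gamma$, so $\beta_\infty(\mu,3R,\ell)\leq \sup_{y\in 3R\cap\Gamma}\dist(y,\ell)/\diam 3R$. Moreover every $R\in\Delta^*(Q)$ satisfies $3R\subseteq Q^*:=1600\sqrt n\,Q$ with $\diam 3R\gtrsim_n \diam Q^*$, so $\beta_p^{**}(\mu,Q)\lesssim_n \beta_\infty(\Gamma,Q^*)$, the classical Jones beta of $\Gamma$ on the dilated cube. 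A standard dilation-robust form of P.~Jones' traveling salesman theorem (as recorded in Theorem \ref{t:tst} below) then gives $\sum_Q \beta_\infty(\Gamma,Q^*)^2\diam Q\lesssim_n \Haus^1(\Gamma)$, which yields the necessity bound.

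\emph{Sufficiency.} Since $\beta_p^{**}$ is nondecreasing in $p$, it is enough to treat $p=1$, the weakest (hence most demanding) hypothesis. The approach is a multiscale construction of $\Gamma$ in the tradition of Jones \cite{Jones-TSP} and its higher-dimensional version by Schul, but now carried out for an abstract Radon measure rather than a set. At each dyadic scale $k$, identify the cubes $Q$ of side $\sim 2^{-k}$ carrying $\mu$-mass; for those with $\beta_1^{**}(\mu,Q)$ small, the hypothesis delivers a single line $\ell_Q$ that fits $\mu$ simultaneously on every neighbor $3R$, $R\in\Delta^*(Q)$. Stitch these local line segments into a polygonal curve $\Gamma_k$ at scale $2^{-k}$, and refine $\Gamma_k$ to $\Gamma_{k+1}$ by inserting local detours near each cube that needs finer resolution. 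A ``cone'' / ``farthest insertion'' estimate (in the spirit of Lemma \ref{l:draw} and Proposition \ref{p:curve}) bounds each detour's contribution by $C_n\,\beta_1^{**}(\mu,Q)^2\diam Q$. Arzel\`a--Ascoli produces a limit curve $\Gamma=\lim_k \Gamma_k$, and summing the length increments over all scales gives the bound $\Haus^1(\Gamma)\lesssim_n \diam\spt\mu + S_1^{**}(\mu)$.

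\emph{Main obstacle.} The central difficulty, absent from the set-based TST, is verifying that the limit curve actually contains $\spt\mu$ up to a $\mu$-null set, not merely the bulk of $\mu$'s mass. With the $L^\infty$ beta, a small value forces every support point in $3Q$ to lie near $\ell_Q$ and hence near $\Gamma$, so capture is automatic. With $L^p$ averaging, $\mu$ is free to place small mass far from $\ell_Q$, so some additional argument is required. The remedy is precisely the $\max$ over $\Delta^*(Q)$ built into the definition of $\beta_p^{**}$: if some $x\in\spt\mu$ were to lie a definite distance from $\ell_Q$, the sub-cube $R\in\Delta^*(Q)$ containing $x$ would satisfy $\mu(3R)>0$, and $\beta_p(\mu,3R,\ell_Q)$ would inherit a non-negligible lower bound from the mass concentrated near $x$, contradicting the choice of $\ell_Q$. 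Implementing this observation as a stopping-time argument that pushes a given support point $x$ down through all scales containing it, while simultaneously keeping the total length budget controlled by $\sum \beta_1^{**}(\mu,Q)^2\diam Q$, is the main technical challenge of the proof.
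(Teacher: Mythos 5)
Your necessity argument is essentially the paper's: since $\spt\mu\subseteq\Gamma$, dominate $\beta_p(\mu,3R,\ell)$ by the sup-distance of $\spt\mu\cap 1600\sqrt n\,Q$ to $\ell$ normalized by $\diam 3R\gtrsim_n\diam(1600\sqrt n\,Q)$, and invoke the dilation-robust half of the traveling salesman theorem (Corollary \ref{c:tst}). That half is correct.

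The sufficiency half has a genuine gap, located exactly where you flag the main obstacle. Your proposed remedy --- that if some $x\in\spt\mu$ lay a definite distance from $\ell_Q$, the cube $R\in\Delta^*(Q)$ containing $x$ would force a non-negligible lower bound on $\beta_p(\mu,3R,\ell_Q)$ --- fails: $\beta_p(\mu,3R,\ell_Q)^p$ is the integral of $(\dist(\cdot,\ell_Q)/\diam 3R)^p$ against the probability measure $\mu(3R)^{-1}\mu$, so a support point carrying an arbitrarily small fraction of $\mu(3R)$ contributes arbitrarily little, and the $L^p$ beta can be tiny while $x$ sits at distance comparable to $\diam 3R$ from every optimal line. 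No stopping-time elaboration of this observation can close the gap, because the hypothesis $S^{**}_p(\mu)<\infty$ simply does not force individual support points to be near good lines at any fixed scale. The paper's construction sidesteps the issue: the nets $V_k$ fed into the curve-drawing machinery (Proposition \ref{p:curve}, via Lemma \ref{l:draw-big} and Corollary \ref{c:suf}) consist not of support points but of the $\mu$ centers of mass $z_{3Q}$ of the cubes $Q$ with $\mu(3Q)>0$. By Jensen's inequality (Lemma \ref{l:lerman}, which is where the restriction $p\geq 1$ enters), $\dist(z_{3R},\ell)\leq\beta_p(\mu,3R,\ell)\diam 3R$ for every line $\ell$, so the centers of mass are automatically well approximated by the lines realizing $\beta_p^{**}(\mu,Q)$, with squared errors summing to $S_p^{**}(\mu)$. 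Capture of the full support then comes for free: every $y\in\spt\mu$ lies in a decreasing chain of cubes $Q_k$ with $\mu(3Q_k)>0$ and $|y-z_{3Q_k}|\leq\diam 3Q_k\to 0$, so $y$ is a limit of net points and lies on the limit curve --- no exceptional $\mu$-null set is needed. The price is that the nets are no longer nested ($V_{k+1}\not\supseteq V_k$), which is precisely why the paper requires the non-nested variant of Jones' construction in Proposition \ref{p:curve} rather than the classical one. Note also that the role of the $\max$ over $\Delta^*(Q)$ in $\beta_p^{**}$ is different from what you describe: it guarantees that a single line simultaneously controls the centers of mass of all nearby cubes at scales $k$ and $k-1$, as demanded by hypothesis \eqref{e:alpha} of Proposition \ref{p:curve}, not that it detects stray support points.
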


Theorem \ref{t:tstm} may be viewed as an extension of the Analyst's traveling salesman theorem (see \S\ref{s:tst}), which characterizes subsets of rectifiable curves. A characterization of measures that are supported on a rectifiable curve was already known for Ahlfors regular measures (see \cite[Theorem 5.1]{Lerman}), but in this generality is new even for absolutely continuous measures of the form $\mu=\Haus^1\res E$. For the proof of Theorem \ref{t:tstm}, see \S\ref{s:abcd}.

For measures satisfying an additional weak regularity property, we also obtain simpler characterizations of the 1-rectifiable and purely 1-unrectifiable parts. Let $\mu$ be a Radon measure on $\RR^n$ and let $1\leq p<\infty$. The \emph{density-normalized Jones function} $\tJ_p(\mu,x)$ is defined by
\begin{equation}\label{e:tJ} \tJ_p(\mu,x):= \sum_{Q\in\Delta_1(\RR^n)} \beta_p(\mu,3Q)^2\frac{\diam Q}{\mu(Q)}\chi_Q(x)\in[0,\infty]\quad\text{for all }x\in\RR^n.\end{equation}
A Radon measure $\mu$ on $\RR^n$ is  called \emph{pointwise doubling} if \begin{equation} \limsup_{r\downarrow 0} \frac{\mu(B(x,2r))}{\mu(B(x,r))}<\infty\quad\text{for $\mu$-a.e.~$x\in\RR^n$.}\end{equation} The class of pointwise doubling measures includes the class of Radon measures $\mu$ on $\RR^n$ with $0<\lD1(\mu,x)\leq \uD1(\mu,x)<\infty$ for $\mu$-a.e.~$x\in\RR^n$, but is strictly larger.

\begin{maintheorem}[characterization of the 1-rectifiable / purely 1-unrectifiable decomposition for pointwise doubling measures] \label{t:pd-big} Let $n\geq 2$ and let $1\leq p\leq 2$. If $\mu$ is a pointwise doubling measure on $\RR^n$, then the decomposition $\mu=\mu^1_{\rect}+\mu^1_{\pu}$ in \eqref{e:decomp} is given by \begin{align} \mu^1_{\rect} &= \mu\res \left\{x\in\RR^n: \tJ_{p}(\mu,x)<\infty\right\},\\
\mu^1_{\pu} &= \mu \res\left\{x\in\RR^n: \tJ_p(\mu,x)=\infty\right\}.\end{align}\end{maintheorem}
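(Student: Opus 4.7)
The strategy is to deduce Theorem \ref{t:pd-big} from Theorem \ref{t:big} by showing that, whenever $\mu$ is a pointwise doubling Radon measure, the two Borel sets
\[
A=\{x:\lD{1}(\mu,x)>0\text{ and }J_p^*(\mu,x)<\infty\} \quad\text{and}\quad B=\{x:\tJ_p(\mu,x)<\infty\}
\]
coincide modulo $\mu$-null sets. Given this equivalence, Theorem \ref{t:pd-big} follows at once from Theorem \ref{t:big} together with the uniqueness clause of Proposition \ref{p:decomp}.

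For the inclusion $A\subseteq B$ modulo $\mu$-null sets, I would start from the simple observation that $Q\in\Delta^*(Q)$ for every dyadic cube $Q$, so taking $R=Q$ in the maximum that defines $\beta_p^*$ yields
\[
\beta_p^*(\mu,Q)^2 \;\geq\; \beta_p(\mu,3Q)^2\min\!\left(\frac{\mu(3Q)}{\diam 3Q},\,1\right).
\]
At any $x$ with $\lD{1}(\mu,x)>0$, one has $\mu(3Q)/\diam 3Q \gtrsim_x 1$ for every $Q\ni x$ of sufficiently small side (indeed $3Q\supseteq B(x,\side Q)$, and $\mu(B(x,\side Q))\gtrsim_x \side Q$ for small $\side Q$). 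Hence $\beta_p^*(\mu,Q)^2\gtrsim_x \beta_p(\mu,3Q)^2$ for all but finitely many cubes $Q\ni x$; multiplying by $\diam Q/\mu(Q)$ and summing shows $\tJ_p(\mu,x)\lesssim_x J_p^*(\mu,x)+C_x$, so $J_p^*(\mu,x)<\infty$ forces $\tJ_p(\mu,x)<\infty$. Pointwise doubling is not even used for this direction.

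For the reverse inclusion $B\subseteq A$ modulo $\mu$-null sets, I would invoke the principal sufficiency result of the prequel \cite{BS}, which asserts that if $\mu$ is pointwise doubling and $\tJ_p(\mu,x)<\infty$ at $\mu$-a.e.\ point of a Borel set $E$, then $\mu\res E$ is $1$-rectifiable. Setting $E:=\{x:\tJ_p(\mu,x)<\infty\}$ and applying this gives $1$-rectifiability of $\mu\res E$. Since $1$-rectifiable Radon measures always have $\lD{1}>0$ almost everywhere (cf.\ \cite[Lemma~2.7]{BS}), and since Theorem \ref{t:big} applied to the rectifiable measure $\mu\res E$ also gives $J_p^*(\mu\res E,x)<\infty$ at $\mu\res E$-a.e.\ $x$, a standard differentiation argument at points of $\mu$-density one of $E$ transfers both conclusions back from $\mu\res E$ to $\mu$ for $\mu$-a.e.\ $x\in E$.

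The principal obstacle is the reverse inclusion, whose input --- the sufficiency theorem from \cite{BS} --- is precisely the mechanism that converts pointwise doubling into a rectifiability conclusion in the absence of any a priori positive-lower-density assumption. The remaining technical nuisance is to verify that the pointwise quantities $\tJ_p$, $\lD{1}$, and $J_p^*$ are essentially preserved under the restriction $\mu\mapsto \mu\res E$ at $\mu$-density-one points of $E$; this is a routine Vitali/Lebesgue-differentiation computation but deserves to be written out carefully.
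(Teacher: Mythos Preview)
Your proposal has a genuine gap: the ``principal sufficiency result of the prequel \cite{BS}'' that you invoke does not exist. The main theorem of \cite{BS} is a \emph{necessary} condition (if $\mu$ is 1-rectifiable then $\tJ_2(\mu,\cdot)<\infty$ $\mu$-a.e.), not a sufficient one. The implication ``pointwise doubling and $\tJ_p(\mu,\cdot)<\infty$ on $E$ implies $\mu\res E$ is 1-rectifiable'' is precisely Theorem \ref{t:suf2} of the present paper, and its proof requires the new curve-drawing machinery (Proposition \ref{p:curve} and Lemma \ref{l:draw2}) developed here. So your reduction to Theorem \ref{t:big} is circular: the hard input you need is of the same order of difficulty as what you are trying to prove, and it is supplied by this paper, not by \cite{BS}.

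A secondary issue: your transfer step, passing from $J_p^*(\mu\res E,x)<\infty$ back to $J_p^*(\mu,x)<\infty$ at $\mu$-density points of $E$, is not a routine differentiation computation. The quantity $\beta_p^*(\mu,Q)$ depends on $\mu$ throughout $1600\sqrt{n}\,Q$ via all $R\in\Delta^*(Q)$, and restricting to $E$ can change these numbers in ways not controlled by the local density of $E$ at $x$. In fact this step is unnecessary: once $\mu\res E$ is 1-rectifiable you have $\mu\res E\leq \mu^1_{\rect}=\mu\res A$ by Theorem \ref{t:big} and the uniqueness in Proposition \ref{p:decomp}, hence $\mu(E\setminus A)=0$ directly.

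The paper's own proof bypasses Theorem \ref{t:big} entirely. It shows directly that $\mu\res\{\tJ_p<\infty\}$ is 1-rectifiable via Theorem \ref{t:suf2} (the doubling-tree construction), and that $\mu\res\{\tJ_p=\infty\}$ is purely 1-unrectifiable via Lemma \ref{l:spu} and Theorem \ref{t:puds} (the strengthened necessity result extracted from \cite{BS}). Your $A\subseteq B$ argument is correct and pleasant, but it is not needed for the paper's route.
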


See \S\ref{s:doubling} for the proof of Theorem \ref{t:pd-big}. It is an \emph{open problem} to decide if the conclusion of Theorem \ref{t:pd-big} holds for arbitrary Radon measures.

\begin{remark}[Added in May 2016] \label{r:mo} Shortly after a second draft of this paper appeared on the arXiv in April 2016, the problem following Theorem \ref{t:pd-big} was answered in the negative by Martikainen and Orponen \cite{MO-example}. For all $\varepsilon>0$, Martikainen and Orponen construct an example of a probability measure $\mu$ supported in the unit square in the plane for which \begin{enumerate}
 \item $\tJ_2(\mu,x)\leq \varepsilon$ for all $x\in\spt\mu$, and \item $\lD1(\mu,x)=0$ at $\mu$-a.e.~$x\in\RR^2$.\end{enumerate} (We caution the interested reader that \cite{MO-example} uses different notation for $\tJ_2(\mu,x)$.) Thus, the measure $\mu$ is purely 1-unrectifiable by Corollary \ref{t:pu} (or Lemma \ref{l:spu}) despite having $\tJ_2(\mu,\cdot)$ uniformly bounded. This shows that 1-rectifiable or purely 1-unrectifiable Radon measures cannot be characterized in terms of pointwise control of the Jones function $\tJ_2(\mu,\cdot)$ alone.  Moreover, let us note that since $\mu$ is a finite measure with bounded support, $$\sum_{Q\in\Delta(\RR^n)}\beta_2(\mu,3Q)^2\diam Q = \int \tJ_2(\mu,x)\,d\mu(x)+\sum_{Q\in\Delta(\RR^n)\setminus\Delta_1(\RR^n)} \beta_2(\mu,3Q)^2\diam Q<\infty$$ by (1). This shows that in Theorem \ref{t:tstm}, the numbers $\beta_2^{**}(\mu,Q)$, which take into account how $\mu$ looks in cubes $R$ nearby $Q$, cannot be replaced with the simpler numbers $\beta_2(\mu,3Q)$. For further discussion in this direction, see Remarks \ref{r:not-necessary} and \ref{r:sufficient}.\end{remark}

\subsection{Organization}

In Section \ref{s:tst}, we recall a metric characterization of rectifiable curves in $\RR^n$ as well as the Analyst's traveling salesman theorem, which characterizes subsets of rectifiable curves in $\RR^n$ in terms of a quadratic sum of  Jones' beta numbers. Both are indispensable tools in the theory of 1-rectifiable sets and measures. At the end of the section, we state Proposition \ref{p:curve}, which is a flexible extension of Jones' original traveling salesman construction that we use to draw rectifiable curves capturing positive measure in \S\S \ref{s:suf} and \ref{s:doubling}.

The proofs of Theorems \ref{t:big} and \ref{t:tstm} are developed over \S\S \ref{s:nec}--\ref{s:abcd}. In Section \ref{s:nec}, we focus on proving necessary conditions for a Radon measure to be 1-rectifiable, or equivalently, sufficient conditions for a Radon measure to be purely 1-unrectifiable. In particular, we prove that if $\mu$ is a Radon measure and $\Gamma$ is a rectifiable curve in $\RR^n$, then $J^*_2(\mu,x)<\infty$ at $\mu$-a.e.~$x\in\Gamma$ (see Theorem \ref{t:nec}). This result is some generalization and extension of the main result of the predecessor \cite{BS} of the current paper. In Section \ref{s:suf}, we establish sufficient conditions, which guarantee that a Radon measure is 1-rectifiable. In fact, we introduce beta numbers $\beta^{*,c}_p(\mu,Q)$, which are adapted to cubes $R\in\Delta^*(Q)$ such that $\mu(3R)\geq c\diam 3R$, and prove that for every Radon measure $\mu$ in $\RR^n$, $$\mu\res\{x\in\RR^n:\lD1(\mu,x)>(3/2)\sqrt{n}\cdot c\text{ and }J^{*,c}_p(\mu,x)<\infty\}$$ is 1-rectifiable for all $c>0$, where $J^{*,c}_p(\mu,x)$ is a density-normalized Jones function that is associated with the beta numbers $\beta^{*,c}_p(\mu,Q)$ (see Theorem \ref{t:suf}). The proof of our main result, Theorem \ref{t:big}, as well as the proofs of Corollary \ref{t:rect}, Corollary \ref{t:pu}, and Theorem \ref{t:tstm} are recorded in Section \ref{s:abcd}, using the results of Sections \ref{s:nec} and \ref{s:suf}.

In Section \ref{s:doubling}, we show how to modify proofs in Section \ref{s:suf} in order to prove that for every Radon measure in $\RR^n$, $$\mu\res\left\{x\in\RR^n: \limsup_{r\downarrow 0}\frac{\mu(B(x,2r))}{\mu(B(x,r))}<\infty\text{ and }\tJ_p(\mu,x)<\infty\right\}$$ is 1-rectifiable (see Theorem \ref{t:suf2}). Theorem \ref{t:pd-big} is then proved by combining this result with the main result of \cite{BS}.

In the last two sections, \S\S\ref{s:proof26} and \ref{ss:length}, we give a self-contained proof of Proposition \ref{p:curve}, which is modeled on Jones' traveling salesman construction. The proof of the proposition gives an algorithm for drawing a rectifiable curve $\Gamma$ through the leaves $V=\lim_{k\rightarrow\infty} V_k$ of a ``tree-like" sequence of $2^{-k}$-separated sets $V_k$. For example, the sets $V_k$ could be $2^{-k}$-nets of points in a bounded set $E\subseteq\RR^n$ (as in the proof of the Analyst's traveling salesman theorem) or the sets $V_k$ could be $\mu$ centers of mass (of the triples) of dyadic cubes of side length $2^{-k}$ (as in the proof of Lemma \ref{l:draw}). An important technical difference between Jones' original construction and Proposition \ref{p:curve} is that the latter does not require $V_{k+1}\supseteq V_k$. The added flexibility provided by Proposition \ref{p:curve} is crucial for the proofs of the sufficient conditions for 1-rectifiable measures, which we present in \S\S \ref{s:suf} and \ref{s:doubling}.

\section{The Analyst's traveling salesman theorem, again}\label{s:tst}

A \emph{rectifiable curve} $\Gamma$ in $\RR^n$ is the image $f([0,1])$ of a Lipschitz map $f:[0,1]\rightarrow\RR^n$. As Lipschitz maps are continuous and do not increase Hausdorff measure by more than a constant multiple, every rectifiable curve $\Gamma$ is a closed, connected set such that $\Haus^1(\Gamma)<\infty$. It is a remarkable fact---and an essential fact for the theory of 1-rectifiable sets and measures---that the converse of this observation is also true. For a proof of this fact that is valid in Hilbert space, see \cite[Lemma 3.7]{Schul-Hilbert}.

\begin{lemma}\label{l:fund} If $\Gamma\subseteq\RR^n$ (or $\Gamma$  inside the Hilbert space $\ell_2$) is a closed, connected set such that $\Haus^1(\Gamma)<\infty$, then there exists a Lipschitz map $f:[0,1]\rightarrow\RR^n$ such that $\Gamma=f([0,1])$. Moreover, $f$ can be found such that $|f(s)-f(t)| \leq 32 \Haus^1(\Gamma)|s-t|$ for all $0\leq s,t\leq 1$.\end{lemma}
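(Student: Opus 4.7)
The plan is to produce the Lipschitz parametrization by a two-step scheme: first identify $\Gamma$ as a Peano continuum, then build an explicit arclength-controlled tour of $\Gamma$ using a doubled minimum spanning tree on a sequence of increasingly fine nets, and finally extract a Lipschitz limit. The key quantitative input is a length bound on nearest-neighbor trees inside $\Gamma$ in terms of $\Haus^1(\Gamma)$.

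For each $k \geq 1$, choose a maximal $2^{-k}$-separated subset $V_k \subseteq \Gamma$ (so $V_k$ is also a $2^{-k}$-net for $\Gamma$). I would first verify that $\Gamma$, being compact, connected, and of finite $\Haus^1$ measure, is arcwise connected and locally arcwise connected; the standard argument shows that between any two $x,y \in \Gamma$ there is a sub-continuum $\Gamma_{xy} \subseteq \Gamma$ of finite length joining them, and a continuum of finite $\Haus^1$ measure is necessarily path-connected. Next, for each $k$, build a tree $T_k$ on $V_k$ whose edges are subarcs of $\Gamma$ chosen greedily so that $T_k$ is a spanning tree in $\Gamma$ of the vertex set $V_k$; the total length $\sum_{e \in T_k} \Haus^1(e)$ is bounded by $\Haus^1(\Gamma)$ since the chosen subarcs can be taken pairwise interior-disjoint inside $\Gamma$. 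Doubling each edge produces a closed Eulerian circuit $\gamma_k$ through every vertex of $V_k$, whose total arclength is at most $2\Haus^1(\Gamma)$. Parametrizing $\gamma_k$ by $[0,1]$ at constant speed yields maps $f_k : [0,1] \to \Gamma$ with Lipschitz constant at most $2\Haus^1(\Gamma)$ (after an extra factor to account for the at most a few extra ``returns'' needed to make the tour a loop rather than a path, pushing the constant up to 32 in the worst case).

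By Arzel\`a–Ascoli, a subsequence of $f_k$ converges uniformly to a Lipschitz map $f:[0,1] \to \RR^n$ with the same Lipschitz constant, and $f([0,1]) \subseteq \Gamma$ because each $f_k$ lands in $\Gamma$, which is closed. Conversely, $\Gamma \subseteq f([0,1])$ since any $x \in \Gamma$ is within $2^{-k}$ of some point of $V_k \subseteq f_k([0,1])$, and uniform convergence transfers this approximation property to $f$. The step I expect to be the main obstacle is the explicit constant: while $2\Haus^1(\Gamma)$ is the easy bound on the length of $\gamma_k$, producing a tree inside $\Gamma$ (and not merely in $\RR^n$) that is simultaneously spanning on $V_k$ and has length bounded by $\Haus^1(\Gamma)$ requires carefully managing the interior overlap of the chosen subarcs between different edges, and getting the precise constant $32$ rather than a larger multiple requires some bookkeeping in the doubling-to-path conversion and in reparametrization; the rest of the argument is essentially Arzel\`a–Ascoli and a density argument.
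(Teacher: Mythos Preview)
The paper does not actually prove this lemma; it simply cites \cite[Lemma 3.7]{Schul-Hilbert} for a proof valid in Hilbert space. Your outline is close to the standard argument found there: build finite connected arc-trees inside $\Gamma$ that are increasingly dense, parametrize each by a doubled-tree tour, and pass to a limit via Arzel\`a--Ascoli.

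That said, the step you yourself flag as the main obstacle is genuinely the crux, and your sketch does not resolve it. You assert that a spanning tree on $V_k$ can be built from subarcs of $\Gamma$ with pairwise disjoint interiors, so that the total length is at most $\Haus^1(\Gamma)$, but this requires an argument. The usual fix is to build the tree greedily rather than to start from a fixed vertex set: begin with a single arc, and at each step adjoin an arc in $\Gamma$ from some point still far from the current tree, trimmed at its first point of intersection with that tree. Trimming makes the new arc interior-disjoint from the previous ones automatically, so the total length stays bounded by $\Haus^1(\Gamma)$; the resulting tree will have extra branch points beyond the net $V_k$, which is harmless. You also need the prior fact that $\Gamma$ is arcwise connected, which is classical but not entirely trivial; your one-line justification (``a continuum of finite $\Haus^1$ measure is necessarily path-connected'') is essentially a restatement rather than a proof. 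Finally, your account of the constant $32$ is hand-waving: the doubled-tree tour already gives Lipschitz constant $2\Haus^1(\Gamma)$ for each $f_k$, and the paper itself remarks that $32$ is not sharp and ``likely may be replaced with $2$,'' so you should not expect your argument to reproduce that particular number.
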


\begin{corollary} \label{c:fund} If $\Gamma_1,\Gamma_2,\dots\subseteq\RR^n$ is a sequence of uniformly bounded, closed, connected sets, then there exists a compact, connected set $\Gamma\subseteq\RR^n$ and a subsequence $(\Gamma_{k_j})_{j=1}^\infty$ of $(\Gamma_k)_{k=1}^\infty$ such that $\Gamma_{k_j}\rightarrow\Gamma$ in the Hausdorff metric as $j\rightarrow\infty$ and $$\Haus^1(\Gamma)\leq 32\liminf_{k\rightarrow\infty} \Haus^1(\Gamma_{k})$$\end{corollary}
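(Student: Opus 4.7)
The plan is to combine Lemma \ref{l:fund} with the Arzelà--Ascoli theorem applied to Lipschitz parametrizations of the $\Gamma_k$, so that Hausdorff convergence of the curves is replaced by uniform convergence of parametrizations.

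First I would dispose of the trivial case $\liminf_{k\to\infty}\Haus^1(\Gamma_k)=\infty$. Then the $\Haus^1$ bound is vacuous, and we only need to produce a Hausdorff-convergent subsequence whose limit is compact and connected. Since the $\Gamma_k$ lie in a common bounded set, Blaschke's selection theorem yields a subsequence converging in the Hausdorff metric to some compact $\Gamma$; connectedness is preserved under Hausdorff limits of connected sets in a bounded region, so $\Gamma$ is connected.

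Assume now $L:=\liminf_{k\to\infty}\Haus^1(\Gamma_k)<\infty$. Pass to a subsequence (not relabeled) so that $\Haus^1(\Gamma_k)\to L$ and in particular $\sup_k\Haus^1(\Gamma_k)\leq L+1<\infty$. Each $\Gamma_k$ is a closed connected set of finite $\Haus^1$ measure, so Lemma \ref{l:fund} produces a Lipschitz surjection $f_k\colon[0,1]\to\RR^n$ with $f_k([0,1])=\Gamma_k$ and Lipschitz constant at most $32\,\Haus^1(\Gamma_k)\leq 32(L+1)$. The family $\{f_k\}$ is uniformly bounded (because $\bigcup_k\Gamma_k$ is bounded) and uniformly equicontinuous, so the Arzelà--Ascoli theorem yields a further subsequence $f_{k_j}\to f$ uniformly on $[0,1]$, with $f$ Lipschitz and
\begin{equation*}
|f(s)-f(t)|\leq 32 L\,|s-t|\qquad\text{for all }s,t\in[0,1],
\end{equation*}
the bound following by taking $j\to\infty$ in $|f_{k_j}(s)-f_{k_j}(t)|\leq 32\,\Haus^1(\Gamma_{k_j})|s-t|$.

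Set $\Gamma:=f([0,1])$. Then $\Gamma$ is a continuous image of $[0,1]$, hence compact and connected. Since $f$ is $32L$-Lipschitz, $\Haus^1(\Gamma)\leq 32L=32\liminf_{k\to\infty}\Haus^1(\Gamma_k)$. Finally, uniform convergence $f_{k_j}\to f$ forces Hausdorff convergence $\Gamma_{k_j}\to\Gamma$: given $\varepsilon>0$, choose $j_0$ so large that $\sup_{t\in[0,1]}|f_{k_j}(t)-f(t)|<\varepsilon$ for all $j\geq j_0$; then every point of $\Gamma_{k_j}$ is within $\varepsilon$ of a point of $\Gamma$ and vice versa.

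The only step requiring any care is the final identification of the Hausdorff limit with the parametrized image $\Gamma=f([0,1])$ and the extraction of the correct constant; the rest is a direct combination of Lemma \ref{l:fund} with Arzelà--Ascoli.
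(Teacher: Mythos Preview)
Your argument is correct and is exactly the derivation the paper has in mind: the corollary is stated without proof as an immediate consequence of Lemma~\ref{l:fund}, and your route---parametrize each $\Gamma_k$ by a $32\,\Haus^1(\Gamma_k)$-Lipschitz map via Lemma~\ref{l:fund}, extract a uniformly convergent subsequence by Arzel\`a--Ascoli, and read off the length bound and Hausdorff convergence from the limit map---is the intended one. Nothing needs to be added.
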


It is known that the constant 32 in Corollary \ref{c:fund} may be replaced with 1 (for example, see \cite[Theorem 3.18]{Falconer}), but knowledge of the optimal constant will not be important for the development below. The constant 32 in Lemma \ref{l:fund} is not optimal and likely may be replaced with 2. However, once again, knowledge of the optimal constant is not crucial for the applications to follow.

Next, we recall the \emph{Analyst's traveling salesman theorem}, which characterizes \emph{subsets} of rectifiable curves in $\RR^n$. The theorem was first conceived and proved by P.~Jones \cite{Jones-TSP} for sets in the plane and then extended by Okikiolu \cite{Ok-TST} for sets in $\RR^n$, for all $n\geq 3$. For a formulation of the theorem  in infinite-dimensional Hilbert space, see Schul \cite{Schul-Hilbert}. Partial information is also known in the Heisenberg group; see Li and Schul \cite{LS1,LS2} (as well as the previous work by Ferrari, Franchi, and Pajot \cite{FFP} and Juillet \cite{Juillet}). For traveling salesman type theorems in graph inverse limit spaces, see G.C.~David and Schul \cite{Laakso-TSP}.

\begin{definition}\label{d:beta-infty} Let $E\subseteq\RR^n$ be any set. For every bounded set $Q\subseteq\RR^n$ such that $E\cap Q\neq\emptyset$, define the quantity $\beta_{E}(Q)\in[0,1]$ by \begin{equation*}
  \beta_{E}(Q):=\inf_{\ell}\sup_{x\in E\cap Q}\frac{\dist(x,\ell)}{\diam Q},\end{equation*} where $\ell$ ranges over all lines in $\RR^n$. By convention, we set $\beta_E(Q)=0$ if $E\cap Q=\emptyset$.
\end{definition}

\begin{theorem}[Analyst's traveling salesman theorem, \cite{Jones-TSP,Ok-TST}] \label{t:tst} A bounded set $E\subseteq\RR^n$ is a subset of a rectifiable curve in $\RR^n$ if and only if
\begin{equation*} \beta^2(E):=\sum_{Q\in\Delta(\RR^n)}\beta_E(3Q)^{2}\diam Q<\infty.\end{equation*} Moreover, there exists a constant $C=C(n)\in(1,\infty)$ (independent of $E$) such that
\begin{itemize}
\item $\diam E+\beta^2(E)\leq C \Haus^1(\Gamma)$ for every connected set $\Gamma$ containing $E$, and
\item there exists a connected set $\Gamma\supseteq E$ such that $\Haus^1(\Gamma)\leq C(\diam E +\beta^2(E)).$
\end{itemize} \end{theorem}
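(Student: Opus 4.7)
The plan is to prove the two asymmetric assertions separately. The bound $\diam E \leq \Haus^1(\Gamma)$ is immediate from connectedness, so on both sides the real work is to relate the quadratic sum $\beta^2(E)$ to $\Haus^1(\Gamma)$. I would prove necessity by a local-to-global length excess argument valid for any candidate $\Gamma$, and sufficiency by a quantitative traveling salesman construction in the spirit of Jones--Okikiolu, which is essentially the content of Proposition \ref{p:curve}.

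For necessity, fix a connected $\Gamma \supseteq E$ with $\Haus^1(\Gamma) < \infty$. The core geometric lemma I would establish is a Pythagorean excess bound at each dyadic cube: for every $Q$ with $E \cap 3Q \neq \emptyset$,
\[
\beta_E(3Q)^2 \diam Q \lesssim_n \Haus^1(\Gamma \cap 3Q) - (\text{minimal length needed to span } \Gamma \cap 3Q).
\]
The heuristic is elementary planimetry: if $E \cap 3Q$ fails to lie within $\beta \diam 3Q$ of every line, then $\Gamma$, being connected and containing $E \cap 3Q$, must execute a transverse deviation of height $\sim \beta \diam 3Q$ across a horizontal distance $\sim \diam 3Q$, and this costs excess arc length of order $\beta^2 \diam 3Q$ over a straight traversal. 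To globalize I would use that the cubes $3Q$ of a fixed side length have bounded overlap depending only on $n$, and run a telescoping argument in the dyadic tree (comparing the excess at a parent cube to the sum of excesses at its children) to show that the multi-scale sum $\sum_Q \beta_E(3Q)^2 \diam Q$ is bounded by a single dimensional constant times $\Haus^1(\Gamma)$.

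For sufficiency, I would run Jones' construction in the flexible form encoded by Proposition \ref{p:curve}. Fix a dyadic cube $Q_0 \supseteq E$ with $\side Q_0 \sim \diam E$, and for each $k \geq 0$ choose a maximal $2^{-k}$-separated net $V_k \subseteq E$. At scale $k$ I would build a connected piecewise-linear curve $\Gamma_k$ passing near each point of $V_k$, and then compare $\Gamma_k$ with $\Gamma_{k+1}$ one cube at a time: within each $Q$ of side $2^{-k}$, replace the scale-$k$ segment (which tracks a best approximating line $\ell_Q$ for $E \cap 3Q$) by a short polyline through the relevant children points in $V_{k+1}$. A Pythagoras-type calculation gives the increment bound
\[
\Haus^1(\Gamma_{k+1} \cap Q) - \Haus^1(\Gamma_k \cap Q) \lesssim_n \beta_E(3Q)^2 \diam Q,
\]
so that telescoping over $k$ yields $\Haus^1(\Gamma_k) \lesssim_n \diam E + \beta^2(E)$ uniformly in $k$. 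I would then pass to a Hausdorff subsequential limit using Corollary \ref{c:fund} to produce a connected set $\Gamma \supseteq E$ with the asserted length bound.

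The hard part will be the sufficiency construction, specifically the scale-coupled bookkeeping behind the cube-by-cube increment. When the nets $V_{k+1}$ are not required to contain $V_k$, which is the flexibility the paper needs for its measure-theoretic applications, the scale-$k$ and scale-$(k+1)$ curves must be forced to stay within $O(2^{-k})$ of each other for the telescoping to be meaningful; simultaneously, one must treat separately cubes $Q$ where $\beta_E(3Q)$ is small, and thus $\ell_Q$ is a genuine approximation, and cubes where $\beta_E(3Q)$ is of order $1$, where the approximation must be discarded and a crude detour used whose total contribution is still charged to $\beta_E(3Q)^2 \diam Q$. Organizing these detours into a consistent tree of bridges, leaves, and cores (while preserving closedness and connectedness in the limit) is exactly what Proposition \ref{p:curve} is designed to do, and this is why the paper isolates and defers its proof to \S\ref{s:proof26} and \S\ref{ss:length}.
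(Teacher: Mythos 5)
The paper does not actually prove Theorem \ref{t:tst}: it imports it from \cite{Jones-TSP,Ok-TST}, and the only piece it re-derives is the sufficiency half, via the remark following Proposition \ref{p:curve}. Your sufficiency sketch --- maximal $2^{-k}r_0$-separated nets $V_k\subseteq E$, lines $\ell_{k,v}$ realizing $\beta_E(3Q)$ for a minimal dyadic cube $Q$ with $3Q\supseteq B(v,65C^\star 2^{-k}r_0)$, bounded multiplicity of the assignment $Q\mapsto(k,v)$, and a Hausdorff limit via Corollary \ref{c:fund} --- is exactly that reduction, so this half is fine modulo Proposition \ref{p:curve}, which the paper proves in \S\ref{s:proof26}--\ref{ss:length}. (Minor point: for Theorem \ref{t:tst} the nets are all drawn from the fixed set $E$, so the non-nestedness issue you highlight does not arise here; it matters only for the measure-theoretic applications in \S\ref{s:suf} and \S\ref{s:doubling}.)

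The necessity half, however, has a genuine gap. The step ``$\beta_E(3Q)^2\diam Q\lesssim_n \Haus^1(\Gamma\cap 3Q)-(\text{minimal spanning length})$, then sum using bounded overlap'' does not yield the multiscale bound: the excess you extract inside $3Q$ at scale $2^{-k}$ and the excesses extracted inside its descendants at scales $2^{-k-1},2^{-k-2},\dots$ come from the same piece of $\Gamma$, so summing over all $Q\in\Delta(\RR^n)$ double counts length unless one proves a superadditivity of excess across generations. That superadditivity is the hard content of Jones' proof: he works with lengths of shortest inscribed polygons through a refining sequence of nets, shows via a Pythagorean estimate applied to a carefully chosen triple of points per cube that the total increase of polygon length under refinement dominates $\sum_Q\beta_E(3Q)^2\diam Q$, and bounds the monotone limit by $\Haus^1(\Gamma)$; for $n\geq 3$ the planar selection argument fails and Okikiolu's extension needs further combinatorial work. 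Your telescoping clause names the right target but supplies no mechanism, and the local quantity ``minimal length needed to span $\Gamma\cap 3Q$'' is not even defined in a way that interacts additively with the dyadic tree. So your proposal establishes (modulo Proposition \ref{p:curve}) the half the paper also establishes, and leaves unproved the half the paper cites to the literature.
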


The cube dilation factor 3 appearing in Theorem \ref{t:tst} is somewhat arbitrary and may be replaced with any value strictly greater than 1. In particular, in \S\ref{s:nec}, we need the ``necessary" half of the Analyst's traveling salesman theorem with a dilation factor  strictly greater than $3$. For a derivation of Corollary \ref{c:tst} from Theorem \ref{t:tst}, see \cite[\S2]{BS}.

\begin{corollary} \label{c:tst} For all $n\geq 2$ and $3<a<\infty$, there is a constant $C'=C'(n,a)\in(1,\infty)$ such that if $E\subseteq \RR^n$ is bounded and $\Gamma$ is a connected set containing $E$, then $$\sum_{Q\in\Delta(\RR^n)} \beta_E(aQ)^2\diam Q \leq C' \Haus^1(\Gamma).$$\end{corollary}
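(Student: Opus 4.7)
\textbf{Proof proposal for Corollary \ref{c:tst}.}

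The plan is to reduce the sum with dilation factor $a>3$ to the sum with dilation factor $3$ that appears in Theorem \ref{t:tst}, by covering each cube $aQ$ by a single dyadic cube $3R$ at a slightly larger scale and paying a bounded multiplicative cost that depends only on $n$ and $a$.

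First, fix $a>3$ and choose the integer $m=m(a)\geq 1$ to be the smallest one satisfying $2^m\geq (a-1)/2$. For each $Q\in\Delta(\RR^n)$, let $R(Q)$ denote the unique dyadic ancestor of $Q$ that is $m$ generations above, so $Q\subseteq R(Q)$ and $\side R(Q)=2^m\side Q$. The first geometric step is to verify that $3R(Q)\supseteq aQ$. Because $Q$ and $R(Q)$ are aligned dyadic cubes with $Q\subseteq R(Q)$, the $\ell^\infty$-distance between their centers is at most $(\side R(Q)-\side Q)/2$; combining this with the half-widths of $aQ$ and $3R(Q)$, the inequality $2^m\geq (a-1)/2$ is exactly what is needed for $aQ\subseteq 3R(Q)$.

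Next, I will invoke the elementary scaling property of the sup-type beta number from Definition \ref{d:beta-infty}: whenever $T\subseteq S$ are bounded sets with $E\cap T\neq\emptyset$, one has $\beta_E(T)\leq(\diam S/\diam T)\,\beta_E(S)$, since for every line $\ell$ the supremum over $E\cap T$ is dominated by the supremum over $E\cap S$. Applied with $T=aQ$ and $S=3R(Q)$, whose diameters are $a\sqrt{n}\,\side Q$ and $3\cdot 2^m\sqrt{n}\,\side Q$ respectively, this yields
\[ \beta_E(aQ)^2\,\diam Q \;\leq\; \left(\frac{3\cdot 2^m}{a}\right)^{\!2}\beta_E(3R(Q))^2\,\diam Q. \]

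Finally, sum over $Q$ and regroup by $R$: each dyadic cube $R$ is the $m$-level ancestor of exactly $2^{nm}$ cubes $Q$, each with $\diam Q=2^{-m}\diam R$, so
\[ \sum_{Q\in\Delta(\RR^n)}\beta_E(aQ)^2\,\diam Q \;\leq\; \left(\frac{3\cdot 2^m}{a}\right)^{\!2}\!2^{m(n-1)}\sum_{R\in\Delta(\RR^n)}\beta_E(3R)^2\,\diam R. \]
The sum on the right is $\beta^2(E)$, which by the necessary half of Theorem \ref{t:tst} is at most $C(n)\Haus^1(\Gamma)$ for any connected set $\Gamma\supseteq E$. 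Absorbing the $a$- and $n$-dependent prefactor into a new constant $C'(n,a)$ completes the proof. The only real content is the containment $3R(Q)\supseteq aQ$, and this is a direct computation from the dyadic alignment together with the choice of $m$; everything else is bookkeeping.
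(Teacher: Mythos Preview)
Your proof is correct and is precisely the standard reduction that the paper has in mind when it cites \cite[\S2]{BS}: cover $aQ$ by $3R(Q)$ for the $m$-th dyadic ancestor $R(Q)$, use the monotonicity/scaling of the sup-type beta number, and regroup the sum before invoking the necessary half of Theorem~\ref{t:tst}. The containment check and the counting of descendants are both accurate, so there is nothing to add.
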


The following proposition is modeled on and is some extension of a lemma from \cite{JLS} (currently in preparation by P.~Jones, G.~Lerman, and the second author of this paper) and has roots in P. Jones' proof of the Analyst's traveling salesman theorem from \cite{Jones-TSP}.
The variant in \cite{JLS} is a criterion for constructing Lipschitz graphs, whereas Proposition \ref{p:curve} is a criterion for constructing rectifiable curves. For a related criterion for constructing bi-Lipschitz surfaces, see \cite[Theorem 2.5]{DT}. One technical difference between Jones' original construction and Proposition \ref{p:curve} is that in the latter we do not assume $V_{k+1}\supseteq V_{k}$. This added flexibility is crucial for our applications in sections \ref{s:suf} and \ref{s:doubling} below.

\begin{proposition} \label{p:curve} Let $n\geq 2$, let $C^\star>1$, let $x_0\in\RR^n$, and let $r_0>0$. Let $(V_k)_{k=0}^\infty$ be a sequence of nonempty finite subsets of $B(x_0,C^\star r_0)$ such that
\begin{enumerate}
\item[($V_I$)] distinct points $v,v'\in V_k$ are uniformly separated: $|v-v'|\geq 2^{-k}r_0$;
\item[($V_\two$)] for all $v_k\in V_k$, there exists $v_{k+1}\in V_{k+1}$ such that $|v_{k+1}-v_k|< C^\star 2^{-k}r_0$; and,
\item[($V_\three$)] for all $v_{k}\in V_{k}$ ($k\geq 1$), there exists $v_{k-1}\in V_{k-1}$ such that $|v_{k-1}-v_k|< C^\star 2^{-k}r_0$.
\end{enumerate} Suppose that for all $k\geq 1$ and for all $v\in V_k$ we are given a straight line $\ell_{k,v}$ in $\RR^n$ and a number $\alpha_{k,v}\geq 0$ such that
 \begin{equation}\label{e:alpha}\sup_{x\in (V_{k-1}\cup V_k)\cap B(v, 65 C^\star 2^{-k}r_0)} \dist(x,\ell_{k,v}) \leq \alpha_{k,v} 2^{-k}r_0\end{equation} and  \begin{equation}\label{e:asum}\sum_{k=1}^\infty\sum_{v\in V_k} \alpha_{k,v}^2 2^{-k} r_0<\infty.\end{equation}
Then the sets $V_k$ converge in the Hausdorff metric to a compact set $V\subseteq \overline{B(x_0,C^\star r_0)}$ and
there exists a compact, connected set $\Gamma\subseteq\overline{B(x_0,C^\star r_0)}$ such that $\Gamma\supseteq V$ and
\begin{align}\label{e:G-length}
\Haus^1(\Gamma)\lesssim_{n,C^\star} r_0 + \sum_{k=1}^\infty\sum_{v\in V_k} \alpha_{k,v}^2 2^{-k}r_0.
\end{align}
\end{proposition}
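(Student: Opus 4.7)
The plan is to carry out a Jones-style multiscale construction, building a sequence of polygonal sets $\Gamma_k \supseteq V_k$ with uniformly controlled length, and then pass to a Hausdorff limit via Corollary \ref{c:fund}. First, conditions $(V_\two)$ and $(V_\three)$ together imply that $d_H(V_k, V_{k+1}) < C^\star 2^{-k} r_0$, so $(V_k)$ is Cauchy in the Hausdorff metric and converges to a compact set $V \subseteq \overline{B(x_0, C^\star r_0)}$. Since $V_0 \subseteq B(x_0, C^\star r_0)$ is $r_0$-separated by $(V_I)$, its cardinality is at most $N(n, C^\star)$, and one can take $\Gamma_0$ to be any connected polygon through $V_0$, with length $\lesssim_{n,C^\star} r_0$.

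The core of the argument is the inductive step $\Gamma_{k-1} \rightsquigarrow \Gamma_k$. Working locally in each ball $B(v, 65 C^\star 2^{-k} r_0)$ for $v \in V_k$, the hypothesis \eqref{e:alpha} asserts that the boundedly many points of $(V_{k-1} \cup V_k)$ in this ball all lie within $\alpha_{k,v} 2^{-k} r_0$ of a common line $\ell_{k,v}$. This lets us order the relevant points by their projection onto $\ell_{k,v}$ and construct a polygonal path that visits the new points of $V_k$ while dropping the nearby old points of $V_{k-1}$. The elementary Pythagorean estimate $\sqrt{a^2 + b^2} \leq a + b^2 / a$ shows that each such local re-routing increases the length of $\Gamma_{k-1}$ by at most a constant multiple of $\alpha_{k,v}^2 2^{-k} r_0$ beyond what would be incurred by a straight traversal along $\ell_{k,v}$. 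Summing over $v \in V_k$ (which are $2^{-k} r_0$-separated, so only boundedly many balls overlap any given ball), we obtain
\[
\Haus^1(\Gamma_k) \leq \Haus^1(\Gamma_{k-1}) + C(n, C^\star) \sum_{v \in V_k} \alpha_{k,v}^2 2^{-k} r_0,
\]
which telescopes, together with \eqref{e:asum}, to a uniform bound on $\Haus^1(\Gamma_k)$ of the size claimed in \eqref{e:G-length}.

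Once the $\Gamma_k$ are built, each is closed, connected, and contained in $\overline{B(x_0, C^\star r_0)}$ with uniformly bounded length, so by Corollary \ref{c:fund} a subsequence converges in the Hausdorff metric to a compact, connected $\Gamma \subseteq \overline{B(x_0, C^\star r_0)}$ satisfying \eqref{e:G-length} (absorbing the factor of $32$ into the implicit constant). Since $V_k \subseteq \Gamma_k$ and $V_k \to V$ in the Hausdorff metric, the containment $V \subseteq \Gamma$ passes to the limit. The principal obstacle is the inductive step: because $V_{k+1}$ need not contain $V_k$, one cannot merely insert new vertices as in Jones' classical construction, but must simultaneously delete old vertices and insert new ones while preserving connectivity and controlling the length increment. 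The generous dilation factor $65$ in \eqref{e:alpha} is used precisely to ensure that the local re-routings in adjacent balls $B(v, 65 C^\star 2^{-k} r_0)$ and $B(v', 65 C^\star 2^{-k} r_0)$ have enough overlap for the pieces to be glued together consistently, and verifying that this gluing can always be done while respecting the Pythagorean length accounting is the technical heart of the proof.
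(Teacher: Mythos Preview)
Your high-level strategy matches the paper's: build connected sets $\Gamma_k\supseteq V_k$ with uniformly bounded length, then extract a Hausdorff limit via Corollary~\ref{c:fund}. The convergence $V_k\to V$ and the endgame ($V\subseteq\Gamma$ by passing to limits) are handled correctly. However, the heart of your argument---the asserted telescoping bound
\[
\Haus^1(\Gamma_k)\leq \Haus^1(\Gamma_{k-1})+C(n,C^\star)\sum_{v\in V_k}\alpha_{k,v}^2 2^{-k}r_0
\]
---is not justified and, as stated, is not how the paper's proof proceeds. The difficulty is precisely the one you flag at the end but do not resolve: terminal vertices. Suppose $v\in V_k$ has $\alpha_{k,v}$ small but no neighbor in $V_k$ to its right within $30C^\star 2^{-k}r_0$. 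If the nearby $w\in V_{k-1}$ \emph{did} have a right-neighbor $w'$ (so $[w,w']\in\Gamma_{k-1}$), then to keep $\Gamma_k$ connected you must insert a long segment from $v$ to some $v_1$ at distance $\gtrsim 30C^\star 2^{-k}r_0$; the old edge $[w,w']$ does not fully cover this cost. Worse, if $w$ was \emph{also} terminal at level $k-1$, there is no old edge at all to pay with, yet $\alpha_{k,v}$ may be arbitrarily small. In either scenario the Pythagorean estimate contributes nothing, and your inequality fails.

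The paper repairs this with two bookkeeping devices you have not introduced. First, \emph{bridges} $B[k,v',v'']$---long segments augmented by infinite ``extension'' tails into all future generations---are frozen into every subsequent $\Gamma_j$, so connectivity across large gaps is paid for once and never again; the recursive estimate then includes a self-referential term $\tfrac{25}{27}\sum\Haus^1(C[k,v',v''])$ over disjoint bridge \emph{cores}, which is reabsorbed into $\Haus^1(\Gamma_k)$ at the end. Second, \emph{phantom length} $p_{k,v}=3C^\star 2^{-k}r_0$ is pre-allocated to every terminal vertex and carried forward (halving at each step) until it is spent on the edge that eventually appears. The actual inductive inequality (the paper's (9.3)) thus tracks edges, bridge lengths, phantom length, and core length simultaneously, and only after summing and reabsorbing does one recover the bound \eqref{e:G-length}. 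Your sketch is a correct outline of the architecture, but the telescoping step as you wrote it is a genuine gap.
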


\begin{remark} The ``sufficient" half of the Analyst's traveling salesman theorem is an application of Proposition \ref{p:curve}. To see this, suppose that $E\subseteq\RR^n$ is a bounded set with diameter $r_0>0$. For each $k\geq 0$, let $V_k$ be a maximal subset of $E$ such that $|v-v'|\geq 2^{-k}r_0$ for all distinct $v,v'\in V_k$. Then conditions $(V_I)$, $(V_\two)$, and $(V_\three)$ of Proposition \ref{p:curve} hold with $C^\star=2$. For each $k\geq 1$ and $v\in V_k$, let $Q$ be a minimal dyadic cube such that $v\in Q$ and $3Q\supseteq B(v,65C^\star 2^{-k}r_0)$ and let $\ell_{k,v}$ be a line for which $\sup_{x\in E\cap 3Q} \dist(x,\ell_{k,v})\leq 2\beta_E(3Q)\diam 3Q\sim_n \beta_E(3Q) 2^{-k}r_0$. Then $$\sup_{x\in (V_{k-1}\cup V_k)\cap B(v,65 C^\star 2^{-k}r_0)} \dist(x,\ell_{k,v})\leq C(n)\beta_E(3Q) 2^{-k}r_0=:\alpha_{k,v} 2^{-k}r_0.$$ Each cube $Q\in \Delta(\RR^n)$ can be associated to the pair $(k,v)$ in this way for at most $C(n)$ values of $k\geq 1$ and $v\in V_k$ by $(V_I)$. Thus, by Proposition \ref{p:curve}, there exists a compact, connected set $\Gamma\subseteq\RR^n$ containing $V:=\lim_{k\rightarrow\infty} V_k=\overline{E}$ with $$\Haus^1(\Gamma)\lesssim_n r_0+ \sum_{k=1}^\infty\sum_{v\in V_k} \alpha_{k,v}^2 2^{-k}r_0 \lesssim_n \diam E+\sum_{Q\in\Delta(\RR^n)} \beta_E(3Q)^2\diam Q.$$ \end{remark}

The proof of Proposition \ref{p:curve} is deferred to \S\S\ref{s:proof26} and \ref{ss:length}, which are independent of \S\S \ref{s:nec}--\ref{s:doubling}.

\section{Necessity: $\mu$ is 1-rectifiable implies \positivedensity and $J^{*}_{2}(\mu,x)<\infty$ $\mu$-a.e.} \label{s:nec}

Recall from the introduction that $m$-rectifiable measures have positive lower Hausdorff $m$-density almost everywhere.

\begin{lemma}[{\cite[Lemma 2.7]{BS}}] \label{l:positive2} Let $\mu$ be a Radon measure on $\RR^n$ and let $1\leq m\leq n-1$. If $\mu$ is $m$-rectifiable, then $\lD{m}(\mu,x)>0$ at $\mu$-a.e.~$x\in\RR^n$. \end{lemma}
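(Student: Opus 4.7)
The plan is to push $\mu$ forward to the parameter cube $[0,1]^m\subset\RR^m$, where the standard Lebesgue--Besicovitch differentiation theorem automatically gives positive lower $m$-density almost everywhere, and then transfer this conclusion back to $\RR^n$ via the Lipschitz estimate. Since $m$-rectifiability gives $\mu(\RR^n\setminus\bigcup_i E_i)=0$ for the Lipschitz images $E_i:=f_i([0,1]^m)$, I would first reduce to proving the conclusion on a single fixed $E_i$; write $f:=f_i$, $E:=E_i$, and $L:=\operatorname{Lip}(f)$.

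The second step is to build the pushforward measure on $[0,1]^m$. Because $f$ may fail to be injective, I would invoke a measurable selection theorem (e.g.~Jankov--von Neumann, using that $f$ is continuous with compact fibers) to produce a universally measurable right inverse $g\colon E\to[0,1]^m$ with $f\circ g=\operatorname{id}_E$, and then set $\widetilde\mu:=g_\#(\mu\res E)$, a finite Borel measure on $\RR^m$. On $\RR^m$ the classical differentiation theory takes over: decomposing $\widetilde\mu=\widetilde\mu_{\mathrm{ac}}+\widetilde\mu_{\mathrm{s}}$ with respect to Lebesgue measure, the absolutely continuous part has positive Radon--Nikodym derivative at $\widetilde\mu_{\mathrm{ac}}$-a.e.~$y$, while the singular part satisfies $\widetilde\mu_{\mathrm{s}}(B(y,r))/r^m\to\infty$ at $\widetilde\mu_{\mathrm{s}}$-a.e.~$y$. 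In either case $\lD{m}(\widetilde\mu,y)>0$ at $\widetilde\mu$-a.e.~$y$.

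To close the argument, I would use the Lipschitz bound to verify that $g^{-1}(B(g(x),\rho))\subseteq B(x,L\rho)$ for every $x\in E$ and $\rho>0$: any $x'$ in the left-hand side equals $f(g(x'))$, hence lies within $L\rho$ of $x=f(g(x))$. Consequently $\widetilde\mu(B(g(x),\rho))\leq\mu(B(x,L\rho))$, and dividing by $\omega_m(L\rho)^m$ before taking $\liminf_{\rho\downarrow 0}$ yields the pointwise estimate $\lD{m}(\mu,x)\geq L^{-m}\lD{m}(\widetilde\mu,g(x))$ for every $x\in E$. The bad set $\{x\in E:\lD{m}(\mu,x)=0\}$ then sits inside $g^{-1}(\{y:\lD{m}(\widetilde\mu,y)=0\})$, whose $\mu$-measure equals $\widetilde\mu(\{y:\lD{m}(\widetilde\mu,y)=0\})=0$ by the previous paragraph.

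The main technical wrinkle is the measurable selection, since $f$ can have highly overlapping fibers. For the case $m=1$ that the current paper actually uses, it can be avoided entirely: on any rectifiable curve $\Gamma$ of positive diameter the elementary connectedness bound $\Haus^1(\Gamma\cap B(x,r))\geq r$ holds for all $x\in\Gamma$ and all $r<\diam\Gamma/2$, and inserting this into the Besicovitch differentiation theorem for the pair $(\mu\res\Gamma,\Haus^1\res\Gamma)$ delivers $\lD{1}(\mu,x)>0$ at $\mu$-a.e.~$x\in\Gamma$ directly, without any selection.
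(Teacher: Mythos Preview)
Your argument is correct, but it follows a genuinely different route from the one the paper cites. The paper does not prove the lemma here; it points to \cite{BS}, where the mechanism is the standard density--packing inequality: if $\lD{m}(\mu,x)<t$ for all $x$ in a Borel set $A$, then $\mu(A)\lesssim t\,\mathcal{P}^m(A)$, and since a Lipschitz image of $[0,1]^m$ has finite $m$-dimensional packing measure, letting $t\downarrow 0$ forces $\mu(\{x\in E_i:\lD{m}(\mu,x)=0\})=0$. Your approach avoids packing measure entirely by transporting the problem to $[0,1]^m\subset\RR^m$ and invoking Lebesgue--Besicovitch differentiation there (for the singular part you are implicitly using the symmetric form of the differentiation theorem: $\mathcal{L}^m\perp\widetilde\mu_s$ gives $\mathcal{L}^m(B(y,r))/\widetilde\mu_s(B(y,r))\to 0$ at $\widetilde\mu_s$-a.e.\ $y$). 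The trade-off is that you need a measurable right inverse, which is a mild descriptive-set-theory input; the packing-measure route needs none, but does require knowing that Lipschitz images preserve $\sigma$-finiteness of $\mathcal{P}^m$. Your closing remark that for $m=1$ one can bypass both devices via the connectedness lower bound $\Haus^1(\Gamma\cap B(x,r))\geq r$ and direct differentiation of $\mu\res\Gamma$ against $\Haus^1\res\Gamma$ is a nice observation and is indeed the most elementary path for the case this paper actually uses.
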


Lemma \ref{l:positive2} is a consequence of the connection between lower Hausdorff $m$-density of a measure and $m$-dimensional packing measure $\mathcal{P}^m$. See \cite{BS} for details. By inspection, the proof in \cite{BS} shows that $\mu(\{x\in f([0,1]^m):\lD{m}(\mu,x)=0\})=0$ for every Lipschitz map $f:[0,1]^m\rightarrow\RR^n$. Thus, we have the following stronger formulation of Lemma \ref{l:positive2}.

\begin{lemma}\label{l:spu} Let $\mu$ be a Radon measure on $\RR^n$ and let $1\leq m\leq n-1$. Then the measure $\mu\res\{x\in\RR^n: \lD{m}(\mu,x)=0\}$ is purely $m$-unrectifiable.\end{lemma}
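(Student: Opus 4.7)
The plan is to obtain Lemma \ref{l:spu} as an immediate consequence of the strengthened form of Lemma \ref{l:positive2} that is quoted in the paragraph preceding the statement, namely, that
\[
\mu\bigl(\{x \in f([0,1]^m) : \lD m(\mu,x)=0\}\bigr) = 0
\quad\text{for every Lipschitz } f:[0,1]^m\to\RR^n.
\]
Granting this strengthened statement, the lemma is a direct unpacking of the definition of a purely $m$-unrectifiable measure.

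Concretely, I would set $Z := \{x\in\RR^n : \lD m(\mu,x)=0\}$ and $\nu := \mu\res Z$, and then fix an arbitrary Lipschitz map $f:[0,1]^m\to\RR^n$. From the definition of the restriction,
\[
\nu\bigl(f([0,1]^m)\bigr) = \mu\bigl(Z \cap f([0,1]^m)\bigr) = \mu\bigl(\{x \in f([0,1]^m): \lD m(\mu,x)=0\}\bigr),
\]
and the strengthened form of Lemma \ref{l:positive2} asserts that this final quantity equals zero. Since $f$ was arbitrary, Definition \ref{def:rect} yields that $\nu=\mu\res Z$ is purely $m$-unrectifiable, which is exactly the conclusion of Lemma \ref{l:spu}.

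There is essentially no obstacle beyond invoking the sharpened version of Lemma \ref{l:positive2}; the content of Lemma \ref{l:spu} relative to Lemma \ref{l:positive2} is precisely the promotion from an almost-everywhere statement to one that holds when intersected with any given Lipschitz image. The only delicate point, which is deferred to the cited strengthening, is verifying that the packing-measure argument in \cite[Lemma 2.7]{BS} in fact produces a bound on $\mu(\{x \in f([0,1]^m): \lD m(\mu,x)=0\})$ for each fixed $f$, and not only on $\mu(\{x\in\RR^n: \lD m(\mu,x)=0\})$ after the $m$-rectifiability hypothesis is imposed. If one wanted to avoid relying on that inspection, the same bound can be recovered by applying \cite[Lemma 2.7]{BS} to the measure $\mu\res f([0,1]^m)$, which is $m$-rectifiable by construction, and observing that $\lD m(\mu\res f([0,1]^m),x) \leq \lD m(\mu,x)$ pointwise.
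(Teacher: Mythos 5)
Your proof is correct and follows the paper's own route: the paper likewise derives Lemma \ref{l:spu} directly from the observation that the packing-measure argument behind Lemma \ref{l:positive2} yields $\mu(\{x\in f([0,1]^m):\lD{m}(\mu,x)=0\})=0$ for each fixed Lipschitz $f$, and then unwinds the definition of pure $m$-unrectifiability exactly as you do. Your closing remark---applying Lemma \ref{l:positive2} to the $m$-rectifiable measure $\mu\res f([0,1]^m)$ and using $\lD{m}(\mu\res f([0,1]^m),x)\leq\lD{m}(\mu,x)$---is in fact a cleaner, self-contained justification of that key fact than the paper's appeal to ``inspection'' of the proof in \cite{BS}.
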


Our goal in the remainder of the section is to prove the following theorem.

\begin{theorem} \label{t:nec} Let $n\geq 2$. If $\mu$ is a Radon measure on $\RR^n$ and $\Gamma$ is a rectifiable curve, then the function $J^*_2(\mu,\cdot)\in L^1(\mu\res\Gamma)$ and $J^*_2(\mu,x)<\infty$ at $\mu$-a.e.~$x\in\Gamma$.\end{theorem}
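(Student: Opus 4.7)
The plan is to control $\int_\Gamma J^*_2(\mu,x)\,d\mu(x)$, which by Fubini equals
$$\sum_{Q\in\Delta_1(\RR^n)} \beta^*_2(\mu,Q)^2\, \diam Q\cdot \frac{\mu(Q\cap\Gamma)}{\mu(Q)},$$
and then deduce $\mu$-a.e.\ finiteness on $\Gamma$ as a consequence of integrability. Since $\Gamma$ is a rectifiable curve, by Lemma \ref{l:fund} we may assume $\Haus^1(\Gamma)<\infty$ and work with Corollary \ref{c:tst} (the Analyst's TST for dilation factors strictly greater than $3$).

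The first step is to make a good choice of a comparison line at each scale. For each $Q\in\Delta_1(\RR^n)$ with $\mu(Q\cap\Gamma)>0$, fix a dilation factor $A>1600\sqrt{n}$ large enough that $3R\subseteq AQ$ for every $R\in\Delta^*(Q)$, and let $\ell_Q$ be a near-optimizer for $\beta_\Gamma(AQ)$. Using the identity \eqref{e:beta2},
$$\beta_2(\mu,3R,\ell_Q)^2\min\!\left(\tfrac{\mu(3R)}{\diam 3R},1\right) = \int_{3R}\!\left(\tfrac{\dist(x,\ell_Q)}{\diam 3R}\right)^2 \min\!\left(\tfrac{1}{\diam 3R},\tfrac{1}{\mu(3R)}\right)d\mu,$$
I split the integral into $3R\cap\Gamma$ and $3R\setminus\Gamma$. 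On $\Gamma\cap 3R\subseteq \Gamma\cap AQ$, the approximation property of $\ell_Q$ gives $\dist(x,\ell_Q)\lesssim_n \beta_\Gamma(AQ)\,\diam 3R$, and bounding $\min(\cdot,\cdot)\leq 1/\mu(3R)$ yields a contribution $\lesssim_n \beta_\Gamma(AQ)^2 \,\mu(\Gamma\cap 3R)/\mu(3R)\leq \beta_\Gamma(AQ)^2$. On $3R\setminus\Gamma$, I use the crude bound $\dist(x,\ell_Q)\leq \diam 3R$ together with $\min(\cdot,\cdot)\leq 1/\diam 3R$, producing a residual contribution of order $\mu(3R\setminus\Gamma)/\diam 3R$.

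Taking the maximum over the $\lesssim_n 1$ cubes in $\Delta^*(Q)$ and multiplying by $\diam Q\cdot \mu(Q\cap\Gamma)/\mu(Q)$, the $\Gamma$-part contributes
$$\sum_Q \beta_\Gamma(AQ)^2\,\diam Q \lesssim_n \Haus^1(\Gamma)<\infty$$
by Corollary \ref{c:tst}, using that $\mu(Q\cap\Gamma)/\mu(Q)\leq 1$. The $\Gamma^c$-part, after swapping the order of summation and using that each $R$ belongs to $\Delta^*(Q)$ for at most $\lesssim_n 1$ cubes $Q$, reduces to a dyadic sum in which the weight $\mu(Q\cap\Gamma)/\mu(Q)$ must be paired with $\mu(3R\setminus\Gamma)$: precisely where $\mu$ has substantial mass off $\Gamma$ the ratio $\mu(Q\cap\Gamma)/\mu(Q)$ becomes small, and this is exactly the cancellation that the $\min(\mu(3R)/\diam 3R,1)$ factor in the definition of $\beta^*_2$ is designed to exploit, together with the fact that the contributing cubes lie in a neighborhood of $\Gamma$ whose $\mu$-mass is controlled by the total mass of $\mu$ near the compact set $\Gamma$.

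The main technical obstacle, and the reason the theorem requires the $\beta^*$ numbers rather than the simpler $\beta_2(\mu,3Q)$ (cf.\ Remark \ref{r:mo}), is precisely this second (off-$\Gamma$) summation: handling it requires a careful interplay between the dyadic structure, the truncation provided by the $\min$ factor, and the weighting by $\mu(Q\cap\Gamma)/\mu(Q)$, as opposed to a direct application of the Analyst's TST. Once both pieces are summable, the $L^1$ bound follows, hence $J^*_2(\mu,\cdot)<\infty$ $\mu$-a.e.\ on $\Gamma$.
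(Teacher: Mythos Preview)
Your reduction to the dyadic sum via Fubini and the handling of the on-$\Gamma$ piece by Corollary \ref{c:tst} are correct and match the paper's approach. The genuine gap is in the off-$\Gamma$ piece, where you use the crude bound $\dist(x,\ell_Q)\leq \diam 3R$ and then appeal to a vague ``cancellation'' between $\mu(3R\setminus\Gamma)$ and the ratio $\mu(Q\cap\Gamma)/\mu(Q)$. This cancellation is not the operative mechanism, and in fact the paper's argument discards the ratio entirely (bounding it by $1$) before attacking the off-$\Gamma$ sum. To see that your crude bound is insufficient, take $\Gamma$ a unit segment and $\mu=\Haus^1\res\Gamma+\Haus^1\res\Gamma'$ with $\Gamma'$ a parallel segment at distance $\delta$: at each scale $2^{-k}\gtrsim\delta$ the roughly $2^k$ cubes $Q$ meeting $\Gamma$ each contribute $\diam Q\cdot\tfrac12\cdot\tfrac{\mu(3R\setminus\Gamma)}{\diam 3R}\sim 2^{-k}$, so your off-$\Gamma$ bound scales like $\log(1/\delta)$ rather than being uniformly controlled, and for general Radon $\mu$ there is no finiteness argument at all.

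What is missing is the observation that, for the cubes that matter, the line $\ell_Q$ approximates $\Gamma\cap aQ$ so well that $\dist(x,\ell_Q)$ is essentially $\dist(x,\Gamma)$ for points $x$ off $\Gamma$; this is made precise by the paper's dichotomy into $\Delta_\Gamma$ (where $\beta^*_2(\nu,Q)\lesssim\beta_\Gamma(aQ)$) and $\Delta_2$ (where $\beta_\Gamma(aQ)\ll\beta^*_2(\nu,Q)$), and it is only on $\Delta_2$ that the off-$\Gamma$ estimate is needed. Once $\dist(x,\ell_Q)$ is replaced by $\dist(x,\Gamma)$, a Whitney decomposition of $\RR^n\setminus\Gamma$ turns the integral into a sum over Whitney cubes $T$ with $\dist(T,\Gamma)\sim 2^{-k}$; the factor $(\dist(x,\Gamma)/\diam 3R)^2\cdot(\diam 3R)^{-1}$ then produces a \emph{geometric} gain $(2^{-k}/2^{-k_0})^2$ across scales $k\geq k_0-O(1)$, and summing first in $k_0$ (the scale of $Q$) yields a bound by $\nu(\RR^n\setminus\Gamma)$ after restricting to a compactly supported piece $\nu=\mu\res K$. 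This Whitney argument, not the density ratio, is what the $\min(\mu(3R)/\diam 3R,1)$ truncation is designed to enable, via the inequality $m_R/\nu(3R)\leq 1/\diam 3R$.
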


At the core of Theorem \ref{t:nec} is the following quantitative statement, which is some extension and generalization of \cite[Proposition 3.1]{BS}. In particular, let us stress that the lower Ahlfors regularity condition on $E\subseteq \Gamma$ has been removed.

\begin{proposition}\label{p:1L} Let $n\geq 2$. If $\nu$ is a finite Borel measure on $\RR^n$ and $\Gamma$ is a rectifiable curve, then
\begin{equation} \label{e:1L} \sum_{\stackrel{Q\in\Delta(\RR^n)}{\nu(\Gamma\cap 1600\sqrt{n}\,Q)>0}} \beta^*_2(\nu,Q)^2\diam Q\lesssim_{n} \Haus^1(\Gamma)+\nu(\RR^n\setminus \Gamma).\end{equation}
\end{proposition}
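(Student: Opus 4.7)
The plan is to split $\nu=\nu_1+\nu_2$ with $\nu_1:=\nu\res\Gamma$ and $\nu_2:=\nu\res(\RR^n\setminus\Gamma)$, and control the two pieces separately: the $\Gamma$-piece via the Analyst's traveling salesman theorem (Corollary \ref{c:tst}), and the off-$\Gamma$ piece by the total mass $\nu(\RR^n\setminus\Gamma)=\nu_2(\RR^n)$. For each $Q$ appearing in the sum, the hypothesis $\nu(\Gamma\cap 1600\sqrt{n}\,Q)>0$ implies in particular that $\Gamma\cap 1600\sqrt{n}\,Q\neq\emptyset$, so I would choose $\ell_Q$ to be a line that nearly attains $\alpha_Q:=\beta_\Gamma(1600\sqrt{n}\,Q)$, the best sup-$\beta$ line for the set $\Gamma$ on the enlarged cube.

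Using that $3R\subseteq 1600\sqrt{n}\,Q$ for each $R\in\Delta^*(Q)$, two pointwise bounds are immediate: $\dist(x,\ell_Q)\lesssim_n \alpha_Q\diam 3R$ for $x\in\Gamma\cap 3R$, and $\dist(x,\ell_Q)\lesssim_n \diam 3R$ for general $x\in 3R$ (since $\ell_Q$ must stay within $\alpha_Q\diam(1600\sqrt{n}\,Q)$ of some point of $\Gamma\cap 1600\sqrt{n}\,Q$). Plugging these into the integral representation \eqref{e:beta2} of $\beta_2(\nu,3R,\ell_Q)^2\min(\nu(3R)/\diam 3R,1)$ and splitting the integration into $3R\cap\Gamma$ (against $d\nu_1$) and $3R\setminus\Gamma$ (against $d\nu_2$) produces an estimate of the form
\[
\beta_2^*(\nu,Q)^2 \;\le\; C_n\,\alpha_Q^2 \;+\; C_n\max_{R\in\Delta^*(Q)}\min\!\left(\frac{\nu_2(3R)}{\diam 3R},\,\frac{\nu_2(3R)}{\nu(3R)}\right).
\]
Multiplying by $\diam Q$ and summing over $Q$, the first piece is immediately handled by Corollary \ref{c:tst} with $E=\Gamma$ and dilation $a=1600\sqrt{n}>3$, giving $\sum_Q \alpha_Q^2\diam Q\lesssim_n \Haus^1(\Gamma)$.

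The main obstacle is to show that the off-$\Gamma$ sum $\Sigma_2:=\sum_Q \max_{R\in\Delta^*(Q)}\min(\nu_2(3R)/\diam 3R,\nu_2(3R)/\nu(3R))\cdot\diam Q$ is bounded by $C_n\,\nu_2(\RR^n)$. A naive $\max_R\le\sum_R$ only yields $\sum_R\min(\nu_2(3R),\diam 3R)$, which is not summable across all dyadic scales. The sharper estimate must exploit two constraints in tandem: (a) the hypothesis $\nu(\Gamma\cap 1600\sqrt{n}\,Q)>0$ forces every contributing $R$ to lie within $\lesssim_n \diam 3R$ of $\Gamma$, so for a given $x\in\spt\nu_2$ only scales $\diam 3R\gtrsim_n d(x,\Gamma)$ contribute at all; and (b) the $\min$-factor decays in both extremes, either because $\diam 3R\gg\nu_2(3R)$ at small scales (making $\nu_2(3R)/\diam 3R$ tiny) or because $\nu_1(3R)\gg\nu_2(3R)$ at scales where $\Gamma$ dominates (making $\nu_2(3R)/\nu(3R)$ tiny).

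To formalize, I would apply Fubini to write $\Sigma_2=\int g(x)\,d\nu_2(x)$ and show $g\lesssim_n 1$ uniformly on $\spt\nu_2$. For each $x$, the relevant scales are $\diam 3R\gtrsim_n d(x,\Gamma)$, and using the sharper bound $\dist(x,\ell_Q)\lesssim_n d(x,\Gamma)+\alpha_Q\diam 3R$ on the off-$\Gamma$ integrand (available whenever $aQ$ captures a near-closest $\Gamma$-point to $x$), the per-scale contribution becomes a geometric series in $k$ that is dominated at the transition scale $2^{-k}\sim d(x,\Gamma)$. Summing the tails on both sides yields a bound independent of $x$, so $g\lesssim_n 1$ and $\Sigma_2\lesssim_n \nu_2(\RR^n)$. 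Combined with the $\Gamma$-piece this yields \eqref{e:1L}. The principal technical difficulty is executing this Fubini/geometric-series bookkeeping cleanly for arbitrary finite $\nu_2$, and in particular handling the edge case in which the nearest $\Gamma$-point to $x$ falls outside $aQ$, where one must instead fall back on the crude bound $\dist(x,\ell_Q)\lesssim_n\diam 3R$ and verify that only boundedly many scales are so affected.
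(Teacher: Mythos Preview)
Your approach is essentially the same as the paper's: choose $\ell_Q$ to (nearly) realize $\alpha_Q=\beta_\Gamma(aQ)$, control the on-$\Gamma$ contribution via Corollary \ref{c:tst}, and for the off-$\Gamma$ part replace the crude bound by $\dist(x,\ell_Q)\lesssim_n \dist(x,\Gamma)+\alpha_Q\diam aQ$, then sum the resulting $\dist(x,\Gamma)^2/(\diam Q)^2$ terms as a geometric series in the scale. The paper organizes the same two steps slightly differently: it first splits the cubes via the dichotomy $\varepsilon\beta_2^*(\nu,Q)\lessgtr\beta_\Gamma(aQ)$ (so that on $\Delta_2$ the ``near $\ell$'' piece can be absorbed into the left-hand side), and it packages the off-$\Gamma$ summation through a Whitney decomposition of $\RR^n\setminus\Gamma$ rather than a direct Fubini/geometric-series argument; but these are cosmetic differences encoding the same decay. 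One small point: rather than handling the edge case where the nearest $\Gamma$-point to $x$ escapes $aQ$, the paper simply enlarges the dilation to $a=1600\sqrt{n}+3200n$ so that for every $x\in F_R\subseteq 1600\sqrt{n}\,Q$ one has $\dist(x,\Gamma\cap aQ)=\dist(x,\Gamma)$ outright, which is cleaner than your proposed scale-counting workaround.
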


\begin{proof} The proof that we present is an adaptation of the proof of Proposition 3.1 in \cite{BS}, the forerunner to this paper by the same name. For clarity, we develop the part of the proof that needs to be altered.

Fix constants $\varepsilon>0$ and $a>3$ to be specified later, ultimately depending on only the ambient dimension $n$. Define two families $\Delta_\Gamma$ and $\Delta_2$ of dyadic cubes in $\RR^n$, as follows.
\begin{align*}\Delta_\Gamma&=\{Q\in \Delta(\RR^n): \nu(\Gamma\cap 1600\sqrt{n}\,Q)>0\text{ and } \varepsilon\beta^*_2(\nu,Q)\leq \beta_{\Gamma}(aQ)\},\text{ and}\\
\Delta_2&=\{Q\in\Delta(\RR^n):\nu(\Gamma\cap 1600\sqrt{n}\,Q)>0\text{ and } \beta_{\Gamma}(aQ) < \varepsilon\beta_2^*(\nu,Q)\}.\end{align*} Note that $\Delta_\Gamma$ and $\Delta_2$ consist of the cubes appearing in \eqref{e:1L} for which either $\beta_{\Gamma}(aQ)$ or $\varepsilon\beta_2^*(\nu,Q)$ is the dominant quantity, respectively. It follows that \begin{equation}\sum_{\stackrel{Q\in\Delta(\RR^n)}{\nu(\Gamma\cap 1600\sqrt{n}\,Q)>0}} \beta^*_2(\nu,Q)^2\diam Q \leq
  \underbrace{\varepsilon^{-2}\sum_{Q\in\Delta_\Gamma}\beta_{\Gamma}(aQ)^2\diam Q}_I +
  \underbrace{\sum_{Q\in\Delta_2}\beta_{2}^{*}(\nu,Q)^2\diam Q}_{\two}. \label{I-II}
\end{equation} We shall estimate the terms $I$ and $\two$ separately. The former will be controlled by $\Haus^1(\Gamma)$ and the latter will be controlled by $\nu(\RR^n\setminus\Gamma)$.

To estimate $I$, we note that by Jones' (when $n=2$) and by Okikiolu's (when $n\geq 3$) traveling salesman theorems (in the form of Corollary \ref{c:tst}),
\begin{equation}\label{Iest} I \leq \varepsilon^{-2} \sum_{Q\in\Delta(\RR^n)}\beta_\Gamma(aQ)^2\diam Q \leq C'\varepsilon^{-2}\Haus^1(\Gamma),\end{equation} where $C'$ is a finite constant determined by $n$ and $a$.

In order to estimate $\two$, first decompose $\RR^n\setminus \Gamma$ into a family $\cT$ of Whitney cubes with the following specifications. \begin{itemize}
\item The union over all sets in $\cT$ is $\RR^n\setminus \Gamma$.
\item Each set in $\cT$ is a half open cube in $\RR^n$ of the form $[a_1,b_1)\times\dots\times[a_n,b_n)$.
\item If $T_1,T_2\in\cT$, then either $T_1=T_2$ or $T_1\cap T_2=\emptyset$.
\item If $T\in\cT$, then $\dist(T,\Gamma)\leq \diam T\leq 4\dist(T,\Gamma)$.
\end{itemize} (To obtain this decomposition, one can modify the standard Whitney decomposition in Stein's book \cite{Stein} by replacing each closed cube with the corresponding half open cube.) Here $\dist(T,\Gamma)=\inf_{x\in T}\inf_{y\in \Gamma}|x-y|$. For each $k\in\ZZ$, we define \begin{equation*} \cT_k=\{T\in\cT:2^{-k-1}<\dist(T,\Gamma)\leq 2^{-k}\}.\end{equation*} Also, for every cube $Q$, we set $\cT(Q)=\{T\in\cT:\nu(Q\cap T)>0\}$ and $\cT_k(Q)=\cT_k\cap\cT(Q)$. First we will estimate $\beta_2^*(\nu,Q)^2\diam Q$ for each $Q\in\Delta_2$ and then we will estimate $\two$.

Fix $Q\in\Delta_2$, say with $\side Q=2^{-k_0}$, and pick any line $\ell$ in $\RR^n$ such that \begin{equation}\label{e:apple0} \sup_{z\in \Gamma\cap aQ} \dist(z,\ell) \leq 2\beta_{\Gamma}(aQ)\diam aQ  <2\varepsilon \beta^*_2(\nu,Q) \diam aQ = 2a\varepsilon \beta_2^*(\nu,Q)\diam Q.\end{equation} We will estimate $\beta_2^*(\nu,Q)^2$ from above using $\ell$: \begin{equation*} \beta_2^*(\nu,Q)^2 \leq \max_{R\in\Delta^*(Q)} \beta_2(\nu,3R,\ell)^2 \min\left\{\frac{\nu(3R)}{\diam 3R},1\right\}.\end{equation*} Fix a cube $R\in\Delta^*(Q)$ nearby $Q$ and recall that $\diam Q\leq \diam R\leq 2\diam Q$. For ease of notation, set $m_R=\min\{\nu(3R)/\diam 3R,1\}$. To estimate $\beta_2(\nu,3R,\ell)^2 m_R$ from above, divide $3R$ into two sets $N_R$ (``near") and $F_R$ (``far"), where
\begin{equation*}N_R=\{x\in 3R: \dist(x,\ell)\leq 2a\varepsilon\beta_2^*(\nu,Q)\diam Q \}\end{equation*} and \begin{equation*}F_R=\{x\in 3R :\dist(x,\ell)>2a\varepsilon\beta_2^*(\nu,Q)\diam Q\}.\end{equation*} It immediately follows that \begin{align*}\beta_2(\nu,3R,\ell)^2 m_R &\leq
\int_{N_R} \left(\frac{\dist(x,\ell)}{\diam 3R}\right)^2 \, \frac{d\nu(x)}{\nu(3R)} + \int_{F_R} \left(\frac{\dist(x,\ell)}{\diam 3R}\right)^2 m_R\, \frac{d\nu(x)}{\nu(3R)}\\
&\leq
\frac{4}{9}a^2\varepsilon^2 \beta_2^*(\nu,Q)^2 + \int_{F_R} \left(\frac{\dist(x,\ell)}{\diam 3R}\right)^2 m_R\, \frac{d\nu(x)}{\nu(3R)}.\end{align*} By the triangle inequality and \eqref{e:apple0}, we have $$\dist(x,\ell) \leq \dist(x,\Gamma\cap aQ) + 2a\varepsilon\beta^*_2(\nu,Q)\diam Q.$$ Using this and the inequality $(p+q)^2 \leq 2p^2+2q^2$, it follows that $$\beta_2(\nu,3R,\ell)^2 m_R \leq \frac{4}{3}a^2\varepsilon^2 \beta_2^*(\nu,Q)^2 + 2\int_{F_R} \left(\frac{\dist(x,\Gamma\cap aQ)}{\diam 3R} \right)^2 m_R\, \frac{d\nu(x)}{\nu(3R)}.$$
Now, for each $x\in F_R\subseteq 3R$, we have $\dist(x,\Gamma) \leq \diam 1600\sqrt{n}\,Q$, because $3R\subseteq 1600\sqrt{n}\,Q$ and $\nu(\Gamma\cap 1600\sqrt{n}\,Q)>0$.
Hence, taking the constant $$a:=1600\sqrt{n}+3200n$$ (so $a=1600\sqrt{n}+2\diam 1600\sqrt{n}\,Q$) ensures that $\dist(x,\Gamma\cap aQ)=\dist(x,\Gamma)$ for all $x\in F_R$. Thus,  $$\beta_2(\nu,3R,\ell)^2 m_R \leq \frac{4}{3}a^2\varepsilon^2\beta_2^*(\nu,Q)^2 + 2\int_{F_R} \left(\frac{\dist(x,\Gamma)}{\diam 3R}\right)^2m_R\,\frac{d\nu(x)}{\nu(3R)}.$$ Now, letting $R$ range over  $\Delta^*(Q)$ and declaring that $\varepsilon$ be chosen so that $(4/3)a^2\varepsilon^2=1/3$, we conclude that $$\beta_2^*(\nu,Q)^2 \leq \max_{R\in\Delta^*(Q)} 3\int_{F_R} \left(\frac{\dist(x,\Gamma)}{\diam 3R}\right)^2 m_R\, \frac{d\nu(x)}{\nu(3R)}.$$ Next, because $m_R/\nu(3R) \leq 1/\diam 3R \leq 1 / \diam 3Q= 1/3\diam Q$, we obtain \begin{equation}\label{e:prewhit}\beta_2^*(\nu,Q)^2 \diam Q \leq \max_{R\in\Delta^*(Q)} \int_{F_R} \left(\frac{\dist(x,\Gamma)}{\diam 3R}\right)^2\,d\nu(x).\end{equation}
Note that if $x\in F_R$ for some $R\in\Delta^*(Q)$, then $x\not\in\Gamma$ by \eqref{e:apple0} and $3R\subseteq aQ$.
Thus, we may employ the Whitney decomposition $\cT$ of $\RR^n\setminus\Gamma$ to estimate the right hand side of \eqref{e:prewhit}: $$\beta_2^*(\nu,Q)^2 \diam Q \leq \max_{R\in\Delta^*(Q)} \sum_{T\in \cT(3R)} \sup_{x\in T} \left(\frac{\dist(x,\Gamma)}{\diam Q} \right)^2 \nu(T\cap 3R).$$
Because $3R\subseteq aQ$ for all $R\in\Delta^*(Q)$ by our choice of $a$ above, it follows that $$\beta_2^*(\nu,Q)^2\diam Q \leq \sum_{T\in\cT(aQ)} \sup_{x\in T}\left(\frac{\dist(x,\Gamma)}{\diam Q}\right)^2 \nu(T\cap aQ).$$ Recall that $\side Q = 2^{-k_0}$. If $T\in \cT_k(aQ)$, then by bounding the distance between a point in $T\cap aQ$ and a point in $\Gamma\cap 1600\sqrt{n}\,Q$, we observe that $$2^{-k-1} \leq \dist(T,\Gamma) \leq \diam aQ = a\sqrt{n}\side Q = a\sqrt{n}2^{-k_0}.$$ It follows that $k\geq k_1:= k_0-1-\lfloor \log_2 a\sqrt{n}\rfloor$ whenever $T\in\cT_k(aQ)$. Also, if $T\in\cT_k$ and $x\in T$, then $\dist(x,\Gamma) \leq \dist(T,\Gamma)+\diam T \leq 5\dist(T,\Gamma) \leq 5\cdot 2^{-k}$. Therefore, \begin{equation}\label{e:38analogue}\beta_2^*(\nu,Q)^2\diam Q \leq 25 \sum_{k=k_1}^\infty \sum_{T\in\cT_k(aQ)} \left(\frac{2^{-k}}{\diam Q}\right)^2 \nu(T\cap aQ).\end{equation} This estimate is valid for every cube $Q\in\Delta_2$. We emphasize that equation \eqref{e:38analogue} is the analogue of \cite[(3.8)]{BS} in the proof of \cite[Proposition 3.1]{BS}.

Equipped with \eqref{e:38analogue}, one may now repeat the argument appearing after \cite[(3.8)]{BS} in the proof of \cite[Proposition 3.1]{BS}, \emph{mutatis mutandis}, to obtain \begin{equation} \label{IIest} \two \lesssim_{n} \nu(\RR^n\setminus \Gamma).\end{equation}
Combining (\ref{I-II}), (\ref{Iest}) and (\ref{IIest}), we obtain \eqref{e:1L}, as desired.\end{proof}

\begin{remark}The proof of Proposition \ref{p:1L} is robust in the sense that it does not overly rely on the specific geometry or combinatorics of sets in $\Delta^*(Q)$. For example, a version of the proposition holds if the triples $3R$ of cubes $R\in\Delta^*(Q)$ appearing the definition of $\beta^*_2(\mu,Q)$ are replaced with a family of balls that are nearby $Q$ and whose diameters are comparable to the diameter of $Q$, provided that all relevant constants are chosen uniformly across $Q\in\Delta(\RR^n)$. We leave details to the interested reader.\end{remark}

\begin{proof}[Proof of Theorem \ref{t:nec}] Let $\mu$ be a Radon measures on $\RR^n$ and let $\Gamma$ be a rectifiable curve. Then
\begin{align}
\int_{\Gamma} J^{*}_{2}(\mu,x)\,d\mu(x)
 \notag &= \sum_{Q\in\Delta_1(\RR^n)} \beta_{2}^{*}(\mu,Q)^2\frac{\diam Q}{\mu(Q)}\int_{\Gamma}\chi_Q(x)\,d\mu(x)\\
 &=  \sum_{\stackrel{Q\in\Delta_1(\RR^n)}{\mu(\Gamma\cap Q)>0}} \beta_{2}^{*}(\mu,Q)^2 \diam Q\, \frac{\mu(\Gamma\cap Q)}{\mu(Q)} \leq \sum_{\stackrel{Q\in\Delta_1(\RR^n)}{\mu(\Gamma\cap Q)>0}} \beta_{2}^{*}(\mu,Q)^2 \diam Q. \label{e:nec1}
\end{align} Let $K$ be the closure of the union of cubes $$\bigcup_{\stackrel{Q\in\Delta_1(\RR^n)}{\mu(\Gamma\cap Q)>0}}\bigcup_{R\in\Delta^*(Q)} 3R,$$ which is compact since $\Gamma$ is bounded. Then the restriction $\nu=\mu\res K$ is finite and \begin{equation}\label{e:nec2}\sum_{\stackrel{Q\in\Delta_1(\RR^n)}{\mu(\Gamma\cap Q)>0}} \beta_{2}^{*}(\mu,Q)^2 \diam Q = \sum_{\stackrel{Q\in\Delta_1(\RR^n)}{\nu(\Gamma\cap Q)>0}} \beta_{2}^{*}(\nu,Q)^2 \diam Q\lesssim_n \Haus^1(\Gamma)+\nu(\RR^n\setminus\Gamma)<\infty\end{equation} by Proposition \ref{p:1L}. Chaining together inequalities in \eqref{e:nec1} and \eqref{e:nec2}, we conclude that $J^*_2(\mu,\cdot)\in L^1(\mu\res \Gamma)$. Therefore, $J_2^*(\mu,x)<\infty$ at $\mu$-a.e.~ $x\in\Gamma$, as well.
\end{proof}

\begin{remark}\label{r:not-necessary} Recall from \eqref{e:starstar} that if $\mu$ is a Radon measure on $\RR^n$ and $Q\in\Delta(\RR^n)$, then $$\beta_2^{**}(\mu,Q)= \inf_\ell \max\left\{\beta_2(\mu,3R,\ell):R\in \Delta^*(Q)\right\},$$ where the infimum runs over all straight lines in $\RR^n$. Define the density-normalized Jones function $J^{**}_2(\mu,x)$ associated to the numbers $\beta_2^{**}(\mu,Q)$ by $$J_2^{**}(\mu,x):=\sum_{Q\in\Delta_1(\RR^n)} \beta_2^{**}(\mu,Q)^2\frac{\diam Q}{\mu(Q)}\chi_Q(x).$$ In Theorem \ref{t:nec}, we showed that if $\mu$ is a Radon measure and $\Gamma$ is a rectifiable curve, then $J^*_2(\mu,x)<\infty$ at $\mu$-a.e.~$x\in\Gamma$. However, it is currently an \emph{open problem} to decide whether $\mu$ is a Radon measure and $\Gamma$ is a rectifiable curve imply that $J_2^{**}(\mu,x)<\infty$ at $\mu$-a.e.~$x\in\Gamma$. For the motivation for this problem, see Remark \ref{r:sufficient}.\end{remark}

\section{Sufficiency: \positivedensity and $J^{*}_{p}(\mu,x)<\infty$ $\mu$-a.e.~implies $\mu$ is 1-rectifiable}\label{s:suf}

Our goal in this section is to show that $\lD1(\mu,\cdot)>0$ almost everywhere and $J^*_p(\mu,\cdot)<\infty$ almost everywhere are together a sufficient condition for a Radon measure $\mu$ on $\RR^n$ to be 1-rectifiable. As an intermediate step, we first introduce and work with  beta numbers and weighted Jones functions that are adapted to cubes with uniformly large (coarse) density.

Let $\mu$ be a Radon measure on $\RR^n$, let $1\leq p<\infty$, and let $c>0$. For all dyadic cubes $Q\in\Delta(\RR^n)$, we define $\beta_p^{*,c}(\mu,Q)\in[0,1]$ by \begin{equation}\begin{split}\beta_p^{*,c}(\mu,Q)^2 = \inf_\ell \max\Big\{\beta_p(\mu,3R,\ell)^2 \min\{c,1\}:\ &R\in \Delta^*(Q)\text{ and }\\
&\mu(3R)\geq c\diam 3R\Big\},\end{split}\end{equation} where in the infimum $\ell$ ranges over all straight lines in $\RR^n$. If there do not exist cubes $R\in\Delta^*(Q)$ such that $\mu(3R)\geq c\diam 3R$, then by convention we let $\beta_p^{*,c}(\mu,x)=0$. Evidently, for every measure $\mu$ and dyadic cube $Q$, we have the comparison $$\beta_p^{*,c}(\mu,Q) \leq \beta_p^*(\mu,Q)\quad\text{for all }c>0.$$ The \emph{$c$-adapted $L^p$ density-normalized Jones function} $J_p^{*,c}(\mu,x)$ is defined by \begin{equation}J_p^{*,c}(\mu,x) = \sum_{Q\in\Delta_1(\RR^n)} \beta_p^{*,c}(\mu,Q)^2 \frac{\diam Q}{\mu(Q)}\chi_Q(x)\in[0,\infty]\quad\text{for all $x\in\RR^n$}.\end{equation} In particular, if $J^*_p(\mu,x)<\infty$ $\mu$-a.e., then $J^{*,c}_p(\mu,x)<\infty$ for all $c>0$, at $\mu$-a.e. $x\in\RR^n$.

\begin{theorem} \label{t:suf} Let $\mu$ be a Radon measure on $\RR^n$, let $1\leq p<\infty$, and let $c>0$. Then $$\mu\res\{x\in\RR^n: \lD1(\mu,x)>(3/2)\sqrt{n}\cdot c\text{ and }J^{*,c}_p(\mu,x)<\infty\}$$ is 1-rectifiable. \end{theorem}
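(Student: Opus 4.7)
The plan is to show that $\mu$-almost every point of
\[
E := \bigl\{x \in \RR^n : \lD1(\mu,x) > (3/2)\sqrt{n}\,c\text{ and }J^{*,c}_p(\mu,x)<\infty\bigr\}
\]
lies on some rectifiable curve, by feeding tree-like sequences $(V_k)_{k\geq 0}$ into Proposition \ref{p:curve}. First I would pass to a compact subset $E' \subseteq E$ on which, for suitable constants $\eta, M > 0$ and $r_1 > 0$:
(i) $\mu(B(x,r)) \geq 2((3/2)\sqrt{n}\,c + \eta)\,r$ for all $x \in E'$ and $0 < r \leq r_1$;
(ii) $\mu(B(x,r) \cap E') \geq \tfrac{1}{2}\mu(B(x,r))$ for all $x \in E'$ and $0 < r \leq r_1$ (Lebesgue density refinement);
(iii) $J^{*,c}_p(\mu,x) \leq M$ for every $x \in E'$.
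Such $E'$ exhaust $E$ up to a $\mu$-null set. Localizing further to $E' \cap Q_0$ for a dyadic cube $Q_0$ with $\side Q_0 = r_0 \ll r_1$ and $\mu(E' \cap Q_0) > 0$, and taking a countable union at the end, it suffices to cover $E' \cap Q_0$ by a single rectifiable curve.

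For each $k \geq 0$, let $\mathcal{Q}_k$ be the dyadic cubes $Q$ of side length $2^{-k}r_0$ with $\mu(Q \cap E') > 0$, choose $v_{k,Q} \in E' \cap Q$, and set $V_k := \{v_{k,Q} : Q \in \mathcal{Q}_k\}$. Dyadic disjointness gives the separation $(V_I)$, and the uniform lower density on $E'$ populates parent and child cubes of every $v_{k,Q}$, yielding $(V_\two)$ and $(V_\three)$ with $C^\star$ of order $\sqrt{n}$. The threshold $(3/2)\sqrt{n}\,c$ is calibrated precisely so that whenever $R\in\Delta^*(Q)$ contains a point $x\in E'$ and $\side R\leq r_1$, the inclusion $B(x,\side R)\subseteq 3R$ together with (i) forces $\mu(3R)\geq c\,\diam 3R$. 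For each $v = v_{k,Q}$ with $k\geq 1$, let $\ell_{k,v}$ nearly achieve the infimum defining $\beta_p^{*,c}(\mu,Q)$, so that
\[
\beta_p(\mu,3R,\ell_{k,v})^2\,\min\{c,1\} \ \leq\ 2\,\beta_p^{*,c}(\mu,Q)^2
\]
for every $R\in\Delta^*(Q)$ with $\mu(3R)\geq c\,\diam 3R$.

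The key obstacle is converting this $L^p$-averaged flatness of $\ell_{k,v}$ into the pointwise estimate \eqref{e:alpha}. For $v'$ in the neighborhood specified there, one picks $R'\in\Delta^*(Q)$ containing $v'$ with $\side R' = 2^{-k}r_0$ (possible because $65C^\star\ll 1600\sqrt{n}$). The hypothesis $p\geq 1$ enters through an $L^p$-to-$L^\infty$ conversion of the type isolated in Lemma \ref{l:lerman}: if $d := \dist(v',\ell_{k,v})$ were too large, the uniform lower density at $v'\in E'$ would place a ball of $\mu$-mass of order $d$ inside $3R'$ at distance $\gtrsim d$ from $\ell_{k,v}$, inflating $\beta_p(\mu,3R',\ell_{k,v})$ beyond the bound above. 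Unwinding the inequality yields
\[
\dist(v',\ell_{k,v}) \ \lesssim_{n,c,p}\ \beta_p^{*,c}(\mu,Q)\cdot 2^{-k}r_0,
\]
so \eqref{e:alpha} holds with $\alpha_{k,v} := C(n,c,p)\,\beta_p^{*,c}(\mu,Q)$. This is where the convexity of $t\mapsto t^p$ is used in an essential way, and where $p<1$ would break the argument.

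Finally, to verify \eqref{e:asum}, the lower density bound gives $\mu(Q) \gtrsim_{n,c} \diam Q$ and the refinement (ii) gives $\mu(Q\cap E')\geq \tfrac{1}{2}\mu(Q)$ for $Q\in\mathcal{Q}_k$ at small scales, so $\diam Q \lesssim_{n,c} \mu(Q\cap E')\cdot \diam Q/\mu(Q)$. Since each $Q\in\mathcal{Q}_k$ contributes exactly one $v_{k,Q}$,
\[
\sum_{k\geq 1}\sum_{v\in V_k}\alpha_{k,v}^2\,2^{-k}r_0 \ \lesssim_{n,c,p}\ \sum_{Q}\beta_p^{*,c}(\mu,Q)^2\,\diam Q \ \lesssim_{n,c}\ \int_{E'}J^{*,c}_p(\mu,x)\,d\mu(x) \ \leq\ M\mu(E')<\infty,
\]
where $Q$ runs over the dyadic cubes of $\Delta_1(\RR^n)$ meeting $E'$. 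Proposition \ref{p:curve} then produces a compact connected $\Gamma\supseteq V=\lim V_k$ with $\Haus^1(\Gamma)<\infty$. Because the $V_k$ are $\sqrt{n}\,2^{-k}r_0$-dense in $E'\cap Q_0$, we have $V\supseteq \overline{E'\cap Q_0}$, and Lemma \ref{l:fund} makes $\Gamma$ a rectifiable curve containing $E'\cap Q_0$. Taking a countable union of such curves exhausts $E$ up to a $\mu$-null set.
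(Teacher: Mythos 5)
Your overall architecture (exhaust $E$ by compact pieces $E'$ with uniform lower density and uniformly bounded $J^{*,c}_p$, build nets $V_k$ from $E'$, feed them into Proposition \ref{p:curve}) parallels the paper's, and replacing the paper's localization lemma (Lemma \ref{l:localize}) by the uniformization (i)--(iii) is a legitimate way to verify \eqref{e:asum}. The fatal problem is the step you call the ``$L^p$-to-$L^\infty$ conversion.'' You take $V_k$ to consist of \emph{arbitrary} points $v'\in E'\cap Q$ and claim $\dist(v',\ell_{k,v})\lesssim_{n,c,p}\beta_p^{*,c}(\mu,Q)\,2^{-k}r_0$ because the lower density at $v'$ places mass $\gtrsim c\,d$ at distance $\gtrsim d:=\dist(v',\ell_{k,v})$ from the line. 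Unwinding that Chebyshev-type bound gives
\begin{equation*}
\beta_p(\mu,3R',\ell_{k,v})^p\ \gtrsim\ \Big(\frac{d}{\diam 3R'}\Big)^{p}\,\frac{c\,d}{\mu(3R')},
\end{equation*}
and to extract $d\lesssim\beta\,\diam 3R'$ you need $\mu(3R')\lesssim\diam 3R'$, i.e.\ an upper density bound — exactly the hypothesis this theorem must avoid, since it must handle measures with $\uD1(\mu,\cdot)=\infty$. Concretely, let $\ell'=\{y=0\}$, $\ell=\{y=\delta\}$ in $\RR^2$ and $\mu=M\,\Haus^1\res\ell'+\Haus^1\res\ell$: for $v'\in\ell$ one has $\lD1(\mu,v')\geq 1$ and $\dist(v',\ell')=\delta$, while at scales $2^{-k}\gg\delta$ every cube $3R$ near $v'$ contains both lines and $\beta_p(\mu,3R,\ell')\approx(\delta/2^{-k})M^{-1/p}$, so your inequality fails by a factor $M^{1/p}$. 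A small fraction of the total mass, even of definite lower density, simply cannot inflate a mass-normalized $L^p$ average. (Note also that a Chebyshev lower bound uses no convexity, so your stated reason for needing $p\geq1$ does not attach to the argument you are actually making — a symptom that the mechanism being invoked is not the one that works.)

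The paper's proof sidesteps this entirely: in Lemma \ref{l:draw} the sets $V_k$ are $2^{-k}r_0$-separated nets of the $\mu$-\emph{centers of mass} $z_{3Q}$ of the cubes $Q$ in a lower-regular tree, and Lemma \ref{l:lerman} (Jensen's inequality, which is where $p\geq1$ genuinely enters) gives $\dist(z_{3Q},\ell)\leq\beta_p(\mu,3Q,\ell)\diam 3Q$ with constant $1$ and \emph{no} density hypotheses; the lower regularity of the tree is used only to trade $\beta_p(\mu,3R,\ell)$ for $\beta_p^{*,c}(\mu,Q)$ at the cost of $\max\{c^{-1/2},1\}$. The leaves of the tree are still captured because $z_{3Q}\in 3Q$, so centers of mass of shrinking cubes containing $x$ converge to $x$. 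Two further, more minor issues with your write-up: picking one point per dyadic cube does not give the separation $(V_I)$ (points chosen in adjacent cubes can be arbitrarily close), so one must pass to a maximal separated subnet as the paper does; and your density refinement (ii) is stated for balls but applied to dyadic cubes, which requires the dyadic martingale version of the Lebesgue density theorem.
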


In order to construct rectifiable curves that capture measure in Theorem \ref{t:suf}, we will use Proposition \ref{p:curve} in conjunction with the following observation due to Lerman \cite{Lerman}. As stated, Lemma \ref{l:lerman} is a small variation of \cite[Lemma 6.4]{Lerman}.

\begin{lemma}\label{l:lerman} Let $n\geq 2$ and let $1\leq p<\infty$. Let $\mu$ be a Radon measure on $\RR^n$, let $E$ be a Borel set of positive diameter such that $0<\mu(E)<\infty$, and let $$z_E := \int_E z\,\frac{d\mu(z)}{\mu(E)}\in\RR^n$$ denote the center of mass of $E$ with respect to $\mu$. For every straight line $\ell$ in $\RR^n$, $$\dist(z_E,\ell) \leq \beta_p(\mu,E,\ell)\diam E.$$ \end{lemma}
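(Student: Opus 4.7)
The plan is to prove the inequality via two applications of Jensen's inequality, exploiting the fact that $\dist(\cdot,\ell)$ is the composition of a norm with an affine map. First I would rewrite the distance function in a form that interacts well with averaging. Fix a base point $p_0\in\ell$ and a unit direction vector $v$ of $\ell$, and let $L:\RR^n\to\RR^n$ denote the linear orthogonal projection onto $v^\perp$. Then the map $\pi^\perp_\ell(x):=L(x-p_0)$ is affine, and $\dist(x,\ell)=|\pi^\perp_\ell(x)|$ for every $x\in\RR^n$.

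Next I would use that affine maps commute with averages: since $z_E = \int_E x\, d\mu(x)/\mu(E)$ and $L$ is linear,
\begin{equation*}
\pi^\perp_\ell(z_E) = L\!\left(\int_E (x-p_0)\,\frac{d\mu(x)}{\mu(E)}\right) = \int_E L(x-p_0)\,\frac{d\mu(x)}{\mu(E)} = \int_E \pi^\perp_\ell(x)\,\frac{d\mu(x)}{\mu(E)}.
\end{equation*}
The triangle inequality (i.e.\ convexity of the Euclidean norm) then yields
\begin{equation*}
\dist(z_E,\ell) = |\pi^\perp_\ell(z_E)| \leq \int_E |\pi^\perp_\ell(x)|\,\frac{d\mu(x)}{\mu(E)} = \int_E \dist(x,\ell)\,\frac{d\mu(x)}{\mu(E)}.
\end{equation*}

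Finally, since $p\geq 1$ the function $t\mapsto t^p$ is convex on $[0,\infty)$, so Jensen's inequality (applied to the probability measure $d\mu/\mu(E)$ on $E$) gives
\begin{equation*}
\left(\int_E \dist(x,\ell)\,\frac{d\mu(x)}{\mu(E)}\right)^p \leq \int_E \dist(x,\ell)^p\,\frac{d\mu(x)}{\mu(E)} = \beta_p(\mu,E,\ell)^p\,(\diam E)^p.
\end{equation*}
Combining this with the previous display and taking $p$-th roots gives $\dist(z_E,\ell)\leq \beta_p(\mu,E,\ell)\diam E$, as desired. There is no real obstacle here; the only subtle point is the hypothesis $p\geq 1$, which is precisely what makes Jensen's inequality go in the required direction and is the source of the restriction $p\geq 1$ in Theorem~\ref{t:big} mentioned in the introduction.
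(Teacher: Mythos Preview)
Your proof is correct and follows essentially the same approach as the paper: both arguments rest on Jensen's inequality applied to the convex function $x\mapsto\dist(x,\ell)^p$. The only cosmetic difference is that the paper applies Jensen once, directly to the convex function $\dist(\cdot,\ell)^p$, whereas you split this into two steps (first the triangle inequality for $|\cdot|$, then Jensen for $t\mapsto t^p$).
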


\begin{proof} For every affine subspace $\ell$ in $\RR^n$, the function $\dist(\cdot,\ell)^p$ is convex provided that $1\leq p<\infty$. Thus, \begin{align*}
\dist(z_E,\ell)^p &=\dist\left(\int_E z\,\frac{d\mu(z)}{\mu(E)},\ell\right)^p\\
&\leq \int_E\dist(z,\ell)^p\,\frac{d\mu(z)}{\mu(E)} = \beta_p(\mu,E,\ell)^p (\diam E)^p\end{align*} by Jensen's inequality.\end{proof}

Let us define a \emph{tree of dyadic cubes} to be a set $\cT$ of dyadic cubes with unique maximal element $Q_0$ (ordered by inclusion) such that if $R\in\cT$, then $Q\in\cT$ for all dyadic cubes $R\subseteq Q\subseteq Q_0$. Denote $Q_0$ by $\Top(\cT)$. An \emph{infinite branch} of $\cT$ is defined to be a chain $Q_{0}\supseteq Q_{1}\supseteq Q_{2}\supseteq \dots$ of cubes in $\cT$ such that $\side Q_l = 2^{-l} \side Q_0$ for all $l\geq 0$. We define the set $\leaves(\cT)$ of \emph{leaves of $\cT$} to be $$\leaves(\cT) := \bigcup \left\{\bigcap_{i=0}^\infty \overline{Q_i}:Q_0\supseteq Q_1\supseteq Q_2\supseteq \cdots\text{ is an infinite branch of $\cT$}\right\}.$$

The following lemma is the heart of Theorem \ref{t:suf}.

\begin{lemma}[drawing rectifiable curves through the leaves of lower Ahlfors regular trees] \label{l:draw} Let $n\geq 2$, let $1\leq p<\infty$, and let $c>0$. Let $\mu$ be a Radon measure on $\RR^n$. If $\cT$ is a tree of dyadic cubes such that \begin{equation}\label{e:cond1} \frac{\mu(3Q)}{\diam 3Q}\geq c \quad\text{for all }Q\in\cT,\text{ and}\end{equation} \begin{equation}\label{e:cond2} S_p^{*,c}(\mu,\cT):=\sum_{Q\in\cT} \beta_p^{*,c}(\mu,Q)^2\diam Q <\infty,\end{equation} then there exists a rectifiable curve $\Gamma$ in $\RR^n$ such that $\Gamma\supseteq\leaves(\cT)$ and \begin{equation} \Haus^1(\Gamma)\lesssim_{n} \diam \Top(\cT)+\max\{c^{-1},1\}S_p^{*,c}(\mu,\cT).\end{equation}\end{lemma}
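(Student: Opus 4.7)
The strategy is to invoke Proposition \ref{p:curve} with point sets $V_k$ built from the $\mu$-centers of mass $z_Q:=\mu(3Q)^{-1}\int_{3Q}z\,d\mu(z)$ of the triples $3Q$ of cubes $Q\in\cT$ at generation $k$, translating the numbers $\beta_p^{*,c}(\mu,Q)$ into the tilt data $\alpha_{k,v}$ required by that proposition via Lemma \ref{l:lerman}. Let $Q_0:=\Top(\cT)$, write $s_0:=\side Q_0$, and set $r_0:=s_0$ and $x_0:=$ the center of $Q_0$. If $\cT$ has no infinite branch then $\leaves(\cT)=\emptyset$ and the conclusion is trivial; otherwise, I first restrict $\cT$ to the subtree of cubes through which some infinite branch passes. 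This preserves $\leaves(\cT)$ and guarantees that every $Q\in\cT$ has a child in $\cT$ and every non-maximal $Q\in\cT$ has its parent in $\cT$. Condition \eqref{e:cond1} ensures $\mu(3Q)>0$, so $z_Q$ is well-defined for each $Q\in\cT$.

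Set $\cT_k:=\{Q\in\cT:\side Q=2^{-k}s_0\}$ and let $V_k$ be a maximal $2^{-k}r_0$-separated subset of $\{z_Q:Q\in\cT_k\}$, so that $(V_I)$ is immediate. For $(V_{\two})$, given $v_k=z_Q\in V_k$, pick a child $Q'\in\cT_{k+1}$ of $Q$; then $z_Q,z_{Q'}\in 3Q$ yields $|z_Q-z_{Q'}|\le\diam 3Q\lesssim_n 2^{-k}r_0$, and maximality of $V_{k+1}$ provides $v_{k+1}\in V_{k+1}$ with $|v_{k+1}-z_{Q'}|<2^{-k-1}r_0$. Condition $(V_{\three})$ is symmetric, using $3Q\subseteq 3Q''$ for the parent $Q''$ of $Q$. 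All $V_k$ lie in $B(x_0,C^\star r_0)$ for a suitable $C^\star=C^\star(n)$. For each $Q\in\cT$ I pick a line $\ell_Q$ (nearly) realizing the infimum in the definition of $\beta_p^{*,c}(\mu,Q)$, and set $\ell_{k,v}:=\ell_Q$ when $v=z_Q$.

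The core computation verifies \eqref{e:alpha}. Any $v'\in(V_{k-1}\cup V_k)\cap B(v,65C^\star 2^{-k}r_0)$ has the form $v'=z_R$ with $R\in\cT$ of side $2^{-k}s_0$ or $2^{-k+1}s_0$. A direct half-side comparison--using $|z_R-v|\le 65C^\star 2^{-k}r_0$, $z_R\in 3R$, and $z_Q$ close to the center of $Q$--forces $3R\subseteq 1600\sqrt n\,Q$, provided $C^\star$ is chosen with enough slack below $1600\sqrt n$; hence $R\in\Delta^*(Q)$, and $\mu(3R)\ge c\diam 3R$ is just \eqref{e:cond1}, so $R$ is admissible in the definition of $\beta_p^{*,c}(\mu,Q)$. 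Lemma \ref{l:lerman} then gives
\[
\dist(z_R,\ell_Q)\le\beta_p(\mu,3R,\ell_Q)\diam 3R\lesssim_n\beta_p^{*,c}(\mu,Q)\,\min\{c,1\}^{-1/2}\,2^{-k}r_0,
\]
so I set $\alpha_{k,v}:=C(n)\,\beta_p^{*,c}(\mu,Q)\min\{c,1\}^{-1/2}$. Since distinct $v\in V_k$ correspond to distinct $Q\in\cT_k$ and $2^{-k}r_0\sim_n\diam Q$, using $\min\{c,1\}^{-1}=\max\{c^{-1},1\}$ one obtains
\[
\sum_{k\ge 1}\sum_{v\in V_k}\alpha_{k,v}^2\,2^{-k}r_0\lesssim_n\max\{c^{-1},1\}\sum_{Q\in\cT}\beta_p^{*,c}(\mu,Q)^2\diam Q=\max\{c^{-1},1\}\,S_p^{*,c}(\mu,\cT)<\infty
\]
by \eqref{e:cond2}. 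Proposition \ref{p:curve} now produces a compact connected $\Gamma\supseteq V=\lim V_k$ with the claimed length bound, and by Lemma \ref{l:fund} $\Gamma$ is a rectifiable curve. Finally, along any infinite branch $Q_0\supseteq Q_1\supseteq\cdots$ the inclusions $3Q_{i+1}\subseteq 3Q_i$ make $(z_{Q_i})$ Cauchy with limit the unique point $x^*\in\bigcap\overline{Q_i}$, and the representatives $v_i\in V_i$ with $|v_i-z_{Q_i}|<2^{-i}r_0$ also converge to $x^*$, giving $\leaves(\cT)\subseteq V\subseteq\Gamma$.

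The chief obstacle is the tension between separation property $(V_I)$ and the desire to place every $z_Q$, $Q\in\cT_k$, into $V_k$: two siblings in $\cT_k$ may have $\mu$-centers of mass arbitrarily close, so they cannot coexist in a $2^{-k}r_0$-separated set. The maximal-subset device sidesteps this by keeping only a representative in each cluster, and the Cauchy convergence of $(z_{Q_i})$ along branches ensures that the Hausdorff limit $V$ still captures every leaf. A secondary bookkeeping point is to fix $C^\star$ large enough to support $(V_{\two})$ and $(V_{\three})$ but so that $65C^\star$ stays comfortably below $1600\sqrt n$, making every nearby $R$ genuinely a member of $\Delta^*(Q)$.
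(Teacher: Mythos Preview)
Your proposal is correct and follows essentially the same approach as the paper's proof: build $V_k$ as a maximal $2^{-k}r_0$-separated subset of the $\mu$-centers of mass of $\{3Q:Q\in\cT_k\}$, verify $(V_I)$--$(V_\three)$, choose $\ell_{k,v}$ nearly realizing $\beta_p^{*,c}(\mu,Q_{k,v})$, use Lemma~\ref{l:lerman} to bound $\dist(z_R,\ell_{k,v})$ for nearby $R\in\Delta^*(Q_{k,v})$, and then invoke Proposition~\ref{p:curve}. The only cosmetic difference is that the paper normalizes $\Top(\cT)=[0,1)^n$ and takes $r_0=3\sqrt{n}$, $C^\star=4$, whereas you take $r_0=\side Q_0$ and leave $C^\star=C^\star(n)$ implicit; your closing remarks about balancing $C^\star$ against the $1600\sqrt{n}$ window correctly identify the constraint that makes both choices work.
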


\begin{proof} Applying a dilation and a translation, we may assume without loss of generality that $\Top(\cT)=[0,1)^n$. By deleting irrelevant cubes from $\cT$, we may also assume without loss of generality that every cube $Q\in\cT$ belongs to an infinite branch of $\cT$. To proceed, we will aim to use Proposition \ref{p:curve}. Set parameters $$C^\star=4\quad\text{and}\quad r_0=3\diam \Top(\cT)=\diam [-1,2)^n=3\sqrt{n}.$$ Below we will freely use the fact that $\diam 3Q= r_0\side Q$ for all $Q\in\cT$.

For each $Q\in\cT$, let $z_{3Q}$ denote the $\mu$ center of mass of $3Q$, i.e. $$z_{3Q}=\int_{3Q} z\,\frac{d\mu(z)}{\mu(3Q)}.$$ For each $k\geq 0$, let $Z_k= \{z_{3Q}:Q\in\cT\text{ and }\side Q= 2^{-k}\}$ and choose $V_k$ to be any maximal $2^{-k}r_0$-separated subset of $Z_k$. Pick any $x_0\in 3Q_0$. Then $$V_k\subseteq Z_k\subseteq 3\Top(\cT)\subseteq B(x_0,r_0)\subseteq B(x_0,C^\star r_0).$$ Also note that $V_k$ satisfies condition $(V_I)$ of Proposition \ref{p:curve} by the definition of $V_k$.

To check  condition $(V_\two)$, let $k\geq 0$ and let $v\in V_k$, say $v=z_{3Q}$ for some $Q\in\cT$ with $\side Q= 2^{-k}$. Recall that by assumption every cube in $\cT$ belongs to an infinite branch of $\cT$. Hence there exists $R\in\cT$ such that $R\subseteq Q$ and $\side R=\frac12\side Q$. By maximality, there exists $v'=z_{3P}\in V_{k+1}$ for some $P\in \cT$ such that $\side P=\side R$ and $|z_{3R}-v'|< 2^{-(k+1)}r_0$. It follows that $$|v'-v| < |v'-z_{3R}|+|z_{3R}-v| \leq 2^{-(k+1)}r_0+\diam 3Q = \frac32\cdot 2^{-k}r_0\leq C^\star 2^{-k}r_0.$$ Thus, condition $(V_\two)$ is satisfied.

To check condition $(V_\three)$, let $k\geq 1$ and let $v\in V_k$, say $v=z_{3Q}$ for some $Q\in\cT$ with $\side Q= 2^{-k}$. Let $R$ denote the parent of $Q$, which necessarily belongs to $\cT$. By maximality, there exists $v'=z_{3P}\in V_{k-1}$ for some $P\in\cT$ such that $\side P=\side R$ and $|z_{3R}-v'|< 2^{-(k-1)}r_0$. It follows that $$|v-v'| < |v'-z_{3R}|+|z_{3R}-v| \leq 2^{-(k-1)}r_0+\diam 3R\leq 4\cdot 2^{-k}r_0\leq C^\star 2^{-k}r_0.$$ Thus, condition ($V_\three$) is satisfied.

Now, for each $k\geq 0$ and $v\in V_k$, let $Q_{k,v}\in\cT$ denote a dyadic cube of side length $2^{-k}$ such that $v=z_{3Q_{k,v}}$. Next, we will choose lines $\ell_{k,v}$ in $\RR^n$ and numbers $\alpha_{k,v}\geq 0$ to use with Proposition \ref{p:curve}. Let $k\geq 1$ and $v\in V_k$. By definition of $\beta^{*,c}_p(\mu,Q_{k,v})$, we can choose a line $\ell_{k,v}$ such that \begin{equation}\begin{split}\label{e:good-line}
\max\{\beta_p(\mu,3R,\ell_{k,v}):R\in \Delta^*(Q)\text{ and }\mu(3R)\geq c\diam 3R\}\\ \leq 2\max\{c^{-1/2},1\}\beta_p^{*,c}(\mu,Q_{k,v}).\end{split}\end{equation}
Note that if $x\in V_{j}\cap B(v,65C^\star 2^{-k}r_0)$ for $j=k$ or $j=k-1$, then $\mu(3Q_{j,x})\geq c\diam 3Q_{j,x}$ (since $Q_{j,x}\in\cT$) and $3Q_{j,x}\subseteq 1600\sqrt{n} Q_{k,v}$ (with room to spare). By Lemma \ref{l:lerman} and \eqref{e:good-line}, it follows that for any $x\in V_j\cap B(v,65 C^\star 2^{-k}r_0)$ with $j=k$ or $j=k-1$, \begin{equation*}\begin{split}\dist(x,\ell_{k,v}) &\leq \beta_p(\mu,3Q_{j,x},\ell_{k,v})\diam 3Q_{j,x}\\ &\leq 4\max\{c^{-1/2},1\} \beta_p^{*,c}(\mu,Q_{k,v})\cdot 2^{-k}r_0=:\alpha_{k,v} 2^{-k}r_0.\end{split}\end{equation*} Therefore, the lines $\ell_{k,v}$ and numbers $\alpha_{k,v}$ satisfy \eqref{e:alpha}. Furthermore, \begin{equation*}\begin{split}\sum_{k=1}^\infty \sum_{v\in V_k} \alpha_{k,v}^2 2^{-k}r_0&\leq 48\max\{c^{-1},1\}\sum_{Q\in\cT}\beta_{p}^{*,c}(\mu,Q)^2 \diam Q\\ &\lesssim \max\{c^{-1},1\}S_p^{*,c}(\mu,\cT)<\infty.\end{split}\end{equation*} Thus, \eqref{e:asum} holds, as well.

By Proposition \ref{p:curve}, there exists a connected, compact set $\Gamma\subseteq \RR^n$ such that $$
 \Haus^1(\Gamma) \lesssim_{n,C^\star} r_0+ \sum_{k=1}^\infty\sum_{v\in V_k} \alpha_{k,v}^2 2^{-k}r_0 \lesssim_n \diam \Top(\cT) + \max\{c^{-1},1\} S_p^{*,c}(\mu,\cT)$$ and $\Gamma \supseteq V=\lim_{k\rightarrow\infty} V_k$. By Lemma \ref{l:fund}, $\Gamma$ is a rectifiable curve. It remains to check that $\Gamma\supseteq\leaves(\cT)$. Let $y\in\leaves(\cT)$, say $y=\lim_{k\rightarrow\infty} y_k$ for some sequence of points $y_k\in \overline{Q_k}$, for some infinite branch $Q_0\supseteq Q_1\supseteq Q_2\supseteq\dots$ of $\cT$. Let $z_k=z_{3Q_k}$ denote the center of mass of $3Q_k$ and let $v_k\in V_k$ be any point which minimizes the distance to $z_k$. On one hand, $|y_k-z_k|\leq 2^{-k}r_0=\diam 3Q_k$, since $y_k,z_k\in 3Q_k$. On the other hand, $|z_k-v_k|\leq 2^{-k}r_0$ by maximality of $V_k$ in $Z_k$. Thus, by the triangle inequality, $$|v_k-y| \leq |v_k-z_k|+|z_k-y_k|+|y_k-y|\leq 2r_0\cdot 2^{-k}+|y_k-y|\quad\text{for all }k \geq 0,$$ whence $y=\lim_{k\rightarrow\infty} v_k\in\lim_{k\rightarrow\infty} V_k\subseteq \Gamma$. Therefore, since $y\in\leaves(\cT)$ was arbitrary, $\Gamma\supseteq \leaves(\cT)$.\end{proof}

The specialization to trees of lower Ahlfors regular cubes in the previous lemma can be avoided by making an assumption on the behavior of the measure in all nearby cubes. Recall from the introduction that $\beta^{**}_p(\mu,Q)=\inf_\ell\max_{R\in \Delta^*(Q)}\beta_p(\mu,3R,\ell)$.

\begin{lemma}[drawing rectifiable curves through the leaves of a tree]\label{l:draw-big} Let $n\geq 2$, let $1\leq p<\infty$, and let $\mu$ be a Radon measure on $\RR^n$. If $\cT$ is a tree of dyadic cubes such that \begin{equation}\label{e:cond2-big} S_p^{**}(\mu,\cT):=\sum_{Q\in\cT} \beta_p^{**}(\mu,Q)^2\diam Q <\infty,\end{equation} then there exists a rectifiable curve $\Gamma$ in $\RR^n$ such that $\Gamma\supseteq\leaves(\cT)$ and \begin{equation} \Haus^1(\Gamma)\lesssim_{n} \diam \Top(\cT)+S_p^{**}(\mu,\cT).\end{equation}\end{lemma}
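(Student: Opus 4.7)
The strategy is to mirror the proof of Lemma \ref{l:draw} essentially verbatim, replacing $\beta_p^{*,c}(\mu,Q)$ by $\beta_p^{**}(\mu,Q)$ throughout. The key simplification is that $\beta_p^{**}$ has no $\min\{c,1\}^{1/2}$ weighting, so the factor $\max\{c^{-1/2},1\}$ that appeared in the distance estimate in Lemma \ref{l:draw} simply disappears. In particular, no lower Ahlfors regularity assumption is needed on $\cT$ itself; the requisite control of the measure ``in all nearby cubes'' is instead packaged into $\beta_p^{**}$ via the maximum over $\Delta^*(Q)$.

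After translating and dilating assume $\Top(\cT)=[0,1)^n$, so that $r_0:=3\diam \Top(\cT) = 3\sqrt{n}$, and set $C^\star=4$. Discard cubes of $\cT$ that do not lie in any infinite branch, and pass to the subtree where $\mu(3Q)>0$ (leaves of the discarded part lie in $\mu$-null regions and can be appended to $\Gamma$ without affecting the length bound, since the ambient set $\overline{B(x_0,C^\star r_0)}$ has diameter comparable to $r_0$). For each $Q\in\cT$ with $\side Q=2^{-k}$, let $z_{3Q}$ denote the $\mu$-center of mass of $3Q$, and let $V_k$ be a maximal $2^{-k}r_0$-separated subset of $\{z_{3Q}:\side Q=2^{-k}\}$. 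Conditions $(V_I)$, $(V_\two)$, $(V_\three)$ of Proposition \ref{p:curve} hold by exactly the same verification as in the proof of Lemma \ref{l:draw}: $(V_I)$ is the definition of $V_k$; for $(V_\two)$ and $(V_\three)$ one uses that every cube in $\cT$ has children and a parent in $\cT$, together with the trivial bound $|z_{3R}-z_{3Q}|\leq \diam 3Q$ whenever $R\subseteq Q$.

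For each $k\geq 1$ and $v=z_{3Q_{k,v}}\in V_k$, choose a line $\ell_{k,v}$ with
\[\max\{\beta_p(\mu,3R,\ell_{k,v}):R\in \Delta^*(Q_{k,v})\}\leq 2\,\beta_p^{**}(\mu,Q_{k,v}).\]
If $x=z_{3Q_{j,x}}\in (V_{k-1}\cup V_k)\cap B(v,65 C^\star 2^{-k}r_0)$ with $j\in\{k-1,k\}$, then the geometry forces $Q_{j,x}\in\Delta^*(Q_{k,v})$ (this is the same containment verified in Lemma \ref{l:draw} and is where the constant $1600\sqrt{n}$ in the definition of $\Delta^*$ is used), so Lemma \ref{l:lerman} gives
\[\dist(x,\ell_{k,v})\leq \beta_p(\mu,3Q_{j,x},\ell_{k,v})\diam 3Q_{j,x}\leq 4\,\beta_p^{**}(\mu,Q_{k,v})\,2^{-k}r_0=:\alpha_{k,v}\,2^{-k}r_0.\]
Hence \eqref{e:alpha} and, via bounded overlap of the assignment $(k,v)\mapsto Q_{k,v}$, \eqref{e:asum} both hold, with $\sum_{k,v}\alpha_{k,v}^2 2^{-k}r_0\lesssim_n S_p^{**}(\mu,\cT)$. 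Proposition \ref{p:curve} together with Lemma \ref{l:fund} then yields a rectifiable curve $\Gamma$ with the claimed length bound, and $\Gamma\supseteq\leaves(\cT)$ by the same triangle-inequality argument as in Lemma \ref{l:draw}: along any infinite branch, $z_{3Q_k}$ lies within $2^{-k}r_0$ of both the corresponding leaf and some $v_k\in V_k$, forcing the leaf into $\lim V_k\subseteq\Gamma$.

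The only genuinely new wrinkle, and hence the main obstacle, is conceptual rather than technical: dealing with cubes $Q\in\cT$ for which $\mu(3Q)=0$, where the center of mass is undefined. This is handled by the preliminary reduction to the subtree $\cT^+=\{Q\in\cT:\mu(3Q)>0\}$; the argument above produces a curve covering $\leaves(\cT^+)$, and any leaves of $\cT$ not reached in this way sit inside cubes $Q$ with $\mu(3Q)=0$, which can be connected to $\Gamma$ by short additional segments of total length $\lesssim \diam\Top(\cT)$ (say, by appending a single spanning tree through the centers of the top-level null cubes). Once this reduction is in place, the rest of the proof is a direct transcription of Lemma \ref{l:draw} with $c^{-1/2}$ set to $1$.
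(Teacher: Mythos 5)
Your proof is essentially the paper's: the paper's entire argument for Lemma \ref{l:draw-big} is ``follow the proof of Lemma \ref{l:draw}, but choose $\ell_{k,v}$ so that $\max\{\beta_p(\mu,3R,\ell_{k,v}):R\in\Delta^*(Q_{k,v})\}\leq 2\beta_p^{**}(\mu,Q_{k,v})$,'' with details left to the reader, and your transcription (dropping the $\max\{c^{-1/2},1\}$ factor, keeping the center-of-mass nets, the $(V_I)$--$(V_\three)$ verification, Lemma \ref{l:lerman}, bounded overlap, and Proposition \ref{p:curve}) is exactly that.

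The one place you go beyond the paper is the treatment of cubes with $\mu(3Q)=0$, and there your patch does not actually work: if the null part of $\cT$ is large (in the extreme case $\mu=0$ and $\cT$ the full dyadic tree below $Q_0$, so that $S^{**}_p(\mu,\cT)=0$ but $\leaves(\cT)=\overline{Q_0}$), the leftover leaves form a set of positive $\Haus^n$ measure and cannot be absorbed by segments of total length $\lesssim\diam\Top(\cT)$, nor by any rectifiable curve. This is really an imprecision in the lemma's statement rather than in your argument --- the paper silently assumes, and in its only application (Corollary \ref{c:suf}, where $\cT_{Q_0}$ consists precisely of the cubes with $\mu(3Q)>0$) arranges, that every cube of $\cT$ carries positive measure, so the centers of mass are always defined. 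The clean fix is to add that hypothesis (or to replace $\leaves(\cT)$ by $\leaves(\cT^+)$ in the conclusion), not to try to recover the null leaves.
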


\begin{proof} The proof follows the same pattern as the proof of Lemma \ref{l:draw}. However, instead of choosing $\ell_{k,v}$ according to \eqref{e:good-line}, one uses the definition of $\beta_p^{**}(\mu,Q)$ to choose a line $\ell_{k,v}$ such that
\begin{equation}\label{e:good-line-big}\max\{\beta_p(\mu,3R,\ell_{k,v}):R\in \Delta^*(Q)\}\\ \leq 2\max\beta_p^{**}(\mu,Q_{k,v}).\end{equation} We leave the details to the reader.  \end{proof}

\begin{corollary}\label{c:suf} Let $1\leq p<\infty$ and let $\mu$ be a finite measure on $\RR^n$ with bounded support. If $S^{**}_p(\mu)=\sum_{Q\in\Delta(\RR^n)}\beta^{**}_p(\mu,Q)^2\diam Q<\infty$, then there exists a rectifiable curve $\Gamma$ in $\RR^n$ such that $\mu(\RR^n\setminus\Gamma)=0$ and $\Haus^1(\Gamma)\lesssim_n \diam\spt\mu + S^{**}_p(\mu).$  \end{corollary}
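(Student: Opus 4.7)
The plan is to deduce Corollary \ref{c:suf} directly from Lemma \ref{l:draw-big} by applying it to appropriately chosen trees of dyadic sub-cubes that together cover $\spt\mu$. The one mild obstacle is that $\spt\mu$ need not lie inside any single dyadic cube of $\Delta(\RR^n)$, since it may straddle dyadic hyperplanes in every coordinate direction, so I first cover it by a controlled finite family of dyadic cubes of comparable size. Assume $r_0:=\diam\spt\mu>0$ (otherwise $\mu$ is a single point mass and any short segment through that point suffices). Choose $k_0\in\ZZ$ with $r_0\leq 2^{k_0}<2r_0$. Because the coordinate bounding box of $\spt\mu$ has every side length at most $r_0\leq 2^{k_0}$, it crosses at most one dyadic hyperplane in each coordinate direction, so $\spt\mu$ is covered by a family $Q_1,\dots,Q_m$ of at most $m\leq 2^n$ dyadic cubes of side length $2^{k_0}$, each satisfying $\diam Q_i=\sqrt{n}\,2^{k_0}\lesssim_n r_0$.

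For each $i$, let $\cT_i:=\{Q\in\Delta(\RR^n):Q\subseteq Q_i\}$. This is a tree of dyadic cubes with $\Top(\cT_i)=Q_i$, and a short argument with nested dyadic sub-cubes shows that $\leaves(\cT_i)=\overline{Q_i}$ (every point of $\overline{Q_i}$ is realised as the limit of a nested sequence of closed dyadic sub-cubes in $\cT_i$). Since $\cT_i\subseteq\Delta(\RR^n)$,
$$S^{**}_p(\mu,\cT_i) \;=\; \sum_{Q\in\cT_i}\beta^{**}_p(\mu,Q)^2\diam Q \;\leq\; S^{**}_p(\mu) \;<\; \infty,$$
and Lemma \ref{l:draw-big} then produces a rectifiable curve $\Gamma_i\subseteq\RR^n$ with $\Gamma_i\supseteq\overline{Q_i}$ and
$$\Haus^1(\Gamma_i) \;\lesssim_n\; \diam Q_i + S^{**}_p(\mu,\cT_i) \;\lesssim_n\; r_0 + S^{**}_p(\mu).$$

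To stitch the pieces into a single curve, fix a point $p_1\in\overline{Q_1}$, choose $p_i\in\overline{Q_i}$ for $i=2,\dots,m$, and let $\sigma_i$ be the straight segment from $p_1$ to $p_i$. Since $\bigcup_j\overline{Q_j}$ has diameter $\lesssim_n r_0$, each $\sigma_i$ has length $\lesssim_n r_0$. The set $\Gamma:=\bigcup_{i=1}^m\Gamma_i\cup\bigcup_{i=2}^m\sigma_i$ is a finite union of compact connected sets chained through $p_1$ (via $p_1\in\Gamma_1$, $p_1\in\sigma_i$, and $p_i\in\sigma_i\cap\Gamma_i$), hence compact and connected, and
$$\Haus^1(\Gamma) \;\leq\; \sum_{i=1}^m\Haus^1(\Gamma_i) + \sum_{i=2}^m\Haus^1(\sigma_i) \;\lesssim_n\; r_0 + S^{**}_p(\mu),$$
since $m\leq 2^n$ depends only on $n$. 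By Lemma \ref{l:fund}, $\Gamma$ is a rectifiable curve, and $\Gamma\supseteq\bigcup_i\overline{Q_i}\supseteq\spt\mu$ forces $\mu(\RR^n\setminus\Gamma)=0$. Beyond the dyadic-cover bookkeeping flagged in the first paragraph, no genuine obstacle remains: Lemma \ref{l:draw-big} supplies all of the essential content.
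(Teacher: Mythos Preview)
Your argument is essentially the paper's own: cover $\spt\mu$ by boundedly many top dyadic cubes of side $\sim\diam\spt\mu$, apply Lemma \ref{l:draw-big} to the tree of sub-cubes under each, and stitch the resulting curves together with short segments. The one technical slip is that you take $\cT_i$ to be \emph{all} dyadic sub-cubes of $Q_i$, whereas the paper takes $\cT_{Q_0}=\{Q\subseteq Q_0:\mu(3Q)>0\}$; the proof of Lemma \ref{l:draw-big} (which follows Lemma \ref{l:draw}) builds the sets $V_k$ out of $\mu$-centers of mass $z_{3Q}$ and invokes Lemma \ref{l:lerman}, so it tacitly needs $\mu(3Q)>0$ for each $Q\in\cT$. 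Restricting your trees accordingly gives $\leaves(\cT_i)=\overline{Q_i}\cap\spt\mu$ (still covering $\spt\mu$) and your argument goes through unchanged.
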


\begin{proof} Let $\mathcal{Q}$ denote the set of maximal cubes $Q\in\Delta(\RR^n)$ such that $\mu(3Q)>0$ and $\side Q \leq \diam \spt\mu$. Note that $\#\mathcal{Q}\lesssim_n 1$. For each $Q_0\in\mathcal{Q}$, define $$\cT_{Q_0}:=\{Q\in\Delta(\RR^n):Q\subseteq Q_0\text{ and } \mu(3Q)>0\}.$$ Then $\cT_{Q_0}$ is a tree of dyadic cubes with $\leaves(\cT_{Q_0})=\overline{Q_0}\cap \spt\mu$ and $S^{**}_p(\mu,\cT_{Q_0})\leq S^{**}_p(\mu)<\infty$. By Lemma \ref{l:draw-big}, there is a rectifiable curve $\Gamma_{Q_0}$ in $\RR^n$ such that $\Gamma_{Q_0}\supseteq \overline{Q_0}\cap \spt\mu$ and \begin{equation*}\Haus^1(\Gamma_{Q_0}) \lesssim_n \diam\Top(\cT_{Q_0})+ S^{**}_p(\mu,\cT_{Q_0}) \lesssim_n \diam \spt\mu +  S^{**}_p(\mu).\end{equation*} Because $\spt\mu \subseteq \bigcup_{Q_0\in \mathcal{Q}} \left(\overline{Q_0}\cap\spt\mu\right)$ and  $\#\mathcal{Q}\lesssim_n 1$, we can find a rectifiable curve $\Gamma$ in $\RR^n$ such that $\Gamma\supseteq\bigcup_{Q_0\in\mathcal{Q}}\Gamma_{Q_0}\supseteq \spt\mu$ and $$\Haus^1(\Gamma) \lesssim_n \diam \spt\mu + \sum_{Q_0\in\mathcal{Q}}\Haus^1(\Gamma_{Q_0})\lesssim_n \diam \spt\mu + S^{**}_p(\mu).$$ Finally, note that $\mu(\RR^n\setminus \Gamma)\leq \mu(\RR^n\setminus\spt\mu)=0$.
\end{proof}

Next, we state and prove a localization lemma for measure-normalized sums over trees of dyadic cubes, which is modeled on \cite[Lemma 3.2]{BS2}. Let $\cT$ be a tree of dyadic cubes and let $b:\cT\rightarrow[0,\infty)$. For each Radon measure $\mu$ on $\RR^n$, define the \emph{$\mu$-normalized sum function} $S_{\cT,b}(\mu,x)$ by $$S_{\cT,b}(\mu,x):=\sum_{Q\in\cT} \frac{b(Q)}{\mu(Q)}\chi_Q(x)\in[0,\infty]\quad\text{for all }x\in\RR^n,$$ where we interpret $0/0=0$ and $1/0=\infty$.

\begin{lemma}[localization lemma] \label{l:localize} Let $\mathcal{T}$ be a tree of dyadic cubes, let $b:\mathcal{T}\rightarrow[0,\infty)$, and let $\mu$ be a Radon measure on $\RR^n$. For all $N<\infty$ and $\varepsilon>0$, there exist a partition of $\cT$ into a set $\good(\cT,N,\varepsilon)$ of \emph{good cubes} and a set $\bad(\cT,N,\varepsilon)$ of \emph{bad cubes} with the following the properties. \begin{enumerate}
\item Either $\good(\cT,N,\varepsilon)=\emptyset$ or $\good(\cT,N,\varepsilon)$ is a tree of dyadic cubes with $$\Top(\good(\cT,N,\varepsilon))=\Top(\cT).$$
\item Every child of a bad cube is a bad cube: if $Q$ and $R$ belong to $\cT$, $R\in \bad(\cT,N,\varepsilon)$, and $Q\subseteq R$, then $Q\in\bad(\cT,N,\varepsilon)$.
\item The sets $A:=\{x\in \Top(\cT): S_{\cT,b}(\mu,x)\leq N\}$ and  $$A':=A \setminus \bigcup_{Q\in\bad(\cT,N,\varepsilon)}Q=A\cap \leaves(\good(\cT,N,\varepsilon))$$ have comparable measure: $\mu(A')\geq (1-\varepsilon\mu(\Top(\cT)))\mu(A).$
\item The sum of the function $b$ over good cubes is finite: $$\sum_{Q\in\good(\cT,N,\varepsilon)} b(Q) < N/\varepsilon.$$
\end{enumerate}
\end{lemma}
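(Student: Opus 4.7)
The plan is to run a Calder\'on--Zygmund-style stopping-time construction, declaring a cube to be stopping when the relative $\mu$-measure of $A$ inside it is small, and letting the bad cubes be all descendants of the stopping cubes.

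First dispose of the degenerate cases. If $\mu(A)=0$ or $\varepsilon\mu(\Top(\mathcal{T}))\geq 1$, set $\good(\mathcal{T},N,\varepsilon)=\emptyset$ and $\bad(\mathcal{T},N,\varepsilon)=\mathcal{T}$: condition (3) is then vacuous and the remaining conditions are trivial. Otherwise $\mu(A)>0$ and $\varepsilon\mu(\Top(\mathcal{T}))<1$, and I define
$$\mathcal{S}:=\{R\in\mathcal{T}\setminus\{\Top(\mathcal{T})\}: R\text{ is maximal in }\mathcal{T}\text{ with }\mu(A\cap R)\leq \varepsilon\mu(A)\mu(R)\}.$$
Let $\bad(\mathcal{T},N,\varepsilon):=\{Q\in\mathcal{T}: Q\subseteq S\text{ for some }S\in\mathcal{S}\}$ and $\good(\mathcal{T},N,\varepsilon):=\mathcal{T}\setminus\bad(\mathcal{T},N,\varepsilon)$. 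Property (2) is immediate from the construction, and the assumption $\varepsilon\mu(\Top(\mathcal{T}))<1$ prevents $\Top(\mathcal{T})$ from satisfying the stopping inequality, so $\Top(\mathcal{T})\in\good$; since $\mathcal{T}$ is closed under taking ancestors, this yields (1).

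For (3), the cubes in $\mathcal{S}$ are pairwise disjoint by maximality, so
$$\mu\Bigl(A\cap\bigcup_{Q\in\bad(\mathcal{T},N,\varepsilon)}Q\Bigr) = \sum_{S\in\mathcal{S}}\mu(A\cap S) \leq \varepsilon\mu(A)\sum_{S\in\mathcal{S}}\mu(S) \leq \varepsilon\mu(A)\mu(\Top(\mathcal{T})),$$
which is the asserted measure comparison. The set identity $A\setminus\bigcup_{Q\in\bad}Q = A\cap\leaves(\good)$ is where the half-open cube convention (imposed in the definition of $J^*_p$ precisely for this reason) enters: each $x\in \Top(\mathcal{T})$ lies in a unique decreasing sequence of half-open dyadic subcubes of $\Top(\mathcal{T})$, and this sequence is an infinite branch of $\good$ if and only if $x$ avoids every bad cube.

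Finally, for (4), by maximality of the stopping family, together with a direct check at $Q=\Top(\mathcal{T})$ using $\varepsilon\mu(\Top(\mathcal{T}))<1$, every $Q\in\good$ satisfies $\mu(A\cap Q)>\varepsilon\mu(A)\mu(Q)$. Hence
$$\sum_{Q\in\good}b(Q) = \sum_{Q\in\good}\frac{b(Q)}{\mu(Q)}\mu(Q) \leq \frac{1}{\varepsilon\mu(A)}\int_A\sum_{Q\in\good}\frac{b(Q)}{\mu(Q)}\chi_Q(x)\,d\mu(x) \leq \frac{N}{\varepsilon},$$
the last step using the pointwise bound $S_{\mathcal{T},b}(\mu,\cdot)\leq N$ on $A$. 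I expect the main technical obstacle to be the set-theoretic identity in (3) involving $\leaves(\good)$; everything else is routine bookkeeping once the stopping family is defined.
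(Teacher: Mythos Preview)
Your proposal is correct and takes essentially the same approach as the paper: declare a cube bad when some ancestor $R$ in $\cT$ (possibly itself) satisfies $\mu(A\cap R)\leq\varepsilon\mu(A)\mu(R)$, and then run exactly the same estimates for (3) and (4). The only cosmetic difference is that the paper does not split off the case $\varepsilon\mu(\Top(\cT))\geq 1$, since in that case $\Top(\cT)$ itself satisfies the stopping inequality and the construction automatically yields $\good=\emptyset$.
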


\begin{proof} Suppose that $\cT$, $\mu$, $b$, $N$, $\varepsilon$, $A$, and $A'$  are given as above. If $\mu(A)=0$, then we may declare every dyadic cube $Q\in\cT$ to be a bad cube and the conclusion of the lemma holds trivially. Thus, suppose that $\mu(A)>0$. Declare that a dyadic cube $Q\in\cT$ is a \emph{bad cube} if there exists a dyadic cube $R\in\cT$ such that $Q\subseteq R$ and $\mu(A\cap R)\leq \varepsilon \mu(A)\mu(R)$. We call a dyadic cube $Q\in\cT$ a \emph{good cube} if $Q$ is not a bad cube. Properties (1) and (2) are immediate. To check property (3),
observe that
\begin{equation*}\begin{split} \mu(A\setminus A') &\leq \sum_{\text{maximal }Q\in\bad(\cT,N,\varepsilon)} \mu(A\cap Q)\\ &\leq \varepsilon\mu(A)\sum_{\text{maximal }Q\in\bad(\cT,N,\varepsilon)} \mu(Q) \leq \varepsilon\mu(A)\mu(\Top(\cT)),\end{split}\end{equation*} where the last inequality follows because the maximal bad cubes are pairwise disjoint. Let us emphasize that this uses our assumption that $\cT$ is composed of half open cubes. It follows that $$\mu(A')=\mu(A)-\mu(A\setminus A')\geq  (1-\varepsilon\mu(\Top(\cT)))\mu(A).$$ Thus, property (3) holds. Finally, since $S_{\cT,b}(\mu,x)\leq N$ for all $x\in A$, $$N\mu(A)
 \geq \int_A S_{\cT,b}(\mu,x)\,d\mu(x) \geq \sum_{Q\in\cT} b(Q)\, \frac{\mu(A\cap Q)}{\mu(Q)}> \varepsilon\mu(A)\sum_{Q\in\good(\cT,N,\varepsilon)} b(Q),$$ where we interpret $\mu(A\cap Q)/\mu(Q)=0$ if $\mu(Q)=0$. Because $\mu(A)>0$, it follows that $$\sum_{Q\in\good(\cT,N,\varepsilon)} b(Q)<\frac{N}{\varepsilon}.$$ This verifies property (4).
\end{proof}

\begin{remark}\label{rk:star} When $\cT\subseteq \{Q\in\Delta_1(\RR^n):Q\subseteq Q_0\}$ for some dyadic cube $Q_0$ of side length 1 and $b(Q)\equiv \beta_p^{*,c}(\mu,Q)^2\diam Q$, the function $S_{\cT,b}(\mu,x)\leq J_p^{*,c}(\mu,x)$ for all $x\in Q_0$. \end{remark}

We now have all the ingredients required to prove Theorem \ref{t:suf}.

\begin{proof}[Proof of Theorem \ref{t:suf}] Let $\mu$ be a Radon measure on $\RR^n$, let $1\leq p<\infty$, and let $c>0$. Our goal is to prove that the measure $$\mu\res\{x\in\RR^n: \lD1(\mu,x)>(3/2)\sqrt{n}\cdot c\text{ and }J^{*,c}_p(\mu,x)<\infty\}$$ is 1-rectifiable. For every $x\in \RR^n$ such that $\lD1(\mu,x)>(3/2)\sqrt{n}\cdot c$, there exists a radius $r_x>0$ such that $\mu(B(x,r))\geq 3\sqrt{n}\cdot cr$ for all $r\leq r_x$, because $\lD1(\mu,x)=\liminf_{r\rightarrow 0} \mu(B(x,r))/2r$. Hence $$\mu(3Q) \geq \mu(B(x,\side Q)) \geq 3\sqrt{n}\cdot c\side Q = c\diam 3Q$$ whenever $\lD1(\mu,x)>(3/2)\sqrt{n}\cdot c$ and $Q$ is a dyadic cube such that $x\in Q$ and $\side Q\leq r_x$. Thus, if $\lD1(\mu,x)>(3/2)\sqrt{n}\cdot c$, then $x$ belongs to the leaves of the tree \begin{equation*}\begin{split}
\cT_x:=\Big\{Q\in\Delta_1(\RR^n):\ &Q\subseteq Q_x\text{ and }\\
&\mu(3R)\geq c\diam 3R\text{ for all $R\in\Delta(\RR^n)$ such that }Q\subseteq R\subseteq Q_x \Big\},\end{split}\end{equation*} where $Q_x$ is defined to be the maximal dyadic cube containing $x$ with $\side Q_x\leq\min\{r_x,1\}.$ In fact, note that $x\in\Top(\cT_x)\cap \leaves(\cT_x)$ and the tree $\cT_x$ satisfies condition \eqref{e:cond1} of Lemma \ref{l:draw}. Because each tree  $\cT_x$ is determined by $Q_x$ and $\Delta_1(\RR^n)$ is countable, the collection $\{\cT_x:\lD1(\mu,x)>(3/2)\sqrt{n}\cdot c\}$ of trees is enumerable, say $$\{\cT_x:\lD1(\mu,x)>(3/2)\sqrt{n}\cdot c\}=\{\cT_{x_i}:i=1,2,\dots\}$$ for some sequence of points such that $\lD1(\mu,x_i)>(3/2)\sqrt{n}\cdot c$ for all $i\geq 1$.
Therefore, since \begin{align*} \{x\in\RR^n:\lD1(\mu,x)&>(3/2)\sqrt{n}\cdot c\text{ and }J^{*,c}_p(\mu,x)<\infty\} \\
&\subseteq \bigcup_{i=1}^\infty \bigcup_{j=1}^\infty \{x\in\Top(\cT_{x_i}):J^{*,c}_p(\mu,x)\leq j\},\end{align*} it suffices to prove that the measure $\mu\res A_{y,N}$ is 1-rectifiable for all $y\in\RR^n$ such that $\lD1(\mu,y)>(3/2)\sqrt{n}\cdot c$ and for all $N<\infty$, where $A_{y,N}:=\{x\in \Top(\cT_y):J^{*,c}_p(\mu,x)\leq N\}.$

Fix $y\in\RR^n$ such that $\lD1(\mu,y)>(3/2)\sqrt{n}\cdot c$ and fix $N<\infty$. Set $\eta_y:=\mu(\Top(\cT_y))$. Given $0<\varepsilon <\eta_y$, let $\cT_{y,N,\varepsilon}:= \good(\cT_y,N,\varepsilon)\subseteq\cT_y$ denote the tree given by Lemma \ref{l:localize} applied with $\cT=\cT_y$ and $b(Q)\equiv \beta^*_p(\mu,Q)^2\diam Q$ (see Remark \ref{rk:star}). Then $\cT_{y,N,\varepsilon}$ inherits property \eqref{e:cond1} from $\cT_y$ and, by Lemma \ref{l:localize}, $$S_p^{*,c}(\mu,\cT_{y,N,\varepsilon})<\frac{N}{\varepsilon}$$ and \begin{equation*} \mu(A_{y,N}\cap \leaves(\cT_{y,N,\varepsilon}))\geq (1- \varepsilon\eta_y)\mu(A_{y,N}).\end{equation*} Thus, by Lemma \ref{l:draw}, there exists a rectifiable curve $\Gamma_{y,N,\varepsilon}$ in $\RR^n$ such that $\Gamma_{y,N,\varepsilon}\supseteq\leaves(\cT_{y,N,\varepsilon})$. In particular, $\Gamma_{y,N,\varepsilon}$ captures a large portion of the mass of $A_{y,N}$: $$\mu(A_{y,N}\setminus \Gamma_{y,N,\varepsilon})\leq \mu(A_{y,N})-\mu(A_{y,N}\cap \Gamma_{y,N,\varepsilon})\leq \mu(A_{y,N}) - (1-\varepsilon \eta_y)\mu(A_{y,N})= \varepsilon\eta_y\mu(A_{y,n}).$$ (Of course, Lemma \ref{l:draw} also gives a quantitative bound on the length of $\Gamma_{y,N,\varepsilon}$ depending only on $n$, $c$, $N$, $\varepsilon$, and $\diam\Top(\cT_y)$,  but we do not need it here.) To finish, choose $0<\varepsilon_k<\eta_y$ for all $k\geq 1$ so that $\lim_{k\rightarrow\infty} \varepsilon_k=0$. Then $$\mu\left(A_{y,N}\setminus \bigcup_{k=1}^\infty \Gamma_{y,N,\varepsilon_k}\right) \leq \inf_{k\geq 1} \mu(A_{y,N}\setminus \Gamma_{y,N,\varepsilon_k}) \leq \eta_y\mu(A_{y,N})\inf_{k\geq 1} \varepsilon_k = 0.$$ Therefore, $\mu\res A_{y,N}$ is 1-rectifiable. As noted above, this completes the proof.
\end{proof}

\begin{remark}\label{r:sufficient} By substituting Lemma \ref{l:draw} in the proof of Theorem \ref{t:suf} with Lemma \ref{l:draw-big}, one can verify that if $\mu$ is a Radon measure on $\RR^n$ and $1\leq p<\infty$, then the measure $$\mu\res\{x\in\RR^n: J_p^{**}(\mu,x)<\infty\}$$ is 1-rectifiable, where  $$J_p^{**}(\mu,x):=\sum_{Q\in\Delta_1(\RR^n)} \beta_p^{**}(\mu,Q)^2\frac{\diam Q}{\mu(Q)}\chi_Q(x)$$ is the density-normalized Jones function associated with the numbers $\beta_p^{**}(\mu,x)$. However, see Remark \ref{r:not-necessary}.\end{remark}

\section{Proof of Theorems \ref{t:big} and \ref{t:tstm}}\label{s:abcd}

\begin{proof}[Proof of Theorem \ref{t:big}] Let $\mu$ be a Radon measure on $\RR^n$ and let $1\leq p\leq 2$. Partition $\RR^n$ into two sets $$R=\{x\in\RR^n:\lD1(\mu,x)>0 \text{ and }J_p^*(\mu,x)<\infty\}$$ and $$P=\{x\in\RR^n: \lD1(\mu,x)=0\text{ or }J_p^*(\mu,x)=\infty\},$$ which are easily verified to be Borel. Since $\RR^n=R\cup P$ and $R\cap P=\emptyset$, we have $$\mu=(\mu\res R) + (\mu\res P),\quad (\mu\res R)\perp(\mu\res P).$$ By uniqueness of the decomposition $\mu=\mu^1_{\rect}+\mu^1_{\pu}$ in Proposition \ref{p:decomp}, to show that $\mu^1_{\rect}=\mu\res R$ and $\mu^1_{\pu}=\mu\res P$ it suffices to prove that the measure $\mu\res R$ is $1$-rectifiable and the measure $\mu\res P$ is purely 1-unrectifiable.

On one hand, since $J^{*,c}_p(\mu,x) \leq J^*_p(\mu,x)$ for all $x\in\RR^n$ and for all $c>0$, \begin{equation}\begin{split} \label{e:bigcup}
&R=\left\{x\in\RR^n: \lD{1}(\mu,x)>0\text{ and }J^*_p(\mu,x)<\infty\right\}\\
  &\qquad \subseteq \bigcup_{i=1}^\infty \left\{x\in\RR^n:\lD1(\mu,x)> (3/2)\sqrt{n}/i\text{ and } J^{*,1/i}_p(\mu,x)<\infty\right\}.\end{split}\end{equation} Therefore, $\mu\res R$ is 1-rectifiable by \eqref{e:bigcup} and Theorem \ref{t:suf}. On the other hand, because $J^*_p(\mu,x)$ is increasing in $p$ and $p\leq 2$, $$\mu\res P \leq \mu\res \{x\in\RR^n:\lD1(\mu,x)=0\}+\mu\res\{x\in\RR^n: J^*_2(\mu,x)=\infty\}.$$ Therefore, $\mu\res P$ is purely 1-unrectifiable by Lemma \ref{l:spu} and Theorem \ref{t:nec}.
\end{proof}

\begin{proof}[Proof of Corollaries \ref{t:rect} and \ref{t:pu}] A measure $\mu$ is 1-rectifiable if and only if $\mu^1_{\pu}=0$, and a measure $\mu$ is purely 1-unrectifiable if and only if $\mu^1_{\rect}=0$. Therefore, Corollary \ref{t:rect} and Corollary \ref{t:pu} follow immediately from Theorem \ref{t:big}.\end{proof}

\begin{proof}[Proof of Theorem \ref{t:tstm}] Let $n\geq 2$ and let $1\leq p<\infty$. Let $\mu$ be a finite Borel measure with bounded support. To prove the first statement, suppose that there exists a rectifiable curve $\Gamma$ such that $\mu(\RR^n\setminus\Gamma)=0$. Then $\spt\mu\subseteq\Gamma$, since $\Gamma$ is closed. For every $Q\in\Delta(\RR^n)$, let $\ell_{Q}$ be any line such that $$\dist(x,\ell_{Q}) \leq\beta_{\spt\mu}(1600\sqrt{n} Q)\diam 1600\sqrt{n} Q\quad\text{for all }x\in \spt\mu\cap 1600\sqrt{n}Q.$$ Then for every dyadic cube $Q\in\Delta(\RR^n)$ and nearby cube $R\in\Delta^*(Q)$, $$\beta_p(\mu,3R,\ell_Q)^p = \int_{3R}\left(\frac{\dist(x,\ell_Q)}{\diam 3R}\right)^p \frac{d\mu(x)}{\mu(3R)}
\leq \left(\frac{1600\sqrt{n}}{3}\right)^p\beta_{\spt \mu}(1600\sqrt{n} Q)^p.$$ Hence, since $3R\subseteq 1600\sqrt{n} Q$ for all $R\in\Delta^*(Q)$, $$\beta_p^{**}(\mu,Q) \leq \max\{\beta_p(\mu,3R,\ell_Q):R\in\Delta^*(Q)\}\lesssim_n \beta_{\spt\mu}(1600\sqrt{n}Q)\quad\text{for all }Q\in\Delta(\RR^n).$$ Therefore, \begin{equation*}S_p^{**}(\mu)=\sum_{Q\in\Delta(\RR^n)}\beta^{**}_p(\mu,Q)^2\diam Q \lesssim_n\sum_{Q\in\Delta(\RR^n)} \beta_{\spt\mu}(1600\sqrt{n}Q)^2\diam Q \lesssim_{n} \Haus^1(\Gamma)\end{equation*} by Corollary \ref{c:tst}.

The second statement is given by Corollary \ref{c:suf}.
\end{proof}

\section{Variations for pointwise doubling measures}\label{s:doubling}

In the forerunner \cite{BS} to this paper, the authors gave a necessary condition for a Radon measure on $\RR^n$ to be 1-rectifiable using the $L^2$ density-normalized Jones function $\tJ_2(\mu,x)$ (see \eqref{e:tJ}).

\begin{theorem}[{\cite[Theorem A]{BS}}] \label{t:old-nec} If $\mu$ is a 1-rectifiable Radon measure on $\RR^n$, then $\tJ_2(\mu,x)<\infty$ at $\mu$-a.e.~ $x\in\RR^n$. \end{theorem}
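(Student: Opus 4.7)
My plan is to first reduce the problem, via 1-rectifiability, to a single rectifiable curve $\Gamma\subseteq\RR^n$ and then establish a quantitative $L^1$-bound
$$\sum_{\substack{Q\in\Delta(\RR^n)\\ \nu(\Gamma\cap Q)>0}}\beta_2(\nu,3Q)^2\diam Q\lesssim_n \Haus^1(\Gamma)+\nu(\RR^n\setminus\Gamma)\qquad(\dagger)$$
for every finite Borel measure $\nu$ on $\RR^n$. Indeed, by the definition of 1-rectifiability, $\mu$ is supported, up to a null set, on a countable union of rectifiable curves, so it suffices to prove $\tJ_2(\mu,\cdot)<\infty$ at $\mu$-a.e.\ $x\in\Gamma$ for a single rectifiable curve $\Gamma$. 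Unwinding the sum defining $\tJ_2$ and using $\int_\Gamma\chi_Q\,d\mu\leq\mu(Q)$ gives
$$\int_\Gamma \tJ_2(\mu,x)\,d\mu(x)\leq\sum_{\substack{Q\in\Delta_1(\RR^n)\\ \mu(\Gamma\cap Q)>0}}\beta_2(\mu,3Q)^2\diam Q,$$
so applying $(\dagger)$ with $\nu$ the restriction of $\mu$ to a bounded neighborhood of $\Gamma$ large enough to contain every $3Q$ in the sum forces $\tJ_2(\mu,\cdot)\in L^1(\mu\res\Gamma)$, and hence $\tJ_2(\mu,\cdot)<\infty$ at $\mu$-a.e.\ $x\in\Gamma$.

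To prove $(\dagger)$, I would fix $a>3$ and a small $\varepsilon>0$, both depending only on $n$, and split the cubes $Q$ with $\nu(\Gamma\cap Q)>0$ according to whether $\beta_\Gamma(aQ)\geq\varepsilon\beta_2(\nu,3Q)$ or $\beta_\Gamma(aQ)<\varepsilon\beta_2(\nu,3Q)$. On the first class, termwise domination and the ``necessary'' half of the Analyst's traveling salesman theorem (Corollary~\ref{c:tst}) bound the partial sum by $\varepsilon^{-2}\sum_Q\beta_\Gamma(aQ)^2\diam Q\lesssim_n \Haus^1(\Gamma)$. On the second class, any near-optimal line $\ell$ for $\beta_\Gamma(aQ)$ is also near-optimal for $\beta_2(\nu,3Q)$, so $\Gamma\cap aQ$ lies in a thin tube around $\ell$. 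I would split $3Q$ into a ``near'' region, whose contribution is an $O(\varepsilon^2)$ multiple of $\beta_2(\nu,3Q)^2\diam Q$ and can be absorbed into the left-hand side, and a ``far'' region, on which the triangle inequality yields $\dist(x,\ell)\lesssim\dist(x,\Gamma)$, reducing the cube's contribution to an integral of $(\dist(x,\Gamma)/\diam 3Q)^2$ against $\nu$ restricted to $\RR^n\setminus\Gamma$.

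The summation over the second class would then be handled by a Whitney decomposition $\{T\}$ of $\RR^n\setminus\Gamma$ with $\diam T\sim\dist(T,\Gamma)$: swap the order of summation over $Q$ and over Whitney cubes $T\subseteq aQ$, use the constraint $\dist(T,\Gamma)\lesssim\diam aQ$ to restrict the admissible scales $\diam Q$ to those above a threshold determined by $T$, and sum the resulting geometric series to obtain a bound by $\nu(T)$. Adding over $T$ produces $\nu(\RR^n\setminus\Gamma)$. The main obstacle I foresee is precisely this Whitney reorganization: one needs to choose $a$ large enough (depending on $n$) so that $\dist(x,\Gamma\cap aQ)=\dist(x,\Gamma)$ on the far region---which requires $aQ$ to contain a neighborhood of $3Q$ of size comparable to the cube itself---and one must verify the combinatorial fact that each Whitney cube $T$ is associated with only $O_n(1)$ dyadic cubes $Q$ at each given scale, so that the rearranged double sum collapses cleanly to $\nu(\RR^n\setminus\Gamma)$ with a dimensional constant.
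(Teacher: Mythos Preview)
Your outline follows the right architecture---split cubes by comparing $\beta_\Gamma(aQ)$ with $\varepsilon\beta_2(\nu,3Q)$, handle the first family by the traveling salesman theorem, and attack the second via a Whitney decomposition of $\RR^n\setminus\Gamma$---but there is a genuine gap in the far-region step, and the inequality $(\dagger)$ as you state it is not what is proved in \cite{BS}.

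The issue is the normalization in $\beta_2(\nu,3Q)$. You write that on the far region the cube's contribution ``reduces to an integral of $(\dist(x,\Gamma)/\diam 3Q)^2$ against $\nu$''. But
\[
\beta_2(\nu,3Q,\ell)^2\diam Q=\frac{\diam Q}{\nu(3Q)}\int_{3Q}\left(\frac{\dist(x,\ell)}{\diam 3Q}\right)^2 d\nu(x),
\]
so the far-region piece carries the factor $\diam Q/\nu(3Q)$, and your Whitney reorganization only collapses to $\nu(\RR^n\setminus\Gamma)$ if this factor is uniformly bounded, i.e.\ if $\nu(3Q)\gtrsim\diam Q$ for every cube in the sum. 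Nothing in the hypothesis $\nu(\Gamma\cap Q)>0$ forces that. This is precisely the ``lower Ahlfors regularity condition on $E\subseteq\Gamma$'' that the paper points out was present in \cite[Proposition~3.1]{BS} and was only removed in the present paper by passing to $\beta_2^*$ with its built-in factor $\min(\nu(3R)/\diam 3R,1)$ (see the proof of Proposition~\ref{p:1L}, where $m_R/\nu(3R)\le 1/\diam 3R$ is exactly what replaces your missing bound).

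The fix, and what the proof in \cite{BS} actually does (as summarized just after the statement of Theorem~\ref{t:old-nec}), is to first invoke Lemma~\ref{l:positive2}: since $\mu$ is $1$-rectifiable, $\lD1(\mu,x)>0$ at $\mu$-a.e.\ $x$. One then integrates $\tJ_2(\mu,\cdot)$ not over all of $\Gamma$ but over
\[
E=\{x\in\Gamma:\mu(B(x,r))\ge cr\text{ for all }0<r\le r_0\},
\]
so that every cube $Q$ with $\mu(E\cap Q)>0$ contains a point of $E$ and hence satisfies $\mu(3Q)\ge c'\diam Q$. With this lower regularity in hand, your Whitney argument goes through and yields $\int_E\tJ_2(\mu,\cdot)\,d\mu<\infty$; letting $c\downarrow 0$ and $r_0\downarrow 0$ exhausts $\mu$-a.e.\ point of $\Gamma$. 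Your plan is missing both the appeal to positive lower density and the restriction to $E$, and without them the far-region estimate does not close.
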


Examining the proof of Theorem \ref{t:old-nec} in \cite{BS}, one deduces that $\int_E\tJ_2(\mu,x)\,d\mu(x)<\infty$ for every rectifiable curve $\Gamma\subseteq\RR^n$ and for every Borel set $E\subseteq\RR^n$ of the form $$E=\{x\in\Gamma:\mu(B(x,r))\geq cr\text{ for all }0<r\leq r_0\}\quad\text{for some }c>0\text{ and } r_0>0.$$ Thus, the proof of Theorem \ref{t:old-nec} yields the following stronger formulation of the theorem.

\begin{theorem}\label{t:puds} Let $\mu$ be a Radon measure on $\RR^n$. Then $$\mu\res\{x\in\RR^n: \lD1(\mu,x)>0\text{ and }\tJ_2(\mu,x)=\infty\}$$ is purely 1-unrectifiable.\end{theorem}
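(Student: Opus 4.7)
The plan is to derive Theorem \ref{t:puds} as a direct consequence of the stronger quantitative statement recalled immediately before the theorem: namely, that $\int_E \tJ_2(\mu,x)\,d\mu(x)<\infty$ for every rectifiable curve $\Gamma$ and every Borel set $E\subseteq \Gamma$ on which the measure $\mu$ is uniformly lower Ahlfors regular at a fixed scale, i.e.\ $E=\{x\in\Gamma:\mu(B(x,r))\geq cr\text{ for all }0<r\leq r_0\}$ for some $c,r_0>0$. Set $A:=\{x\in\RR^n:\lD1(\mu,x)>0\text{ and }\tJ_2(\mu,x)=\infty\}$. Noting that $A$ is Borel, the goal is to show $\mu(A\cap f([0,1]))=0$ for every Lipschitz map $f:[0,1]\to\RR^n$.

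The first step is a decomposition of $A$ along lower density scales. Since $\lD1(\mu,x)>0$ means $\liminf_{r\downarrow 0}\mu(B(x,r))/(2r)>0$, for each $x\in A$ there exist positive integers $j,k$ (depending on $x$) such that $\mu(B(x,r))\geq r/k$ for every $0<r\leq 1/j$. Hence
\begin{equation*}
A\subseteq \bigcup_{j,k\geq 1} F_{j,k},\qquad F_{j,k}:=\left\{x\in\RR^n:\mu(B(x,r))\geq r/k\text{ for all }0<r\leq 1/j\right\}.
\end{equation*}
The sets $F_{j,k}$ are Borel (lower semicontinuity of $r\mapsto\mu(B(x,r))$), so this is a countable Borel partition of $A$.

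Now fix any rectifiable curve $\Gamma\subseteq\RR^n$; by Lemma \ref{l:fund} it suffices to prove $\mu(A\cap\Gamma)=0$. Intersecting with $\Gamma$, the sets $E_{j,k}:=\Gamma\cap F_{j,k}$ fit exactly the hypothesis of the strengthened form of Theorem \ref{t:old-nec} recalled above (with $c=1/k$ and $r_0=1/j$), so
\begin{equation*}
\int_{E_{j,k}} \tJ_2(\mu,x)\,d\mu(x)<\infty \qquad\text{for every }j,k\geq 1.
\end{equation*}
In particular the finite-valued set $\{\tJ_2(\mu,\cdot)<\infty\}$ has full $\mu$-measure in each $E_{j,k}$, which forces $\mu(A\cap E_{j,k})=0$ since $\tJ_2(\mu,\cdot)\equiv\infty$ on $A$. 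Summing the countable family gives $\mu(A\cap\Gamma)\leq\sum_{j,k}\mu(A\cap E_{j,k})=0$, as desired.

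No step here is substantive beyond the citation to the proof of Theorem \ref{t:old-nec}; the only mildly delicate point is confirming that the proof in \cite{BS} really yields the localized bound $\int_E\tJ_2(\mu,x)\,d\mu(x)<\infty$ on the lower Ahlfors regular subsets $E$ of $\Gamma$, rather than merely the pointwise $\mu$-a.e.\ finiteness under 1-rectifiability. That book-keeping is precisely what is being invoked in the sentence immediately preceding the theorem, so it suffices to point to it; the deduction above then assembles the pieces. The rest is the routine countable-decomposition and contradiction argument.
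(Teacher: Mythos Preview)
Your proposal is correct and follows essentially the same route as the paper, which does not give a separate proof of Theorem \ref{t:puds} at all but simply points to the integral bound $\int_E\tJ_2(\mu,x)\,d\mu(x)<\infty$ on lower Ahlfors regular subsets $E\subseteq\Gamma$ extracted from the proof of Theorem \ref{t:old-nec} in \cite{BS}. Your argument fills in the natural countable decomposition over parameters $(j,k)$ that the paper leaves implicit; the only small quibble is that the parenthetical justification ``lower semicontinuity of $r\mapsto\mu(B(x,r))$'' is phrased a bit loosely (one uses left-continuity in $r$ to reduce to rational radii, together with lower semicontinuity of $x\mapsto\mu(B(x,r))$), but the Borel measurability of the $F_{j,k}$ is standard.
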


We now give a second application of Proposition \ref{p:curve} and the tools of \S\ref{s:suf}, which in combination with Lemma \ref{l:spu} and Theorem \ref{t:puds}, provides characterizations in terms of $\tJ_p(\mu,x)$ of 1-rectifiable and purely 1-unrectifiable pointwise doubling measures.

Let  $Q^\uparrow\in\Delta(\RR^n)$ denote the \emph{parent} of $Q\in\Delta(\RR^n)$. That is, let $Q^\uparrow$ denote the unique dyadic cube such that $Q^\uparrow\supseteq Q$ and $\side Q^\uparrow = 2\side Q$.

\begin{lemma}[drawing rectifiable curves through the leaves of uniformly doubling trees] \label{l:draw2} Let $n\geq 2$, let $1\leq p<\infty$, and let $0<D<\infty$. Let $\mu$ be a Radon measure on $\RR^n$. If $\cT$ is a tree of dyadic cubes such that \begin{equation}\label{e:dc1}\mu(3Q^\uparrow)\leq 2^D\mu(3Q)\quad\text{for all }Q\in\cT\setminus\Top(\cT), \text{ and}\end{equation} \begin{equation}\label{e:Sp} S_p(\mu,\cT):=\sum_{Q\in\cT} \beta_p(\mu,3Q)^2\diam Q <\infty,\end{equation} then there exists a rectifiable curve $\Gamma$ in $\RR^n$ such that $\Gamma\supseteq\leaves(\cT)$ and \begin{equation} \Haus^1(\Gamma)\lesssim_{n} \diam \Top(\cT)+(3200\sqrt{n})^{2D/p} S_p(\mu,\mathcal{T}).\end{equation}\end{lemma}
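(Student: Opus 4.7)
The plan is to reuse the scaffolding of the proof of Lemma \ref{l:draw}: normalize so $\Top(\cT) = [0,1)^n$, set $C^\star = 4$ and $r_0 = 3\sqrt{n}$, let $Z_k := \{z_{3Q} : Q \in \cT,\ \side Q = 2^{-k}\}$ denote the $\mu$-centers of mass at scale $2^{-k}$, and let $V_k$ be a maximal $2^{-k}r_0$-separated subset of $Z_k$. After first trimming $\cT$ so every cube lies on an infinite branch, the verifications of $(V_I)$, $(V_\two)$, and $(V_\three)$ of Proposition \ref{p:curve} transfer verbatim from Lemma \ref{l:draw}, since those arguments only use the center-of-mass construction and the dyadic tree structure.

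The new ingredient is that $\beta_p(\mu, 3Q)$ alone does not witness a single line that is uniformly good on every $3R$ with $R \in \Delta^*(Q)$, so the direct selection used in Lemma \ref{l:draw} is unavailable. I would fix this by enlarging the anchor cube. Choose an integer $L = \lceil \log_2(3200\sqrt{n})\rceil$, write $v = z_{3Q_{k,v}}$ for each $v \in V_k$, and let $\widehat{Q}_{k,v}$ denote the $L$-th dyadic ancestor of $Q_{k,v}$, truncated to $\Top(\cT)$ if it would otherwise exceed the top. A direct geometric check using $3R \subseteq 1600\sqrt{n}\, Q_{k,v}$ for $R \in \Delta^*(Q_{k,v})$ shows that $3\widehat{Q}_{k,v}$ engulfs every such $3R$. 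Pick $\ell_{k,v}$ to attain the near-infimum $\beta_p(\mu, 3\widehat{Q}_{k,v}, \ell_{k,v}) \leq 2\beta_p(\mu, 3\widehat{Q}_{k,v})$. The finitely many ``shallow'' pairs $(k,v)$ for which $Q_{k,v}$ lies within $L$ generations of $\Top(\cT)$ number $\lesssim_n 1$; setting $\alpha_{k,v}$ to be a constant for them lets their total contribution absorb into the $\diam \Top(\cT)$ allowance via the $r_0$ term of Proposition \ref{p:curve}.

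For the generic case, take $x = z_{3Q_{j,x}} \in V_j \cap B(v, 65 C^\star 2^{-k}r_0)$ with $j \in \{k-1, k\}$. A size/location computation forces $Q_{j,x} \in \Delta^*(Q_{k,v})$, so $3Q_{j,x} \subseteq 3\widehat{Q}_{k,v}$, and $Q_{j,x}$ is a dyadic descendant of $\widehat{Q}_{k,v}$ at most $L+1$ generations below with every cube on the chain in $\cT$. Iterating \eqref{e:dc1} yields $\mu(3\widehat{Q}_{k,v}) \leq 2^{(L+1)D}\mu(3Q_{j,x})$, and the integral definition of $\beta_p$ then produces
\begin{equation*}
\beta_p(\mu, 3Q_{j,x}, \ell_{k,v}) \leq \left(\frac{\mu(3\widehat{Q}_{k,v})}{\mu(3Q_{j,x})}\right)^{1/p}\frac{\diam 3\widehat{Q}_{k,v}}{\diam 3Q_{j,x}}\,\beta_p(\mu, 3\widehat{Q}_{k,v}, \ell_{k,v}) \lesssim_n 2^{L(1 + D/p)}\beta_p(\mu, 3\widehat{Q}_{k,v}).
\end{equation*}
Combining with Lemma \ref{l:lerman} gives $\dist(x, \ell_{k,v}) \leq \alpha_{k,v}\, 2^{-k}r_0$ with $\alpha_{k,v} \lesssim_n 2^{L(1+D/p)}\beta_p(\mu, 3\widehat{Q}_{k,v})$, which is condition \eqref{e:alpha}.

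Since each $Q \in \cT$ arises as $\widehat{Q}_{k,v}$ for at most $\lesssim_n 1$ pairs $(k,v)$, and $2^{-k}r_0 \sim_n 2^{-L}\diam Q$,
\begin{equation*}
\sum_{k=1}^\infty \sum_{v \in V_k} \alpha_{k,v}^2\, 2^{-k}r_0 \lesssim_n 2^{2L(1+D/p)} \cdot 2^{-L} \sum_{Q \in \cT} \beta_p(\mu, 3Q)^2 \diam Q \lesssim_n (3200\sqrt{n})^{2D/p}\, S_p(\mu, \cT),
\end{equation*}
verifying \eqref{e:asum}. Proposition \ref{p:curve} then produces a compact, connected set $\Gamma$ with the claimed $\Haus^1$-bound, and the triangle-inequality argument at the end of the proof of Lemma \ref{l:draw} shows $\Gamma \supseteq \leaves(\cT)$. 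The main obstacle is the doubling transfer: one cannot anchor $\ell_{k,v}$ at $Q_{k,v}$ itself, and the blow-up to $\widehat{Q}_{k,v}$ together with the iterated doubling is exactly what produces the $2^{LD/p}$ amplification responsible for the $(3200\sqrt{n})^{2D/p}$ constant in the conclusion.
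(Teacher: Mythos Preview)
Your approach is the same as the paper's: both anchor the line $\ell_{k,v}$ at an enlarged ancestor $\widehat{Q}_{k,v}$ of $Q_{k,v}$, choose $\ell_{k,v}$ to nearly realize $\beta_p(\mu,3\widehat{Q}_{k,v})$, and use the doubling hypothesis \eqref{e:dc1} to transfer this control down to each $3Q_{j,x}$. The paper takes $\widehat{Q}_{k,v}$ to be the \emph{minimal} ancestor of $Q_{k,v}$ in $\cT$ with $3\widehat{Q}_{k,v}\supseteq 3Q_{j,v'}$ for every relevant $(j,v')$, whereas you fix it as the $L$-th ancestor; this difference is cosmetic.

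There is, however, a gap in your doubling step. You claim that ``$Q_{j,x}$ is a dyadic descendant of $\widehat{Q}_{k,v}$ \dots\ with every cube on the chain in $\cT$,'' and then iterate \eqref{e:dc1} along that chain. But $\widehat{Q}_{k,v}$ is an ancestor of $Q_{k,v}$, not of $Q_{j,x}$, and the two need not be nested: if $Q_{k,v}$ sits at a face of $\widehat{Q}_{k,v}$, a nearby $Q_{j,x}\in\Delta^*(Q_{k,v})$ can lie just outside $\widehat{Q}_{k,v}$ even though $3Q_{j,x}\subseteq 1600\sqrt{n}\,Q_{k,v}\subseteq 3\widehat{Q}_{k,v}$. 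So the chain you want to iterate along does not exist. The fix is to iterate \eqref{e:dc1} along the ancestors of $Q_{j,x}$ instead (which \emph{are} all in $\cT$ by the tree property): if $P$ is the ancestor of $Q_{j,x}$ two levels above the level of $\widehat{Q}_{k,v}$, then an elementary coordinate check shows $3P\supseteq 3\widehat{Q}_{k,v}$, whence $\mu(3\widehat{Q}_{k,v})\leq\mu(3P)\leq 2^{(L+O(1))D}\mu(3Q_{j,x})$. After enlarging your ``shallow'' regime by two generations, the rest of your argument goes through unchanged with only a harmless adjustment to the constant.
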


\begin{proof} As in the proof of Lemma \ref{l:draw}, we may assume without loss of generality that $\Top(\cT)=[0,1)^n$ and every cube $Q\in\cT$ belongs to an infinite branch of $\cT$. Set parameters $C^\star=4$ and $r_0=3\sqrt{n}$. For each $Q\in\cT$, let $z_{3Q}=\mu(3Q)^{-1}\int_{3Q} z\,d\mu(z)$ denote the center of mass of $3Q$. For each $k\geq 0$, let $Z_k= \{z_{3Q}:Q\in\cT\text{ and }\side Q= 2^{-k}\}$ and choose $V_k$ to be any maximal $2^{-k}r_0$-separated subset of $Z_k$. Pick any $x_0\in 3Q_0$. Then \begin{equation}\label{e:containers} V_k\subseteq Z_k\subseteq 3\Top(\cT)\subseteq B(x_0,r_0)\subseteq B(x_0,C^\star r_0).\end{equation} The set $V_k$ satisfies condition $(V_I)$ of Proposition \ref{p:curve} by definition. The set $V_k$ also satisfies conditions $(V_\two)$ and $(V_\three)$ of Proposition \ref{p:curve} (see the proof of Lemma \ref{l:draw}).

For each $k\geq 0$ and $v\in V_k$, choose a dyadic cube $Q_{k,v}\in\cT$ such that $v=z_{3Q_{k,v}}$. Then, for each $k\geq 1$ and $v\in V_k$, choose a minimal dyadic cube  $\widehat{Q}_{k,v}\in\cT$ such that $\widehat{Q}_{k,v}\supseteq Q_{k,v}$ and such that $3\widehat{Q}_{k,v}$ contains $3Q_{j,v'}$ for every $j\in \{k-1,k\}$ and $v'\in V_j\cap B(v,65 C^\star 2^{-k}r_0)$. Since $65 C^\star 2^{-k}r_0 = 780\sqrt{n} 2^{-k}$, we obtain (the overestimate) \begin{equation}\label{e:hat-scale}\frac{\side \widehat{Q}_{k,v}}{\side Q_{j,v'}}=\frac{\diam 3\widehat{Q}_{k,v}}{\diam 3Q_{j,v'}} \leq 2^{\lceil \log_2 1600\sqrt{n}\rceil}<3200\sqrt{n}\end{equation} for all $j\in\{k-1,k\}$ and $v'\in V_j\cap B(v,65C^\star 2^{-k}r_0)$. Thus, by the doubling condition, \begin{equation}\label{e:mu-scale} \frac{\mu(\widehat{Q}_{k,v})}{\mu(Q_{j,v'})} \leq 2^{D\lceil \log_2 1600\sqrt{n}\rceil} < (3200\sqrt{n})^D\end{equation} for all $j\in\{k-1,k\}$ and $v'\in V_j\cap B(v,65C^\star 2^{-k}r_0)$.

We are ready to pick lines $\ell_{k,v}$ and numbers $\alpha_{k,v}\geq 0$ to use in Proposition \ref{p:curve}. Let $k\geq 1$ and let $v\in V_k$. Choose $\ell_{k,v}$ to be any straight line in $\RR^n$ such that $$\beta_p(\mu,3\widehat{Q}_{k,v},\ell_{k,v}) \leq 2 \beta_p(\mu,3\widehat{Q}_{k,v}).$$ Then, by \eqref{e:hat-scale} and \eqref{e:mu-scale},
\begin{equation}\begin{split} \label{e:alpha-hat}\beta_p(\mu,3&Q_{j,v'},\ell_{k,v})\diam 3Q_{j,v'} \\ &\leq 2\,\frac{\diam 3\widehat{Q}_{k,v}}{\diam 3Q_{j,v'}}
\left(\frac{\mu(\widehat{Q}_{k,v})}{\mu(Q_{j,v'})}\right)^{1/p}\beta_p(\mu,3\widehat{Q}_{k,v},\ell_{k,v})\diam 3Q_{k,v}\\
&\leq 6400\sqrt{n}\left(3200\sqrt{n}\right)^{D/p}\beta_p(\mu,3\widehat{Q}_{k,v})\diam 3Q_{k,v}=:\alpha_{k,v} 2^{-k}r_0\end{split}\end{equation} for all $j\in\{k-1,k\}$ and $v'\in V_j\cap B(v,65C^\star 2^{-k}r_0)$. Hence condition \eqref{e:alpha} of Proposition \ref{p:curve} holds by Lemma \ref{l:lerman} and \eqref{e:alpha-hat}. Next, note that each cube $Q\in\cT$ appears as $\widehat{Q}_{k,v}$ for at most $C(n)$ pairs $(k,v)$ by \eqref{e:hat-scale}. Thus, condition \eqref{e:asum} of Proposition \ref{p:curve} holds by \eqref{e:Sp}.

By Proposition \ref{p:curve}, there exists a connected, compact set $\Gamma\subseteq \RR^n$ such that $$
 \Haus^1(\Gamma) \lesssim_{n,C^\star} r_0+ \sum_{k=1}^\infty\sum_{v\in V_k} \alpha_{k,v}^2 2^{-k}r_0 \lesssim_n \diam \Top(\cT) + (3200\sqrt{n})^{2D/p} S_p(\mu,\cT)$$ and $\Gamma \supseteq V=\lim_{k\rightarrow\infty} V_k$. By Lemma \ref{l:fund}, $\Gamma$ is a rectifiable curve. Finally, as in the proof of Lemma \ref{l:draw}, $\Gamma\supseteq\leaves(\cT)$.
\end{proof}

\begin{theorem} \label{t:suf2} Let $\mu$ be a Radon measure on $\RR^n$ and let $1\leq p<\infty$. Then the measure $$\mu\res\left\{x\in\spt\mu: \limsup_{r\downarrow 0} \frac{\mu(B(x,2r))}{\mu(B(x,r))}<\infty\text{ and } \tJ_p(\mu,x)<\infty\right\}$$ is 1-rectifiable. \end{theorem}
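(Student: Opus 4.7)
The plan is to mirror the proof of Theorem \ref{t:suf}, using Lemma \ref{l:draw2} in place of Lemma \ref{l:draw} and the uniform doubling condition \eqref{e:dc1} in place of the lower regularity condition \eqref{e:cond1}.

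First, I would localize and quantify the pointwise doubling hypothesis. If $\limsup_{r\downarrow 0}\mu(B(x,2r))/\mu(B(x,r))<\infty$, then there exist integers $D_x$ and $k_x$ such that $\mu(B(x,2r))\leq 2^{D_x}\mu(B(x,r))$ for all $0<r\leq 2^{-k_x}$. Combined with the elementary inclusions $B(x,\side Q)\subseteq 3Q$ and $3Q^\uparrow\subseteq B(x,4\sqrt{n}\,\side Q)$ valid for every dyadic cube $Q\ni x$, iterating the ball-doubling estimate yields a dimensional constant $m_n\in\NN$ and an integer $D'_x\leq m_n D_x$ with
\begin{equation*}
\mu(3Q^\uparrow)\leq 2^{D'_x}\mu(3Q)\quad\text{for every dyadic cube }Q\ni x\text{ with }\side Q\leq 2^{-k_x-m_n}.
\end{equation*}
Since $(D_x,k_x)$ ranges over the countable set $\NN\times\NN$, it suffices to prove rectifiability of $\mu\res E(D,k)$ for each fixed pair $(D,k)$, where $E(D,k)$ denotes the set of $x\in\spt\mu$ at which the displayed cube-doubling bound holds with parameters $D$ and $k$ and for which $\tJ_p(\mu,x)<\infty$.

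Second, for each $x\in E(D,k)$ let $Q_x$ denote the maximal dyadic cube containing $x$ of side length at most $\min\{2^{-k-m_n},1\}$, and define
\begin{equation*}
\cT_x:=\{Q\in\Delta_1(\RR^n):Q\subseteq Q_x\text{ and }\mu(3R^\uparrow)\leq 2^{D}\mu(3R)\text{ for every dyadic }R\text{ with }Q\subseteq R\subseteq Q_x\}.
\end{equation*}
Then $\cT_x$ is a tree that satisfies \eqref{e:dc1} with exponent $D$, and $x\in\Top(\cT_x)\cap\leaves(\cT_x)$. Because $Q_x$ lies in the countable collection $\Delta_1(\RR^n)$, the family $\{\cT_x:x\in E(D,k)\}$ is itself countable, so we may enumerate it as $\{\cT_j\}_{j=1}^\infty$. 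Following the scheme of Theorem \ref{t:suf}, it suffices to prove that $\mu\res A_{j,N}$ is $1$-rectifiable for every $j\geq 1$ and every $N<\infty$, where $A_{j,N}:=\{x\in\Top(\cT_j):\tJ_p(\mu,x)\leq N\}$.

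Fix such $j$ and $N$, set $\eta_j:=\mu(\Top(\cT_j))$, and for each $0<\varepsilon<\eta_j$ apply Lemma \ref{l:localize} with $b(Q):=\beta_p(\mu,3Q)^2\diam Q$ to extract a good subtree $\cT_{j,N,\varepsilon}\subseteq\cT_j$. Since $S_{\cT_j,b}(\mu,x)\leq \tJ_p(\mu,x)$ on $\Top(\cT_j)$, parts (3) and (4) of Lemma \ref{l:localize} give both $\mu(A_{j,N}\cap\leaves(\cT_{j,N,\varepsilon}))\geq(1-\varepsilon\eta_j)\mu(A_{j,N})$ and $S_p(\mu,\cT_{j,N,\varepsilon})<N/\varepsilon$. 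Crucially, $\cT_{j,N,\varepsilon}$ inherits the uniform doubling property \eqref{e:dc1} from $\cT_j$, so Lemma \ref{l:draw2} produces a rectifiable curve $\Gamma_{j,N,\varepsilon}\supseteq\leaves(\cT_{j,N,\varepsilon})$. Taking $\varepsilon=\varepsilon_\ell\downarrow 0$ and forming the countable union $\bigcup_\ell\Gamma_{j,N,\varepsilon_\ell}$ covers $\mu$-almost all of $A_{j,N}$, proving $1$-rectifiability of $\mu\res A_{j,N}$, and then countable unions over $j$, $N$, $D$, $k$ finish the proof.

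The main technical step is the translation from pointwise ball-doubling at $x$ into the uniform cube-doubling condition \eqref{e:dc1} for nearby triples $3Q$ and $3Q^\uparrow$; this is a routine geometric exercise relying on the inclusions above and absorbing dimensional constants into the exponent $D'_x$. Everything else is a careful adaptation of the proof of Theorem \ref{t:suf}, with the sole essential change being the replacement of the lower-regularity trees $\cT_x$ there by the doubling trees defined here.
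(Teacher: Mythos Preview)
Your proposal is correct and follows essentially the same approach as the paper's proof: quantify the pointwise doubling to obtain a cube-doubling bound $\mu(3Q^\uparrow)\lesssim\mu(3Q)$ along dyadic towers through $x$, reduce to countably many uniformly doubling trees $\cT_x$, and then run the localization Lemma~\ref{l:localize} together with Lemma~\ref{l:draw2} exactly as in the proof of Theorem~\ref{t:suf}. The only cosmetic differences are bookkeeping (you index by pairs $(D,k)\in\NN\times\NN$ whereas the paper indexes by $(Q_x,D_x)$) and the specific dimensional constants in the ball-to-cube translation.
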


\begin{proof}
Let $\mu$ be a Radon measure on $\RR^n$ and let $1\leq p<\infty$. Write $$\double(\mu,x):=\limsup_{r\downarrow 0}\frac{\mu(B(x,2r))}{\mu(B(x,r))}\in[0,\infty]\quad\text{for all }x\in\spt\mu.$$ For every $x\in\spt\mu$ such that $\double(\mu,x)<\infty$, there exists an integer $1\leq D_x<\infty$ and $r_x>0$ such that $\mu(B(x,2r))\leq 2^{D_x}\mu(B(x,r))$ for all $0<r\leq r_x$. Hence $$\mu(3Q^\uparrow) \leq \mu(B(x,\diam 3Q^\uparrow)) \leq 2^{D_x\lceil \log_2 6\sqrt{n}\rceil} \mu(B(x,\side Q)) \leq 2^{D_x\lceil\log_2 6\sqrt{n}\rceil} \mu(3Q)$$ for every dyadic cube $Q\in\Delta(\RR^n)$ containing $x$ such that $2^{\lceil\log_2 6\sqrt{n}\rceil-1}\side Q \leq r_x$. Thus, if $\double(\mu,x)<\infty$, then $x$ belongs to the leaves of the tree \begin{equation*}\begin{split}
\cT_x:=\Big\{&Q\in\Delta_1(\RR^n):Q\subseteq Q_x\text{ and }\\
&\mu(3R^\uparrow) \leq (12\sqrt{n})^{D_x} \mu(3R) \text{ for all $R\in\Delta(\RR^n)$ such that }Q\subseteq R\subseteq Q_x \Big\},\end{split}\end{equation*} where $Q_x$ is defined to be the maximal dyadic cube containing $x$ with $$\side Q_x\leq\min\{r_x/2^{\lceil \log_2 6\sqrt{n}\rceil-1},1\}.$$ In fact, note that $x\in\Top(\cT_x)\cap \leaves(\cT_x)$ and the tree $\cT_x$ satisfies condition \eqref{e:dc1} of Lemma \ref{l:draw2}. Because each tree  $\cT_x$ is determined by $Q_x$ and $D_x$, and $\Delta_1(\RR^n)$ and $\NN$ are countable, the collection $\{\cT_x:\double(\mu,x)<\infty\}$ of trees is enumerable, say $$\{\cT_x:\double(\mu,x)<\infty\}=\{\cT_{x_i}:i=1,2,\dots\}$$ for some sequence of points such that $\double(\mu,x_i)<\infty$ for all $i\geq 1$.
Therefore, since \begin{align*} \{x\in\RR^n:\double(\mu,x)&<\infty\text{ and }\tJ_p(\mu,x)<\infty\} \\
&\subseteq \bigcup_{i=1}^\infty \bigcup_{j=1}^\infty \{x\in\Top(\cT_{x_i}):\tJ_p(\mu,x)\leq j\},\end{align*} it suffices to prove that the measure $\mu\res A_{y,N}$ is 1-rectifiable for all $y\in\RR^n$ such that $\double(\mu,y)<\infty$ and for all $N<\infty$, where $A_{y,N}:=\{x\in \Top(\cT_y):\tJ_p(\mu,x)\leq N\}.$

Fix $y\in\RR^n$ such that $\double(\mu,y)<\infty$ and fix $N<\infty$. Set $\eta_y:=\mu(\Top(\cT_y))$. Given $0<\varepsilon <\eta_y$, let $\cT_{y,N,\varepsilon}:= \good(\cT_y,N,\varepsilon)\subseteq\cT_y$ denote the tree given by Lemma \ref{l:localize} applied with $\cT=\cT_y$ and $b(Q)\equiv \beta_p(\mu,3Q)^2\diam Q$. Then $\cT_{y,N,\varepsilon}$ inherits property \eqref{e:dc1} from $\cT_y$ and, by Lemma \ref{l:localize}, $$S_p(\mu,\cT_{y,N,\varepsilon})<\frac{N}{\varepsilon}$$ and \begin{equation*} \mu(A_{y,N}\cap \leaves(\cT_{y,N,\varepsilon}))\geq (1- \varepsilon\eta_y)\mu(A_{y,N}).\end{equation*} Thus, by Lemma \ref{l:draw2}, there exists a rectifiable curve $\Gamma_{y,N,\varepsilon}$ in $\RR^n$ such that $\Gamma_{y,N,\varepsilon}\supseteq\leaves(\cT_{y,N,\varepsilon})$. In particular, $\Gamma_{y,N,\varepsilon}$ captures a large portion of the mass of $A_{y,N}$: $$\mu(A_{y,N}\setminus \Gamma_{y,N,\varepsilon})\leq \mu(A_{y,N})-\mu(A_{y,N}\cap \Gamma_{y,N,\varepsilon})\leq \mu(A_{y,N}) - (1-\varepsilon \eta_y)\mu(A_{y,N})= \varepsilon\eta_y\mu(A_{y,n}).$$ To finish, choose $0<\varepsilon_k<\eta_y$ for all $k\geq 1$ so that $\lim_{k\rightarrow\infty} \varepsilon_k=0$. Then $$\mu\left(A_{y,N}\setminus \bigcup_{k=1}^\infty \Gamma_{y,N,\varepsilon_k}\right) \leq \inf_{k\geq 1} \mu(A_{y,N}\setminus \Gamma_{y,N,\varepsilon_k}) \leq \eta_y\mu(A_{y,N})\inf_{k\geq 1} \varepsilon_k = 0.$$ Therefore, $\mu\res A_{y,N}$ is 1-rectifiable. As noted above, this completes the proof.
\end{proof}

We now have all the necessary components to prove Theorem \ref{t:pd-big}.

\begin{proof}[Proof of Theorem \ref{t:pd-big}] Let $\mu$ is a pointwise doubling measure on $\RR^n$ and let $1\leq p\leq 2$. Partition $\RR^n$ into two sets, $R=\{x\in\RR^n:\tJ_p(\mu,x)<\infty\}$ and $P=\{x\in\RR^n: \tJ_p(\mu,x)=\infty\}$. It is a standard exercise to show that $R$ and $P$ are Borel sets. Since $\RR^n=R\cup P$ and $R\cap P=\emptyset$, we have $\mu=(\mu\res R) + (\mu\res P)$, $(\mu\res R)\perp(\mu\res P).$ By uniqueness of the decomposition $\mu=\mu^1_{\rect}+\mu^1_{\pu}$ in Proposition \ref{p:decomp}, we can prove $\mu^1_{\rect}=\mu\res R$ and $\mu^1_{\pu}=\mu\res P$ by demonstrating that $\mu\res R$ is $1$-rectifiable and $\mu\res P$ is purely 1-unrectifiable. On one hand, since $\mu$ is pointwise doubling, $$\mu\res R=\mu\res \left\{x\in\spt\mu: \limsup_{r\downarrow 0} \frac{\mu(B(x,2r))}{\mu(B(x,r))}<\infty\text{ and } \tJ_p(\mu,x)<\infty\right\}.$$ Thus, $\mu\res R$ is 1-rectifiable by Theorem \ref{t:suf2}. On the other hand, because $\tJ_p(\mu,x)$ is increasing the exponent $p$, $1\leq p\leq 2$, and $$\mu\res P\leq \mu\res\{x\in\RR^n:\lD1(\mu,x)=0\}+\mu\res\{x\in\RR^n:\lD1(\mu,x)>0\text{ and }\tJ_2(\mu,x)=\infty\},$$ the measure $\mu\res P$ is purely 1-unrectifiable by Lemma \ref{l:spu} and Theorem \ref{t:puds}. Therefore, $\mu^1_{\rect}=\mu\res R$ and $\mu^1_{\pu}=\mu\res P$.
\end{proof}

\section{Drawing rectifiable curves I: description of the curves and connectedness}\label{s:proof26}

The goal of this and the next section is to prove Proposition \ref{p:curve}, which for the reader's convenience we now restate.

\begin{proposition} \label{p:curve2} Let $n\geq 2$, let $C^\star>1$, let $x_0\in\RR^n$, and let $r_0>0$. Let $(V_k)_{k=0}^\infty$ be a sequence of nonempty finite subsets of $B(x_0,C^\star r_0)$ such that
\begin{enumerate}
\item[($V_I$)] distinct points $v,v'\in V_k$ are uniformly separated: $|v-v'|\geq 2^{-k}r_0$;
\item[($V_\two$)] for all $v_k\in V_k$, there exists $v_{k+1}\in V_{k+1}$ such that $|v_{k+1}-v_k|< C^\star 2^{-k}r_0$; and,
\item[($V_\three$)] for all $v_{k}\in V_{k}$ ($k\geq 1$), there exists $v_{k-1}\in V_{k-1}$ such that $|v_{k-1}-v_k|< C^\star 2^{-k}r_0$.
\end{enumerate} Suppose that for all $k\geq 1$ and for all $v\in V_k$ we are given a straight line $\ell_{k,v}$ in $\RR^n$ and a number $\alpha_{k,v}\geq 0$ such that
 \begin{equation}\label{e:alpha2}\sup_{x\in (V_{k-1}\cup V_k)\cap B(v, 65 C^\star 2^{-k}r_0)} \dist(x,\ell_{k,v}) \leq \alpha_{k,v} 2^{-k}r_0\end{equation} and \begin{equation}\label{e:asum2}\sum_{k=1}^\infty\sum_{v\in V_k} \alpha_{k,v}^2 2^{-k} r_0<\infty.\end{equation}
Then the sets $V_k$ converge in the Hausdorff metric to a compact set $V\subseteq \overline{B(x_0,C^\star r_0)}$ and
there exists a compact, connected set $\Gamma\subseteq\overline{B(x_0,C^\star r_0)}$ such that $\Gamma\supseteq V$ and
\begin{align}\label{e:G-length2}
\Haus^1(\Gamma)\lesssim_{n,C^\star} r_0 + \sum_{k=1}^\infty\sum_{v\in V_k} \alpha_{k,v}^2 2^{-k}r_0.
\end{align}
\end{proposition}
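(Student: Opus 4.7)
The plan is to carry out a Jones-style multiscale construction, adapted to handle the absence of nesting $V_{k+1}\supseteq V_k$. First I would establish Hausdorff convergence of $V_k$: conditions $(V_\two)$ and $(V_\three)$ together give $d_H(V_k,V_{k+1})\leq C^\star 2^{-k}r_0$, so $(V_k)$ is Cauchy in the Hausdorff metric on compact subsets of $\overline{B(x_0,C^\star r_0)}$, and its limit $V$ is compact. Then the core task is to build, by induction on $k$, a sequence of compact connected polygonal networks $\Gamma_k\subseteq \overline{B(x_0,2C^\star r_0)}$ such that $\Gamma_k\supseteq V_k$ and
$$\Haus^1(\Gamma_k)\leq C(n,C^\star)\Bigl(r_0+\sum_{j=1}^{k}\sum_{v\in V_j}\alpha_{j,v}^2\, 2^{-j}r_0\Bigr).$$
For the base case, note that by $(V_I)$ the $r_0$-separated set $V_0\subseteq B(x_0,C^\star r_0)$ has cardinality bounded by a constant depending only on $n$ and $C^\star$, so one can take $\Gamma_0$ to be a minimum spanning tree through $V_0$ of length $\lesssim_{n,C^\star} r_0$.

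The inductive step is where the $\alpha$-square estimate enters. To pass from $\Gamma_k$ to $\Gamma_{k+1}$, I would process each $v\in V_{k+1}$ by a local modification inside $B(v,65 C^\star 2^{-(k+1)}r_0)$. By \eqref{e:alpha2}, every point of $V_{k+1}\cup V_k$ in this ball lies within $\alpha_{k+1,v}\cdot 2^{-(k+1)}r_0$ of the line $\ell_{k+1,v}$; in particular the relevant ``entry and exit'' points of $\Gamma_k$ (the $V_k$-vertices the current curve must be rerouted through near $v$) lie near $\ell_{k+1,v}$. Replacing the portion of $\Gamma_k$ in the ball with a polygonal detour that visits $v$ and reconnects to its $(V_\two)$-inherited neighbors uses the elementary ``near-collinear triangle'' estimate: three points within a $\delta$-tube around a line of length $\ell$ with $\delta/\ell\leq\alpha$ admit a polygonal path of length at most $\ell+O(\alpha^2 \ell)$. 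This contributes $O(\alpha_{k+1,v}^2\, 2^{-(k+1)} r_0)$ per point $v$, and by $(V_I)$ at scale $k+1$ the balls $B(v,65 C^\star 2^{-(k+1)}r_0)$ have bounded overlap (depending only on $n$ and $C^\star$), so summing gives the inductive length bound.

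The main obstacle is the bookkeeping forced by the non-nested regime: each $v_k\in V_k$ must be ``replaced'' at scale $k+1$ by a $(V_\three)$-partner $v_{k+1}\in V_{k+1}$ within $C^\star 2^{-k}r_0$, and during the modification we delete $v_k$ from $\Gamma_{k+1}$. One must verify that the detours performed near different $v\in V_{k+1}$ are compatible (they do not disconnect the network and their combined effect still captures every point of $V_{k+1}$), and that the ``short move'' from $v_k$ to $v_{k+1}$ itself can be absorbed either into an $\alpha$-controlled detour or into a spanning cost bounded by a single term of size $\lesssim 2^{-k}r_0$ that later telescopes. The line $\ell_{k+1,v}$ supplies the correct reference direction for making these moves consistently, and the enlarged radius $65 C^\star 2^{-k}r_0$ in \eqref{e:alpha2} is precisely what guarantees the involved $V_k$-points and their replacements all lie in the controlled neighborhood of $\ell_{k+1,v}$.

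Once the $\Gamma_k$ are built, the hypothesis \eqref{e:asum2} yields $\sup_k \Haus^1(\Gamma_k)<\infty$. Since the $\Gamma_k$ are uniformly bounded compact connected sets, Corollary~\ref{c:fund} produces a subsequence $\Gamma_{k_j}$ converging in the Hausdorff metric to a compact connected set $\Gamma$ with $\Haus^1(\Gamma)\leq 32\liminf_k \Haus^1(\Gamma_k)$, giving \eqref{e:G-length2}. Finally, any $y\in V$ is a limit of $y_{k_j}\in V_{k_j}\subseteq \Gamma_{k_j}$, so Hausdorff convergence of $\Gamma_{k_j}$ to $\Gamma$ forces $y\in\Gamma$, proving $\Gamma\supseteq V$. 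A minor last step ensures $\Gamma\subseteq \overline{B(x_0,C^\star r_0)}$ rather than the enlarged ball used in the construction, by a small retraction or by refining the detours to stay closer to the convex hull of $V_k$.
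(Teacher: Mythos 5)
Your outline follows the same architecture as the paper's proof (local detours near each $v\in V_{k+1}$ guided by $\ell_{k+1,v}$, the Pythagorean $O(\alpha^2)$ gain for near-collinear points, and Corollary \ref{c:fund} to pass to the limit), but it stops exactly at the two points where the argument is genuinely hard, and the patch you sketch for them does not work. The problem is the length accounting at \emph{terminal} vertices: when $v\in V_{k+1}$ sits at the end of a local chain (no further vertex of $V_{k+1}$ within $30C^\star 2^{-(k+1)}r_0$ in one direction along $\ell_{k+1,v}$, while $V_k$ does continue in that direction), the new network must include a connector of length comparable to $2^{-k}r_0$ that is \emph{not} shadowed by any portion of $\Gamma_k$ and is \emph{not} of size $O(\alpha_{k+1,v}^2 2^{-(k+1)}r_0)$. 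Your proposal to absorb this "into a spanning cost bounded by a single term of size $\lesssim 2^{-k}r_0$ that later telescopes" fails: such terms occur once per terminal vertex per scale, and $\sum_k 2^{-k}r_0\cdot\#\{\text{terminal vertices at scale }k\}$ is not controlled by \eqref{e:asum2} and does not telescope against anything. The paper needs two separate devices here: a \emph{phantom length} budget of $3C^\star2^{-k}r_0$ prepaid at each terminal vertex and propagated inductively (so that a scale-$(k{+}1)$ terminal configuration is paid by phantom length released at scale $k$), and, for long jumps, \emph{bridges} whose pairwise disjoint \emph{cores} carry length $\geq 27C^\star 2^{-k}r_0$ inside $\Gamma_k$ itself, allowing a $\frac{25}{27}\Haus^1(\Gamma_k)$ absorption on the left-hand side of the length recursion. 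Neither mechanism is present or replaceable by your telescoping remark.

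The second gap is the overlap control in the charging argument. When you pay for a new edge $[v',v'']$ of $\Gamma_{k+1}$ by the portion of $\Gamma_k$ lying over $\pi_{k+1,v'}([v',v''])$, adjacent new edges use different reference lines $\ell_{k+1,v'}$ and $\ell_{k+1,v''}$, and the charged portions of the old curve can overlap. "Bounded overlap of the balls $B(v,65C^\star 2^{-(k+1)}r_0)$" only yields $\Haus^1(\Gamma_{k+1})\leq C\,\Haus^1(\Gamma_k)+\dots$ with $C>1$, which blows up under iteration. One must show the overlaps themselves have length $O(\alpha^2 2^{-k}r_0)$, which requires bounding the angle between consecutive reference lines by $O(\alpha)$ (this is the content of \eqref{e:graph2} in Lemma \ref{l:graph} and the "Fourth Estimate" of \S\ref{ss:sum-b}). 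Minor points: your Hausdorff-convergence argument via Cauchyness from $(V_\two)$ and $(V_\three)$ is fine (and slightly more direct than the paper's Blaschke-selection route), and the final "retraction" into $\overline{B(x_0,C^\star r_0)}$ is unnecessary, since all segments join points of the convex ball $B(x_0,C^\star r_0)$ and hence never leave it.
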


By viewing $\bigcup_{k=0}^\infty V_k$ as vertices of an abstract tree $\cT$, where each vertex $v\in V_{k+1}$ is connected by an edge to a nearest vertex in $V_k$, one may view Proposition \ref{p:curve2} as a criterion for being able to draw a rectifiable curve $\Gamma$ (i.e.~ a connected, compact set $\Gamma$ with $\Haus^1(\Gamma)<\infty$) through the leaves $V=\lim_{k\rightarrow\infty} V_k$ of $\cT$. Convergence of the sets $V_k$ in the Hausdorff metric is guaranteed by Lemma \ref{l:V}, whose proof we defer to Appendix \ref{ss:lemmas}.

\begin{lemma}\label{l:V} Let $B\subseteq\RR^n$ be a bounded set and let $V_0,V_1,\dots$ be a sequence of nonempty finite subsets of $B$. If the sequence satisfies ($V_\three$) for some $C^\star>0$ and $r_0>0$, then $V_k$ converges in the Hausdorff metric to a compact set $V\subseteq\overline{B}$.\end{lemma}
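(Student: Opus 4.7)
The plan is to produce the candidate limit set as the intersection of a decreasing sequence of compact sets and then use $(V_\three)$ to show that $V_k$ is close in the Hausdorff metric to this tail. The key observation is that iterating $(V_\three)$ gives a geometric-series bound: for any $v_j \in V_j$ with $j \geq k$, successively applying $(V_\three)$ produces a chain $v_j, v_{j-1}, \dots, v_k$ with $|v_{i-1} - v_i| < C^\star 2^{-i} r_0$, so by the triangle inequality
\begin{equation*}
|v_j - v_k| < \sum_{i=k+1}^{j} C^\star 2^{-i} r_0 < C^\star 2^{-k} r_0.
\end{equation*}
Thus every point of $\bigcup_{j \geq k} V_j$ lies within $C^\star 2^{-k} r_0$ of $V_k$.

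Next, I set $U_k := \overline{\bigcup_{j \geq k} V_j} \subseteq \overline{B}$. Since $\overline{B}$ is compact, each $U_k$ is a nonempty compact subset of $\overline{B}$, and clearly $U_{k+1} \subseteq U_k$. Define $V := \bigcap_{k \geq 0} U_k$; this is compact, and it is nonempty by the finite intersection property applied to the decreasing family of nonempty compact sets $U_k$. The displayed inequality above shows $U_k$ is contained in the closed $C^\star 2^{-k} r_0$-neighborhood of $V_k$, while $V_k \subseteq U_k$ trivially, so the Hausdorff distance satisfies
\begin{equation*}
d_H(V_k, U_k) \leq C^\star 2^{-k} r_0 \longrightarrow 0.
\end{equation*}

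It remains to check $d_H(U_k, V) \to 0$. One direction ($V \subseteq U_k$) is immediate. For the other, fix $\varepsilon > 0$ and let $N_\varepsilon$ be the open $\varepsilon$-neighborhood of $V$. The sets $U_k \setminus N_\varepsilon$ are compact and decreasing with empty intersection, so by the finite intersection property one of them, hence all subsequent ones, is empty; that is, $U_k \subseteq N_\varepsilon$ for all sufficiently large $k$. Combining the two estimates via the triangle inequality in the Hausdorff metric yields $d_H(V_k, V) \to 0$. The main (minor) subtlety is that the hypothesis provides only one-sided control (descendants near ancestors, not vice versa), which is why we cannot show $(V_k)$ is Cauchy directly and must instead route through the tail sets $U_k$.
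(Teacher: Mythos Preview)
Your proof is correct, but it takes a different route from the paper's. The paper invokes Blaschke's selection theorem to extract a subsequence $(V_{k_j})$ converging to some compact $V\subseteq\overline B$, and then uses the iterated $(V_\three)$ bound in both directions---$\excess(V_k,V_{k_j})$ small for $k_j<k$ and $\excess(V_{k_{j+1}},V_k)$ small for $k<k_{j+1}$---together with the triangle inequality for excess to upgrade subsequential convergence to full convergence. You instead construct the limit explicitly as $V=\bigcap_{k\geq 0}\overline{\bigcup_{j\geq k}V_j}$, and then run two separate estimates: $d_H(V_k,U_k)\to 0$ from the geometric-series bound, and $d_H(U_k,V)\to 0$ from the nested-compact-sets argument. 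Your approach is a bit more elementary in that it avoids citing Blaschke, and it has the minor bonus of identifying the limit set concretely; the paper's approach is a line or two shorter once Blaschke is taken for granted. Both handle the asymmetry you noted (only $(V_\three)$, not $(V_\two)$) cleanly.
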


The power of 2 in the quantity $\alpha_{k,v}^2$ in Proposition \ref{p:curve2} is a consequence of the following application of the Pythagorean formula. For a proof of Lemma \ref{l:graph}, see Appendix \ref{ss:lemmas}.

\begin{lemma}\label{l:graph} Suppose that $V\subseteq\RR^n$ is a $1$-separated set with $\#V\geq 2$ and there exist lines $\ell_1$ and $\ell_2$ and a number $0\leq \alpha\leq 1/16$ such that $$\dist(v,\ell_i)\leq \alpha\quad\text{for all $v\in V$ and $i=1,2$}.$$ Let $\pi_i$ denote the orthogonal projection onto $\ell_i$. There exist compatible identifications of $\ell_1$ and $\ell_2$ with $\RR$ such that $\pi_1(v')\leq \pi_1(v'')$ if and only if $\pi_2(v')\leq \pi_2(v'')$ for all $v',v''\in V$. If $v_1$ and $v_2$ are consecutive points in $V$ relative to the ordering of $\pi_1(V)$, then \begin{equation}\label{e:graph1} \Haus^1([u_1,u_2]) < (1+ 3\alpha^2)\cdot \Haus^1([\pi_1(u_1),\pi_1(u_2)]) \quad\text{for all }[u_1,u_2]\subseteq[v_1,v_2].\end{equation}
Moreover, \begin{equation}\label{e:graph2} \Haus^1([y_1,y_2]) < (1+12\alpha^2)\cdot\Haus^1([\pi_1(y_1),\pi_1(y_2)])\quad\text{for all }[y_1,y_2]\subseteq\ell_2.\end{equation}
\end{lemma}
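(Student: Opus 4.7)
The plan is to parametrize each $\ell_i$ isometrically as $\{p_i+te_i:t\in\RR\}$ with $|e_i|=1$, and view $\pi_i$ as the scalar map $\pi_i(v)=(v-p_i)\cdot e_i$ (so the Hausdorff measures in the statement translate to Euclidean distances $|u_1-u_2|$ and $|\pi_1(u_1)-\pi_1(u_2)|$). Each $v\in\RR^n$ decomposes as $v=p_i+\pi_i(v)e_i+\eta_i(v)$ with $\eta_i(v)\perp e_i$, and the hypothesis becomes $|\eta_i(v)|\leq\alpha$ for $v\in V$. Pythagoras then supplies
\begin{equation*}(\pi_i(v)-\pi_i(v'))^2=|v-v'|^2-|\eta_i(v)-\eta_i(v')|^2\geq|v-v'|^2-4\alpha^2,\end{equation*}
so $|\pi_i(v)-\pi_i(v')|\geq\sqrt{1-4\alpha^2}>0$ for distinct $v,v'\in V$. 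The first length inequality drops out by an affine parametrization: writing $u_j=v_1+s_j(v_2-v_1)$ with $s_j\in[0,1]$ reduces the ratio $|u_1-u_2|/|\pi_1(u_1)-\pi_1(u_2)|$ to $|v_1-v_2|/|\pi_1(v_1)-\pi_1(v_2)|\leq 1/\sqrt{1-4\alpha^2}$; the strict bound $1/\sqrt{1-4\alpha^2}<1+3\alpha^2$ follows by squaring, since $(1+3\alpha^2)^2(1-4\alpha^2)=1+2\alpha^2-15\alpha^4-36\alpha^6>1$ for $\alpha\leq 1/16$.

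The ordering then requires reconciling the two lines. I would fix any distinguished pair $v_*,v^*\in V$ (available since $\#V\geq 2$) and flip the orientations of $e_1,e_2$ if necessary so that $\pi_1(v^*)>\pi_1(v_*)$ and $\pi_2(v^*)>\pi_2(v_*)$; the previous display ensures neither difference is zero. I expect to deduce that this choice forces $e_1\cdot e_2\geq 1-8\alpha^2$: the unit vector $w=(v^*-v_*)/|v^*-v_*|$ makes angle $\theta_i$ with $e_i$ whose sine is at most $|\eta_i(v^*)-\eta_i(v_*)|/|v^*-v_*|\leq 2\alpha$, and the chosen orientations give $\cos\theta_i\geq\sqrt{1-4\alpha^2}>0$, so $e_1\cdot e_2\geq\cos(\theta_1+\theta_2)\geq(1-4\alpha^2)-4\alpha^2=1-8\alpha^2$. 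To promote this to compatibility on all of $V$, I would take arbitrary distinct $v,v'\in V$ and dot $v-v'=(\pi_1(v)-\pi_1(v'))e_1+(\eta_1(v)-\eta_1(v'))$ against $e_2$, obtaining
\begin{equation*}\pi_2(v)-\pi_2(v')=(\pi_1(v)-\pi_1(v'))(e_1\cdot e_2)+(\eta_1(v)-\eta_1(v'))\cdot e_2.\end{equation*}
The main term has magnitude at least $\sqrt{1-4\alpha^2}(1-8\alpha^2)$, while the error is at most $2\alpha$, and a direct check shows the former exceeds the latter for $\alpha\leq 1/16$, so the signs of $\pi_1(v)-\pi_1(v')$ and $\pi_2(v)-\pi_2(v')$ coincide.

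The second length bound follows at once from the angle estimate: any $y_1,y_2\in\ell_2$ satisfy $y_1-y_2=se_2$ for some scalar $s$, hence $|y_1-y_2|=|s|$ and $\pi_1(y_1)-\pi_1(y_2)=s(e_1\cdot e_2)$, giving ratio $1/(e_1\cdot e_2)\leq 1/(1-8\alpha^2)$. A parallel squaring computation $(1+12\alpha^2)(1-8\alpha^2)=1+4\alpha^2-96\alpha^4>1$ for $\alpha\leq 1/16$ yields the strict bound $1+12\alpha^2$. The main obstacle in this proof is the simultaneous orientation of $e_1$ and $e_2$ in the middle step, which links the two projections together; everything else is Pythagoras and elementary numerical inequalities.
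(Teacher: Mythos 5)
Your proof is correct, but it reaches all three conclusions by a genuinely different route than the paper. For \eqref{e:graph1} the paper views $V$ as lying on a piecewise linear graph $g_1$ over $\ell_1$ with $\|\nabla g_1\|_\infty\leq 2\alpha/\sqrt{1-4\alpha^2}$ and invokes the area formula, getting the factor $1+\tfrac{2\alpha^2}{1-4\alpha^2}$; your affine reparametrization of $[u_1,u_2]\subseteq[v_1,v_2]$ reduces everything to the single Pythagorean estimate on the pair $(v_1,v_2)$ and gives the (slightly sharper) factor $(1-4\alpha^2)^{-1/2}$ with no machinery at all. For the compatibility of orderings, the paper argues by contradiction from a hypothetical triple $v,v',v''$ whose $\pi_1$- and $\pi_2$-orders disagree, using near-additivity of projected distances; you instead fix orientations on one distinguished pair, deduce the quantitative angle bound $e_1\cdot e_2\geq 1-8\alpha^2$, and then compare signs of $\pi_1(v)-\pi_1(v')$ and $\pi_2(v)-\pi_2(v')$ directly for every pair — this is arguably more complete, since the paper's contradiction only treats one pattern of order violation explicitly. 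For \eqref{e:graph2} the paper manufactures two points of $\ell_2$ that are $2\alpha$-close to $\ell_1$ and $(1-2\alpha)$-separated and reruns the graph estimate with $\beta=2\alpha/(1-2\alpha)$, whereas you read the ratio off as $1/(e_1\cdot e_2)$ from the angle bound you already have; notably, the body of the paper later extracts exactly this cosine bound \emph{from} \eqref{e:graph2}, so your argument runs that implication in the cleaner direction. Your numerical verifications ($(1+3\alpha^2)^2(1-4\alpha^2)>1$ and $(1+12\alpha^2)(1-8\alpha^2)>1$ for $0<\alpha\leq 1/16$) check out. The only blemish, shared with the paper's own proof, is that the strict inequalities in \eqref{e:graph1} and \eqref{e:graph2} degenerate to equalities when $\alpha=0$ or when the segments are points, which is immaterial for the applications.
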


In \S\S \ref{ss:overview}--\ref{ss:connected} and \S \ref{ss:length}, we prove Proposition \ref{p:curve2} assuming Lemmas \ref{l:V} and \ref{l:graph}. To begin, in \S \ref{ss:overview}, we make some reductions and give a high level overview of the proof of the proposition. Next, in \S \ref{ss:curves}, we give a self-contained description of rectifiable curves $\Gamma_k$ that contain $V_k$ and converge in the Hausdorff metric to the curve $\Gamma$ in the statement of the proposition. By construction, the sets $\Gamma_k$ are evidently closed. In \S \ref{ss:connected}, we verify that the sets $\Gamma_k$ are connected. In \S\S\ref{ss:phantom}--\ref{ss:sum-b} of the next section, we make detailed estimates on the length of $\Gamma_k$, which yield the estimate \eqref{e:G-length2} on the length of $\Gamma$. Finally, to complete the proof of Proposition \ref{p:curve2}, we supply proofs of Lemmas \ref{l:V} and \ref{l:graph} in Appendix \ref{ss:lemmas}.

\subsection{Overview of the proof of Proposition \ref{p:curve2}}\label{ss:overview}
By scale invariance, it suffices to prove the proposition with $r_0=1$. Let $n\geq 2$ and $C^\star>1$ be given, let $x_0\in\RR^n$, let $r_0=1$, and assume that $V_0,V_1,V_2,\dots$ is a sequence of nonempty finite sets in $B(x_0,C^\star)$ satisfying ($V_I$), ($V_\two$), ($V_\three$). By Lemma \ref{l:V}, there exists a compact set $V\subseteq \overline{B(x_0,C^\star)}$ such that $V_{k}$ converges to $V$ in the Hausdorff metric as $k\rightarrow\infty$. Suppose that for all $k\geq 1$ and $v\in V_k$ we are given a straight line $\ell_{k,v}$ in $\RR^n$ and a number $\alpha_{k,v}\geq 0$ satisfying \eqref{e:alpha2} and \eqref{e:asum2}. If $\#V_k=1$ for infinitely many $k$, then $V$ is a singleton and the conclusion is trivial. Thus, we shall assume that $\#V_k\geq 2$ for all sufficiently large $k$. Let $k_0\geq 0$ be the least index such that $\#V_k\geq 2$ for all $k\geq k_0$.

To complete the proof, we will construct a sequence $\Gamma_{k_0}, \Gamma_{k_0+1},\Gamma_{k_0+2},\dots$ of closed, connected subsets of $\overline{B(x_0,C^\star)}$ such that $\Gamma_k\supseteq V_k$ and \begin{equation}\label{e:goal} \Haus^1(\Gamma_k) \leq C\left(2^{-k_0} + \sum_{j={k_0+1}}^k\sum_{v\in V_j} \alpha_{j,v}^2 2^{-j} \right)\quad\text{for all }k\geq k_0+1,\end{equation} where $C>1$ depends only on $n$ and $C^\star$. By Corollary \ref{c:fund}, there exists a compact, connected set $\Gamma$ and a subsequence $(\Gamma_{k_j})_{j=1}^\infty$ of $(\Gamma_k)_{k=k_0}^\infty$ such that $\Gamma_{k_j}\rightarrow \Gamma$ in the Hausdorff metric as $j\rightarrow\infty$ and $\Gamma$ satisfies \eqref{e:G-length2} with $r_0=1$ and implicit constant $32C$. We note that $V\subseteq \Gamma\subseteq\overline{B(x_0,C^\star)}$, because $V_{k_j}\subseteq \Gamma_{k_j}\subseteq \overline{B(x_0,C^\star)}$, $V_{k_j}\rightarrow V$, and $\Gamma_{k_j}\rightarrow \Gamma$.

In the argument that follows, the points in $\bigcup_{k=k_0}^\infty V_k$ are called \emph{vertices}. A vertex $x\in V_k$ is said to belong to \emph{generation $k$}. Property ($V_I$) states that vertices of the same generation are uniformly separated. Property ($V_\two$) ensures that every vertex is relatively close to some vertex of the next generation. And property ($V_\three$) guarantees that every vertex of generation $k\geq {k_0+1}$ is relatively close to some vertex of the previous generation.  By associating each vertex to a nearest vertex of the previous generation, the set of all vertices may be viewed as a tree with $\#V_{k_0}$ roots.

\subsection{Description of the curves}\label{ss:curves}
Each curve $\Gamma_k$ will be defined to be the union of finitely many closed line segments $[v',v'']$ (``edges") between vertices $v',v''\in V_k$ and closed sets  $B[j,w',w'']$ (``bridges") that connect vertices $w',w''\in V_j$ for some $k_0\leq j\leq k$ and pass through vertices of generation $j'$ nearby $w'$ and $w''$ for every $j'>j$. Bridges will be frozen in the sense that once a bridge appears in some $\Gamma_k$, the bridge remains in $\Gamma_{k'}$ for all $k'\geq k$. If an edge $[v',v'']$ is included in $\Gamma_k$, then $|v-v''|<30 C^\star 2^{-k}$. If a bridge $B[k,v',v'']$ is included in $\Gamma_k$, then $|v'-v''|\geq 30 C^\star 2^{-k}$.

The precise construction depends on a few auxiliary choices. First, choose a small parameter $0<\varepsilon\leq 1/32$ so that the conclusions of Lemma \ref{l:graph} hold for $\alpha=2\varepsilon$. Second, for each generation $k\geq k_0$ and vertex $v\in V_k$, define an \emph{extension $E[k,v]$} to vertices in future generations as follows: \begin{quotation} Given any $v\in V_k$, pick a sequence of vertices $v_{1},v_{2},\dots$ inductively so that $v_{1}$ is a vertex in $V_{k+1}$ that is closest to $v_0=v$, $v_2$ is a vertex in $V_{k+2}$ that is closest to $v_1$, and so on. Then define $$E[k,v]:=\overline{\bigcup_{i=0}^\infty [v_i,v_{i+1}]}.$$\end{quotation} Once extensions have been chosen, for each generation $k\geq k_0$ and for each pair of vertices $v',v''\in \Gamma_k$, we define the \emph{bridge} $B[k,v',v'']$ by $$B[k,v',v''] := E[k,v']\cup [v',v'']\cup E[k,v''].$$ We remark that in the special case $V_{k+1}\supseteq V_k$ for all $k\geq k_0$, the extension $E[k,v]=\{v\}$ and the bridge $B[k,v',v'']=[v',v'']$.

To define the initial curve $\Gamma_{k_0}$, consider each pair of vertices $v',v''\in V_{k_0}$. If $|v'-v''|< 30C^\star 2^{-k_0}$, then we include the edge $[v',v'']$ in $\Gamma_{k_0}$. Otherwise, if $|v'-v''| \geq 30C^\star 2^{-k_0}$, then we include the bridge $B[k_0,v',v'']$ in $\Gamma_{k_0}$. That is, \begin{equation}\label{e:Gamma0-def}\Gamma_{k_0} := \bigcup_{\stackrel{v',v''\in V_{k_0}}{|v'-v''|<30 C^\star 2^{-k_0}}}[v',v'']\cup \bigcup_{\stackrel{v',v''\in V_{k_0}}{|v'-v''|\geq 30 C^\star 2^{-k_0}}}B[k_0,v',v''].\end{equation}
Suppose that $\Gamma_{k_0},\dots,\Gamma_{k-1}$ have been defined for some $k\geq k_0+1$. In order to define the next set $\Gamma_k$, we first describe $\Gamma_{k,v}$, the ``new part" of $\Gamma_k$ nearby $v$, for every $v\in V_k$. Then we declare  $\Gamma_k$ to be the union of new parts and old bridges. That is, \begin{equation}\label{e:Gamma-def}\Gamma_k:=\bigcup_{v\in V_k} \Gamma_{k,v} \cup \bigcup_{j=k_0}^{k-1} \bigcup_{B[j,w',w'']\subseteq \Gamma_{j}} B[j,w',w''].\end{equation}
Let $v$ be an arbitrary vertex in $V_k$. The definition of $\Gamma_{k,v}$ splits into two cases.

\textbf{Case I:} Suppose that $\alpha_{k,v}\geq \varepsilon$. To define $\Gamma_{k,v}$, we mimic the construction of the initial curve $\Gamma_{k_0}$. Consider every pair of vertices $v',v''\in V_k\cap B(v,65 C^\star 2^{-k})$. If $|v'-v''|< 30C^\star 2^{-k}$, then we include the edge $[v',v'']$ in $\Gamma_{k,v}$. Otherwise, if $|v'-v''|\geq 30 C^\star 2^{-k}$, then we include the bridge $B[k,v',v'']$ in $\Gamma_{k,v}$. This ends the description of $\Gamma_{k,v}$ in \textbf{Case I}.

\textbf{Case II:} Suppose that $\alpha_{k,v}<\varepsilon$. Identify the straight line $\ell_{k,v}$ with $\RR$ (in particular, pick directions ``left'' and ``right'') and let $\pi_{k,v}$ denote the orthogonal projection onto $\ell_{k,v}$.
 By Lemma \ref{l:graph} and ($V_I$), both $V_k\cap B(v,65 C^\star 2^{-k})$ and $V_{k-1}\cap B(v, 65C^\star2^{-k})$ are arranged linearly along $\ell_{k,v}$. Put $v_0=v\in V_k$ and let $$v_{-l},\dots, v_{-1},v_0,v_1,\dots,v_m$$ denote the vertices in $V_k\cap B(v, 65 C^\star 2^{-k})$, arranged from left to right according to the relative order of $\pi_{k,v}(v_i)$ in $\ell_{k,v}$ (identified with $\RR$), where $l,m\geq 0$.
We start by  describing the ``right half" $\Gamma_{k,v}^R$ of $\Gamma_{k,v}$, where $$\Gamma_{k,v}^R= \Gamma_{k,v}\cap \pi_{k,v}^{-1}([\pi(v_0),\infty)).$$ There will be three subcases.
Starting from $v_0$ and working to the right, include each closed line segment $[v_i,v_{i+1}]$ as an edge in $\Gamma_{k,v}^R$ until $|v_{i+1}-v_i|\geq 30 C^\star 2^{-k}$, $v_{i+1}\not\in B(v,30 C^\star 2^{-k})$, or $v_{i+1}$ is undefined (because $i=m$). Let $t\geq 0$ denote the number of edges that were included in $\Gamma_{k,v}^R$.

\textbf{Case II-NT:} If $t\geq 1$ (that is, at least one edge was included), then we say that the vertex \emph{$v$ is not terminal to the right} and are done describing $\Gamma_{k,v}^R$.

\textbf{Case II-T1 and Case II-T2:} If $t=0$ (that is, no edges were included), then we say that the vertex \emph{$v$ is terminal to the right} and continue our description of $\Gamma_{k,v}^R$, splitting into subcases depending on how $\Gamma_{k-1}$ looks nearby $v$. Let $w_v$ be a vertex in $V_{k-1}$ that is closest to $v$. Enumerate the vertices in $V_{k-1}\cap B(v,65 C^\star 2^{-k})$ starting from $w_v$ and moving right (with respect to the identification of $\ell_{k,v}$ with $\RR$) by $$w_v=w_{v,0},w_{v,1},\dots,w_{v,s}.$$ Let $w_{v,r}$ denote the rightmost vertex in $V_{k-1}\cap B(v,C^\star 2^{-(k-1)})$. There are two alternatives: \begin{enumerate}
\item[\textbf{T1:}] If $r=s$ or if $|w_{v,r}-w_{v,r+1}|\geq 30 C^\star 2^{-(k-1)}$, then we set $\Gamma_{k,v}^R=\{v\}$.
\item[\textbf{T2:}] If $|w_{v,r}-w_{v,r+1}|<30 C^\star 2^{-(k-1)}$, then $v_{1}$ exists by ($V_\two$) (and $|v-v_1|\geq 30 C^\star 2^{-k}$). In this case, we set $\Gamma_{k,v}^R=B[k,v,v_1]$.
\end{enumerate} The first alternative defines \textbf{Case II-T1}. The second alternative defines \textbf{Case II-T2}. This concludes the description of $\Gamma_{k,v}^R$.

Next, define the ``left half" $\Gamma_{k,v}^L=\Gamma_{k,v}\cap \pi_{k,v}^{-1}((-\infty,\pi_{k,v}(v_0)])$ of $\Gamma_{k,v}$ symmetrically. Also, define the terminology \emph{$v$ is not terminal to the left} and \emph{$v$ is terminal to the left} by analogy with the corresponding terminology to the right. Having separately defined both the ``left half" $\Gamma_{k,v}^L$ and the ``right half" $\Gamma_{k,v}^R$ of $\Gamma_{k,v}$, we now declare $$\Gamma_{k,v}:=\Gamma_{k,v}^L\cup \Gamma_{k,v}^R.$$ This concludes the description of $\Gamma_{k,v}$ in \textbf{Case II}.

\subsection{Connectedness}\label{ss:connected} By construction, for all $k\geq k_0$, every point $x\in\Gamma_k$ is connected to $V_k$ inside $\Gamma_k$, because $x$ belongs to an edge $[v',v'']$ between vertices $v',v''\in V_k$ or $x$ belongs to a bridge $B[j,u',u'']$ between vertices $u',u''\in V_{j}$ for some $k_0\leq j\leq k$. Thus, to prove that $\Gamma_k$ is connected, it suffices to prove that every pair of points in $V_k$ can be connected inside $\Gamma_k$. We argue by double induction.

The set $V_{k_0}$ is connected in $\Gamma_{k_0}$, because $\Gamma_{k_0}$ contains $[v',v'']$ or $B[k_0,v',v'']$ for every pair of vertices $v',v''\in V_{k_0}$. In subsequent generations, if $v',v''\in V_k$ and $|v'-v''|<30C^\star 2^{-k}$, then $v'$ and $v''$ are connected in $\Gamma_k$. This can be seen by inspection of the various cases in the definition of $\Gamma_{k,v'}$.
Suppose for induction that $V_{k-1}$ is connected in $\Gamma_{k-1}$ for some $k\geq k_0+1$. Let $x$ and $y$ be arbitrary vertices in $V_k$ and let $w_x, w_y\in V_{k-1}$ denote vertices that are closest to $x$ and $y$, respectively. Because $V_{k-1}$ is connected in $\Gamma_{k-1}$, $w_x$ and $w_y$ can be joined in $\Gamma_{k-1}$ by a tour of $p+1$ vertices in $V_{k-1}$, say $$w_0=w_x, w_1, \dots, w_{p}=w_y,$$ where each pair $w_i,w_{i+1}$ of consecutive vertices is connected in $\Gamma_{k-1}$ by an edge $[w_i,w_{i+1}]$ or by a bridge $B[j, u',u'']$ for some $k_0\leq j\leq k-1$ and $u',u''\in V_j$ with the property that $w_i\in E[j,u']$ and $w_{i+1}\in E[j,u'']$.

Set $v_0=x$, which satisfies $|v_0-w_0|=|x-w_x|< C^\star 2^{-k}$ by $(V_\three)$. Suppose for induction that $0\leq t\leq p-1$ and there exists a vertex $v_t\in V_k$ such that $|v_t-w_t| < C^\star 2^{-(k-1)}$ and $v_0$ and $v_t$ are connected in $\Gamma_k$. If $t\leq p-2$, choose $v_{t+1}$ to be any vertex in $V_k$ such that $|v_{t+1}-w_{t+1}| < C^\star 2^{-(k-1)}$, which exists by $(V_\two)$. Otherwise, if $t=p-1$, set $v_{t+1}=y$, which also satisfies $|v_{t+1}-w_{t+1}|=|y-w_y|< C^\star 2^{-(k-1)}$ by $(V_\three)$. We will now show that $v_t$ and $v_{t+1}$ are connected in $\Gamma_k$, and thus, $v_0$ and $v_{t+1}$ are connected in $\Gamma_k$. The proof splits into two cases, depending on whether the vertices $w_t$ and $w_{t+1}$ in $V_{k-1}$ are connected by a bridge or an edge.

First, suppose that $w_t,w_{t+1}\in B[j,u',u'']$ for some $k_0\leq j\leq k-1$ and some $u',u''\in V_j$ with $w_t\in E[j,u']$ and $w_{t+1}\in E[j,u'']$. Let $z'$ denote the unique point in $V_k\cap E[j,u']$ and let $z''$ denote the unique point in $V_{k}\cap E[j,u'']$. Then $z',z''\in B[j,u',u'']$. Hence $z'$ and $z''$ are connected in $\Gamma_k$, because $B[j,u',u'']\subseteq\Gamma_k$. Next, by definition of the extensions, $|z'-w_t|< C^\star 2^{-(k-1)}$ and $|z''-w_{t+1}| < C^\star 2^{-(k-1)}$. Thus, $$|v_t-z'| \leq |v_t-w_t|+|w_t-z'| <2 C^\star 2^{-(k-1)}<30 C^\star 2^{-k},$$ and similarly, $|v_{t+1}-z''|< 30 C^\star 2^{-k}$. It follows that $v_t$ is connected to $z'$ in $\Gamma_k$ and $v_{t+1}$ is connected to $z''$ in $\Gamma_k$. Therefore, concatenating paths, $v_{t}$ is connected to $v_{t+1}$ in $\Gamma_k$.

Secondly, suppose that $[w_t,w_{t+1}]$ is an edge in $\Gamma_{k-1}$. Then $|w_t-w_{t+1}| < 30 C^\star 2^{-(k-1)}$. Hence $$|v_t-v_{t+1}| \leq |v_t-w_t|+|w_t-w_{t+1}|+|w_{t+1}-v_{t+1}| < 32 C^\star 2^{-(k-1)}=64 C^\star 2^{-k}.$$ Because $|v_t-v_{t+1}|< 65 C^\star 2^{-k}$, it follows that $v_t$ is connected to $v_{t+1}$ in $V_k$ if $\alpha_{k,v_t}\geq \varepsilon$
by \textbf{Case I} in the definition of $\Gamma_{k,v_t}$.
On the other hand, suppose that $\alpha_{k,v_t}<\varepsilon$. Then $V_k\cap B(v_t,64C^\star 2^{-k})$ may be arranged linearly according to their relative order under orthogonal projection onto $\ell_{k,v_t}$. Label the vertices in $V_k\cap B(v_t, 64C^\star 2^{-k})$ lying between $v_t$ and $v_{t+1}$ inclusively, according to this order, say $$z_0=v_t,z_1,\dots,z_q=v_{t+1}.$$ Because $(1+3\varepsilon^2)64<65$, Lemma \ref{l:graph} ensures that $v_t,v_{t+1}\in B(z_i, 65 C^\star 2^{-k})$ for all $1\leq i\leq q$ (see Figure \ref{fig:1}).
\begin{figure}
\includegraphics[width=\textwidth]{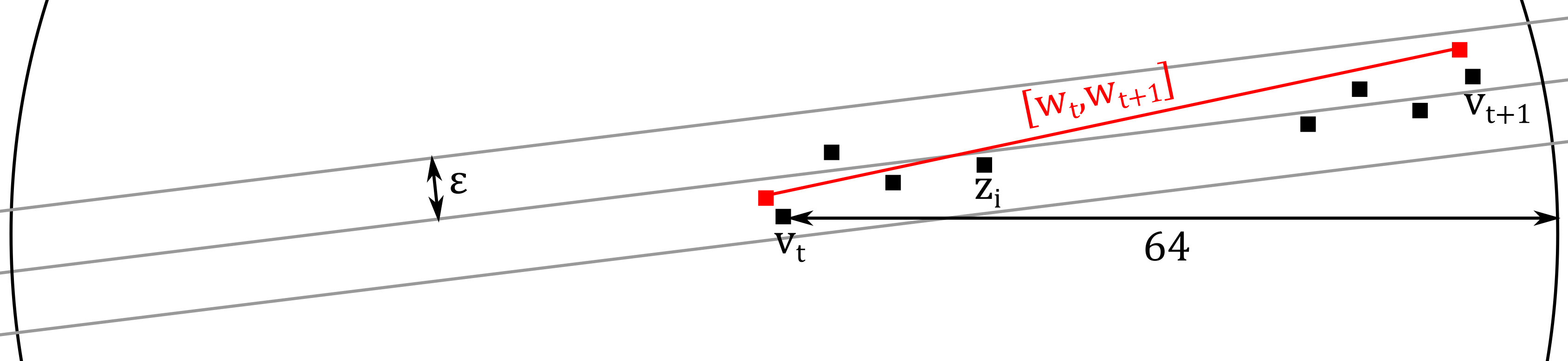}
\caption{Truncated view of $[w_t,w_{t+1}]$ and $z_0=v_t,z_1,\dots,z_q=v_{t+1}$ in $B(v_t,64 C^\star 2^{-k})$ when $\alpha_{k,v_t}<\varepsilon$, rescaled so that $C^\star 2^{-k}=1$.}
\label{fig:1}
\end{figure} Thus, $v_t$ and $v_{t+1}$ are connected in $V_k$ if $\alpha_{k,z_i}\geq \varepsilon$ for some $1\leq i\leq q$ by \textbf{Case I} in the definition of $\Gamma_{k,z_i}$. Finally, suppose that $\alpha_{k,z_i}<\varepsilon$ for all $1\leq i\leq q$. Because $\Gamma_{k-1}$ contains the \emph{edge} $[w_t,w_{t+1}]$, the set $\Gamma_{k,z_i}$ contains $B[k,z_i,z_{i+1}]$ or $[z_i,z_{i+1}]$ for each $0\leq i\leq q-1$, depending on whether $z_i$ is terminal or $z_i$ is not terminal in the direction from $z_i$ to $z_{i+1}$. (In particular, in each instance alternative \textbf{T1} does not occur.) Hence $z_i$ and $z_{i+1}$ are connected in $\Gamma_k$ for all $0\leq i\leq q-1$. Therefore, concatenating paths, we see that $v_t=z_0$ and $v_{t+1}=z_q$ are connected in $\Gamma_k$ in this case, as well.

By induction, $v_0$ and $v_t$ are connected in $V_k$ for all $1\leq t\leq p$. In particular, $x=v_0$ and $y=v_p$ are connected in $V_k$. Since $x$ and $y$ were arbitrary vertices in $V_k$, it follows that $V_k$ is connected in $\Gamma_k$. Therefore, by induction, $\Gamma_k$ is connected for all $k\geq k_0$.

\section{Drawing rectifiable curves II: length estimates}\label{ss:length} We continue to adopt the notation and assumptions of \S\S \ref{ss:overview}--\ref{ss:connected}.

Our goal in this section is to verify that $\Gamma_{k_0+1},\Gamma_{k_0+2},\dots$ satisfy the estimate \eqref{e:goal}. Roughly speaking, we would like to bound the length of $\Gamma_{k_0}$ by $C 2^{-k_0}$ and to bound $\Haus^1(\Gamma_k)$ by $\Haus^1(\Gamma_{k-1})+C \sum_{v\in V_k} \alpha_{k,v}^2 2^{-k}$ for all $k\geq k_0+1$, for some $C$ independent of $k$. In other words, we want to ``pay" for the length of the ``new curve" $\Gamma_k$ with the length of ``old curve" $\Gamma_{k-1}$ and the sum $C\sum_{v\in V_k}\alpha_{k,v}^2 2^{-k}$. This plan works more or less, except that more work is required to pay for an edge $[v',v'']$ in $\Gamma_k$ when the vertex $v'$ or $v''$ is close to a terminal vertex in \textbf{Case II} of the construction, because the old curve may not ``span" the new edge $[v',v'']$. To handle this extra complication, we introduce a mechanism to ``prepay" length called phantom length. The idea for phantom length comes from Jones' original traveling salesman construction (see \cite{Jones-TSP}).

\subsection{Phantom length}\label{ss:phantom} Below it will be convenient to have notation to refer to the vertices appearing in a bridge. For each extension $E[k,v]$, say with $E[k,v]$ defined by $E[k,v]=\overline{\bigcup_{i=0}^\infty [v_i,v_{i+1}]}$, we define the corresponding \emph{extension index set} $I[k,v]$ by $$I[k,v]=\{(k+i,v_i):i\geq 0\}.$$ For each bridge $B[k,v',v'']$, we define the corresponding \emph{bridge index set} $I[k,v',v'']$ by $$I[k,v',v'']=I[k,v']\cup I[k,v''].$$

For all generations $k\geq k_0$ and for all vertices $v\in V_k$, we define the \emph{phantom length} $p_{k,v} := 3 C^\star 2^{-k}$.
If $B[k,v',v'']$ is a bridge between vertices $v',v''\in V_k$, then the totality $p_{k,v',v''}$ of phantom length associated to pairs in $I[k,v',v'']$ is given by $$p_{k,v',v''}:=3C^\star\left(2^{-k}+2^{-(k+1)}+\cdots\right)+3C^\star\left(2^{-k}+2^{-(k+1)}+\cdots\right) =12 C^\star 2^{-k}.$$
During the proof we will keep tally of phantom length at certain pairs $(k,v)$ with $v\in V_k$ as an accounting tool.

We initialize $\phan(k_0)$, the \emph{index set} of phantom length tracked at stage $k_0$, to be the set of all pairs $(j,u)$ such that the vertex $u\in V_{j}$ appears in the definition of $\Gamma_{k_0}$, including all vertices in $V_{k_0}$ and all vertices in extensions in bridges in $\Gamma_{k_0}$. That is, $$\phan(k_0):= \{(k_0,v):v\in V_{k_0}\}\cup \bigcup_{B[k_0,v',v'']\subseteq \Gamma_{k_0}} I[k_0,v',v''].$$ Suppose that $\phan(k_0),\dots,\phan(k-1)$ have been defined for some $k\geq k_0+1$, where the index set $\phan(k-1)$ satisfies the following two properties.

\begin{quotation}\emph{Bridge property:} If a bridge $B[k-1,w',w'']$ is included in $\Gamma_{k-1}$, then $\phan(k-1)$ contains $I[k-1,w',w'']$.\end{quotation}

\begin{quotation}\emph{Terminal vertex property:} Let $w\in V_{k-1}$ and let $\ell$ be a line such that $$\dist(y,\ell)<\varepsilon 2^{-(k-1)}\quad\text{for all }y\in V_{k-1}\cap B(w,30 C^\star 2^{-(k-1)}).$$ Arrange $V_{k-1}\cap B(w,30 C^\star 2^{-(k-1)})$ linearly with respect to the orthogonal projection $\pi_\ell$ onto $\ell$. If there is no vertex $w'\in V_{k-1}\cap B(w,30 C^\star 2^{-(k-1)})$ to the ``left" of $w$ or to the ``right" of $w$, then $(k-1,w)\in \phan(k-1)$. That is, identifying $\ell$ with $\RR$, if  $$V_{k-1}\cap B(w,30 C^\star 2^{-(k-1)})\cap \pi_\ell^{-1}((-\infty,\pi_\ell(w)))=\emptyset\text{ or}$$ $$V_{k-1}\cap B(w,30 C^\star 2^{-(k-1)})\cap \pi_\ell^{-1}((\pi_\ell(w),\infty))=\emptyset,$$ then $(k-1,w)\in \phan(k-1)$. \end{quotation}
(Note that $\phan(k_0)$ satisfies the terminal vertex property trivially, since $\phan(k_0)$ includes $(k_0,v)$ for every $v\in V_{k_0}$.) We form $\phan(k)$ starting from $\phan(k-1)$, as follows. Initialize the set $\phan(k)$ to be equal to $\phan(k-1)$. Next, delete all pairs $(k-1,w)$ and $(k,\tilde v)$ appearing in $\phan(k-1)$ from $\phan(k)$. Lastly, for each vertex $v\in V_k$, include additional pairs in $\phan(k)$ according to the following rules. \begin{itemize}
\item \textbf{Case I:} Suppose that $\alpha_{k,v}\geq \varepsilon$. Include $(k,v')$ in $\phan(k)$ for all vertices $v'\in V_k\cap B(v,65 C^\star 2^{-k})$ and include $I[k,v',v'']$ as a subset of $\phan(k)$ for every bridge $B[k,v',v'']$ in $\Gamma_{k,v}$.
\item \textbf{Case II-NT:} Suppose that $\alpha_{k,v}<\varepsilon$, and $\Gamma_{k,v}^R$ or $\Gamma_{k,v}^L$ is defined by \textbf{Case II-NT}. This case does not generate any phantom length.
\item \textbf{Case II-T1:} Suppose that $\alpha_{k,v}<\varepsilon$, and $\Gamma_{k,v}^R$ or $\Gamma_{k,v}^L$ is defined by \textbf{Case II-T1}. Include $(k,v)\in \phan(k)$.
\item \textbf{Case II-T2:} Suppose that $\alpha_{k,v}<\varepsilon$, and $\Gamma_{k,v}^R$ or $\Gamma_{k,v}^L$ is defined by \textbf{Case II-T2}.
When $\Gamma_{k,v}^R$ is defined by \textbf{Case II-T2}, include $I[k,v,v_1]$ as a subset of $\phan(k)$. When $\Gamma_{k,v}^L$ is defined by \textbf{Case II-T2}, include $I[k,v_{-1},v]$ as a subset of $\phan(k)$. In particular, note that $(k,v)$ is included in $\phan(k)$.
\end{itemize}
The phantom length associated to deleted pairs will be available to pay for the length of edges in $\Gamma_k$ near terminal vertices in $V_k$ and to pay for the phantom length of pairs in $\phan(k)\setminus\phan(k-1)$.

It is clear that $\phan(k)$ satisfies the bridge property. To check that $\phan(k)$ satisfies the terminal vertex property, let $v\in V_{k}$ and let $\ell$ be a line such that $$\dist(v,\ell)<\varepsilon 2^{-k}\quad\text{for all }y\in V_{k}\cap B(v,30 C^\star 2^{-k}).$$ Identify $\ell$ with $\RR$ and arrange $V_{k}\cap B(v,30 C^\star 2^{-k})$ linearly with respect to the orthogonal projection $\pi_\ell$ onto $\ell$. Assume that there is no vertex $v'\in V_{k}\cap B(v,30 C^\star 2^{-k})$ to the ``left" of $v$ or to the ``right" of $v$ with respect the ordering under $\pi_\ell$. If $\alpha_{k,v}\geq \varepsilon$, then $(k,\tilde v)$ was included in $\phan(k)$ for every $\tilde v\in V_k\cap B(v,65 C^\star 2^{-k})$. In particular, $(k,v)$ is in $\phan(k)$. Otherwise, if $\alpha_{k,v}<\varepsilon$, then $V_{k}\cap B(v, 30 C^\star 2^{-k})$ is also linearly ordered with respect to the orthogonal projection onto $\ell_{k,v}$. By Lemma \ref{l:graph}, the two orderings agree modulo the choice of orientation for $\ell$ and $\ell_{k,v}$. In this case, the assumption that there is no vertex $v'\in V_{k}\cap B(v,30 C^\star 2^{-k})$ to the ``left" of $v$ or to the ``right" of $v$ translates to the statement that $\Gamma_{k,v}^L$ or $\Gamma_{k,v}^R$ is defined by \textbf{Case II-T1} or \textbf{Case II-T2}, whence $(k,v)$ was included in $\phan(k)$. Therefore, $\phan(k)$ satisfies the terminal vertex property.

\subsection{Core of a bridge} \label{ss:core} For every bridge $B[k,v',v'']$ between vertices $v',v''\in V_k$, we define the \emph{core} $C[k,v',v'']$ of the bridge to be $$C[k,v',v'']=\frac{9}{10}[v',v''].$$ That is, $C[k,v',v'']$ is the interval of length $9/10$ of the length of $[v',v'']$ that is concentric with $[v',v'']$.
Because $\Haus^1(B[k,v',v''])\geq 30 C^\star 2^{-k}$ for any bridge $B[k,v',v'']$ included in the construction, the corresponding core has significant length: \begin{equation}\label{e:core-length}\Haus^1(C[k,v',v''])\geq 27 C^\star 2^{-k}.\end{equation}

For all $k\geq k_0+1$, let $\cores(k)$ denote the set of cores $C[k,v',v'']$ of bridges $B[k,v',v'']$ between vertices $v',v''\in V_k$ that were included in $\Gamma_k$ in \textbf{Case II-T2} for some $\Gamma_{k,v}^R$ or $\Gamma_{k,v}^L$. We claim that cores in $\bigcup_{j=k_0+1}^\infty \cores(j)$ are pairwise disjoint. To see this, suppose that $C[k,v',v'']\in\cores(k)$ and $C[j,w',w'']\in\cores(j)$ for some $j\geq k\geq k_0+1$. Because the bridges $B[k,v',v'']$ and $B[j,w',w'']$ arise in \textbf{Case II-T2} of the construction, we have \begin{equation}\label{e:roughlength} 30 C^\star 2^{-k} \leq |v'-v''| < 64 C^\star 2^{-k}\quad\text{and}\quad 30C^\star 2^{-j} \leq |w'-w''|< 64 C^\star 2^{-j},\end{equation} where the upper bounds follow from the bound $|w_{v,r}-w_{v,r+1}|<30 C^\star 2^{-(k-1)}$ appearing in \textbf{Case II-T2} of the definition of $\Gamma_{k,v}^R$. Now, by $(V_\three)$, $V_j\cap B(v',64 C^\star 2^{-k})$ is contained in a $C^\star 2^{-k}$ neighborhood of vertices in $V_k\cap B(v',65 C^\star 2^{-k})$ (see Figure \ref{fig:2}).
\begin{figure}
\includegraphics[width=\textwidth]{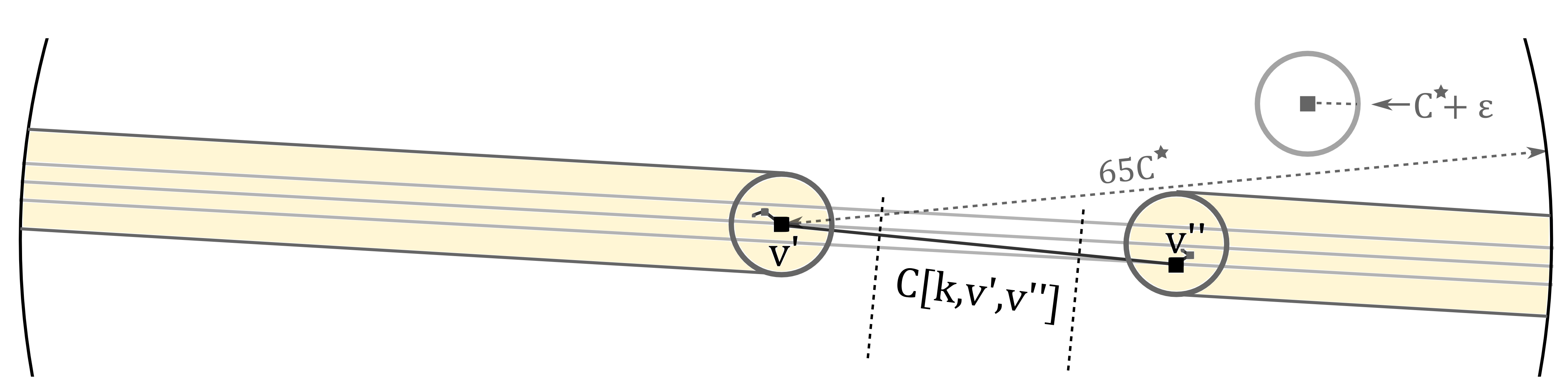}
\caption{Possible location of future vertices nearby a bridge $B[k,v',v'']$ included in $\Gamma_{k,v'}^R$ (\textbf{Case II-T2}) shaded in yellow, rescaled so that $2^{-k}=1$.}
\label{fig:2}
\end{figure} Suppose to get a contradiction that $C[k,v',v'']$ and $C[j,w',w'']$ are distinct cores that intersect. Note that the intersection of the cores implies that the two end points $w'$ and $w''$ of $B[j,w',w'']$ lie in opposite shaded regions in Figure \ref{fig:2}. There are now several cases to consider, but we can reach a contradiction in each one. First, if $j\geq k+2$, then the intersection of $C[k,v',v'']$ and $C[j,w',w'']$ implies (by length considerations, see \eqref{e:roughlength}) that $w'$ or $w''$ lies in the empty space of the figure, where no vertex exists. Next, if $j=k+1$, then the intersection of the cores would imply that $[v',v'']$ is included as an edge in $\Gamma_{k}$, violating the bound $|v'-v''|\geq 30 C^\star 2^{-k}$.  Lastly, if $j=k$, then the intersection of the cores contradicts that fact that $v'$ is terminal in the direction from $v'$ to $v''$ and $w'$ (or $w''$) is terminal in the direction from $w'$ ($w''$) to $w''$ ($w'$). We leave the details to the reader.
\subsection{Proof of \eqref{e:goal}} In this section, we break up the proof of \eqref{e:goal} into two estimates. To state these, we first introduce some useful notation. \begin{itemize}
\item Let $\edges(k)$ denote the set of all edges $[v',v'']$ included in $\Gamma_{k}$.

\item Let $\bridges(k)$ denote the set of all bridges $B[k,v',v'']$ included in $\Gamma_{k}$.

\item Let $\phan(k)$ denote the index set of phantom length, defined above in \S \ref{ss:phantom}.
\item Let $\cores(k)$ denote the set of cores $C[k,v',v'']$ of bridges $B[k,v',v'']$ between vertices $v',v''\in V_k$ that were included in $\Gamma_k$ in \textbf{Case II-T2} for some $\Gamma_{k,v}^R$ or $\Gamma_{k,v}^L$. See \S \ref{ss:core} above.
\end{itemize}
To establish \eqref{e:goal}, it suffices to prove first that

\begin{equation}\begin{split} \label{e:sum-a}
\sum_{[v',v'']\in \edges(k_0)} &\Haus^1([v',v'']) + \sum_{B[k_0,v',v'']\in \bridges(k_0)}\Haus^1(B[k_0,v',v'']) + \sum_{(j,u)\in \phan(k_0)} p_{j,u}\\ &\leq C 2^{-k_0}\end{split}\end{equation} and second that, for all $k\geq k_0+1$,
\begin{equation}\begin{split} \label{e:sum-b}
\sum_{[v',v'']\in \edges(k)} &\Haus^1([v',v''])+ \sum_{B[k,v',v'']\in \bridges(k)}\Haus^1(B[k,v',v''])+ \sum_{(j,u)\in \phan(k)} p_{j,u} \\ &\leq \sum_{[w',w'']\in \edges({k-1})} \Haus^1([w',w'']) + \sum_{(j,u)\in \phan(k-1)} p_{j,u} \\ &\qquad\qquad\qquad\qquad+ C\sum_{v\in V_k} \alpha_{k,v}^2 2^{-k} + \frac{25}{27}\sum_{C[k,v',v'']\in\cores(k)} \Haus^1(C[k,v',v'']),\end{split}\end{equation} where $C$ denotes a constant depending only on $n$ and $C^\star$.
To see this, let $k\geq k_0+1$. Then, by \eqref{e:Gamma-def}, $$\Haus^1(\Gamma_{k}) \leq \sum_{[v',v'']\in \edges(k)} \Haus^1([v',v''])+ \sum_{j=k_0}^k\sum_{B[j,w',w'']\in \bridges(j)}\Haus^1(B[j,w',w'']).$$ Iterating \eqref{e:sum-b} a total of $k-k_0$ times, we obtain
\begin{equation*}\begin{split} \Haus^1(\Gamma_k) \leq \sum_{[v',v'']\in \edges(k_0)}&\Haus^1([v',v'']) + \sum_{B[k_0,v',v'']\in \bridges(k_0)}\Haus^1(B[k_0,v',v'']) \\ &+\sum_{(j,u)\in\phan(k_0)} p_{j,u} + C\sum_{j={k_0+1}}^k \sum_{u\in V_j}\alpha_{j,u}^2 2^{-j} \\ &\qquad\qquad\qquad+ \frac{25}{27}\sum_{j=k_0+1}^{k}\sum_{C[j,w',w'']\in\cores(j)} \Haus^1(C[j,w',w'']), \end{split} \end{equation*} where $C$ depends on at most $n$ and $C^\star$. Since the cores in $\bigcup_{j=k_0+1}^k\cores(j)$ are pairwise disjoint (see \S\ref{ss:core}) and belong to $\Gamma_k$ (see \eqref{e:Gamma-def}), it follows that \begin{equation*}\begin{split}\Haus^1(\Gamma_k) \leq \sum_{[v',v'']\in \edges(k_0)} &\Haus^1([v',v''])+ \sum_{B[k,v',v'']\in \bridges(k_0)}\Haus^1(B[k_0,v',v'']) \\ &+ \sum_{(j,u)\in\phan(k_0)} p_{j,u} + C\sum_{j={k_0+1}}^k \sum_{u\in V_j}\alpha_{j,u}^2 2^{-j}+\frac{25}{27}\Haus^1(\Gamma_k),\end{split}\end{equation*} where $C$ depends on at most $n$ and $C^\star$. Therefore, by \eqref{e:sum-a}, $$\Haus^1(\Gamma_k) \leq C\left(2^{-k_0} + \sum_{j={k_0+1}}^k\sum_{v\in V_j} \alpha_{j,v}^2 2^{-j} \right)\quad\text{for all }k\geq k_0+1,$$ where $C$ depends on at most $n$ and $C^\star$.
This shows that \eqref{e:goal} follows from \eqref{e:sum-a} and \eqref{e:sum-b}.

\subsection{Proof of \eqref{e:sum-a}} \label{ss:sum-a} Let us start with a few preliminary observations. In the curves $\Gamma_{k_0},\Gamma_{k_0+1},\dots$, an edge $[v',v'']$ is included for some $v',v''\in V_k$ only if $$|v'-v''|<30 C^\star 2^{-k},$$ while a bridge $B[k,v',v'']$ is included for some $v',v''\in V_k$ only if $$30 C^\star 2^{-k}\leq |v'-v''|<130 C^\star 2^{-k}.$$ Moreover, the lengths of extensions are controlled by ($V_\two$): For all $k\geq k_0$ and $v\in V_k$, $\Haus^1(E[k,v]) \leq 2C^\star 2^{-k}$. Thus, if $B[k,v',v'']$ is included in some curve, then \begin{equation*}\begin{split}\Haus^1(B[k,v',v''])\leq \Haus^1(E[k,v'])+\Haus^1([v',v''])&+\Haus^1(E[k,v''])\\ \leq 4C^\star 2^{-k}&+\Haus^1([v',v''])< 1.14 \Haus^1([v',v'']).\end{split}\end{equation*}

Recall that $k_0\geq 0$ was defined to be the least index such that $\#V_k\geq 2$ for all $k\geq k_0$. Hence, by ($V_\three$), there exists $y_0$ such that $V_{k_0}\subseteq B(y_0,C^\star 2^{-k_0})$ (where $y_0=x_0$ if $k_0=0$). On one hand, $$\Haus^1([v',v''])\leq \diam B(y_0, C^\star 2^{-k_0})$$ for each edge $[v',v'']$ in $\Gamma_{k_0}$. On the other hand, $$\Haus^1(B[k_0,v',v''])< 1.14 \diam B(y_0, C^\star 2^{-k_0}) $$ for each bridge $B[k_0,v',v'']$ in $\Gamma_{k_0}$. Since $\#V_{k_0}= \#(V_{k_0}\cap B(y_0, C^\star 2^{-k_0}))\lesssim_{n,C^\star} 1$ by ($V_I$), it follows that $$\sum_{[v',v'']\in \edges(k_0)} \Haus^1([v',v'']) + \sum_{B[k_0,v',v'']\in \bridges(k_0)}\Haus^1(B[k_0,v',v''])\lesssim_{n,C^\star} 2^{-k_0}.$$
Also, since $\#V_{k_0}\lesssim_{n,C^\star} 1$, the total amount of phantom length associated to $\phan(k_0)$ can be estimated by $$\sum_{(j,u)\in \phan(k_0)} p_{j,u}\leq \sum_{v\in V_{k_0}} p_{k_0,v}+\sum_{v',v''\in V_{k_0}} p_{k_0,v',v''}\lesssim_{n,C^\star} 2^{-k_0}.$$ Combining the previous two displayed equations yields \eqref{e:sum-a}.

\subsection{Proof of \eqref{e:sum-b}} \label{ss:sum-b}
Edges and bridges forming the curve $\Gamma_k$ and ``new" phantom length associated to pairs in $\phan(k)\setminus\phan(k-1)$ may enter the local picture $\Gamma_{k,v}$ of $\Gamma_k$ near $v$ for several vertices $v\in V_k$, but only need to be accounted for once each to estimate the left hand side of \eqref{e:sum-b}. We prioritize as follows: \begin{enumerate}
\item[1.] \textbf{Case I} edges, \textbf{Case I} bridges, \textbf{Case I} phantom length;
\item[2.] \textbf{Case II-T1} phantom length and (parts of) edges that are nearby \textbf{Case II-T1} terminal vertices (where here and below \emph{nearby} means distance at most $2C^\star 2^{-k}$);
\item[3.] \textbf{Case II-T2} bridges, \textbf{Case II-T2} phantom length, and (parts of) edges that are nearby \textbf{Case II-T2} terminal vertices;
\item[4.] remaining (parts of) edges, which are necessarily far away from \textbf{Case I} vertices and \textbf{Case II-T1} and \textbf{Case II-T2} terminal vertices.
\end{enumerate}

\textbf{First Estimate (Case I):} Suppose that $\alpha_{k,v}\geq \varepsilon$. Abbreviate $B(v,65 C^\star 2^{-k})=:B_{k,v}$. Since $\#(V_k\cap B_{k,v})\lesssim_{n,C^\star} 1$ by ($V_I$), $\varepsilon\leq \alpha_{k,v}$, and $\varepsilon\gtrsim 1$, it follows that $$\sum_{\stackrel{v',v''\in V_k\cap  B_{k,v}}{[v',v'']\in \edges(k)}} \Haus^1([v',v'']) + \sum_{\stackrel{v',v''\in V_k\cap  B_{k,v}}{B[k,v',v'']\in \bridges(k)}}\Haus^1(B[k,v',v''])\lesssim_{n,C^\star} 2^{-k}\lesssim_{n,C^\star} \alpha_{k,v}^2 2^{-k}.$$ Similarly, the total amount of phantom length associated to $\Gamma_{k,v}$ does not exceed $$\sum_{v'\in V_k\cap B_{k,v}} p_{k,v'}+\sum_{\stackrel{v',v''\in V_k\cap B_{k,v}}{|v'-v''|\geq 30 C^\star 2^{-k}}}p_{k,v',v''}\lesssim_{n,C^\star} 2^{-k}\lesssim_{n,C^\star} \alpha_{k,v}^2 2^{-k}.$$

\textbf{Second Estimate (Case II-T1):} Suppose that $\alpha_{k,v}<\varepsilon$ and $v$ is terminal to the right with alternative \textbf{T1}. (The case when $v$ is terminal to the left is handled analogously.) Because alternative \textbf{T1} holds, the terminal vertex property ensures that $(k-1,w_{v,r})\in \phan(k-1)$, where $w_{v,r}$ is the rightmost vertex in $V_{k-1}\cap B(v,C^\star 2^{-(k-1)})$. Use half of the phantom length $p_{k-1,w_{v,r}}=3 C^\star 2^{-(k-1)}$ to pay for the phantom length $p_{k,v}= 3 C^\star 2^{-k}$. Use the other half of $p_{k-1,w_{v,r}}$ to pay for edges or parts of edges in $\Gamma_{k}\cap B(v,2C^\star 2^{-k})$, which by Lemma \ref{l:graph} is less than $3C^\star 2^{-k}$ as $(1+3\varepsilon^2)2<3$. That is, $$p_{k,v} + \sum_{[v',v'']\in\edges(k)} \Haus^1([v',v'']\cap B(v,2C^\star 2^{-k})) \leq p_{k-1,w_{v,r}}.$$

A degenerate case may occur if $w_{v,r}\in V_{k-1}$ is simultaneously terminal to the left and to the right (that is, if no edges in $\Gamma_{k-1}$ emanate from $w_{v,r}$), but $v$ is not terminal to the left. Let $v_l$ be the leftmost vertex in $V_k\cap B(v, 30C^\star 2^{-k})$ relative to the projection onto $\ell_{k,v}$. Then $v_l$ and $v$ are distinct, because $v$ is not terminal to the left. Assume that $\alpha_{k,v_l}<\varepsilon$ (otherwise, everything is paid for by the previous set of estimates). Then $v_l$ is terminal in the direction away from $v$ with alternative \textbf{T1}. This situation is degenerate, because the phantom length $p_{k-1,w_{v,r}}$ is not enough to pay for $p_{k,v}$, $p_{k,v_l}$, and the edges connecting $v_l$ and $v$. We handle this as follows. First, note that $|v_l-v|<2C^\star 2^{-k}$, because $w_{v,r}\in B(v_l, C^\star 2^{-k})\cap B(v, C^\star 2^{-k})$ by $(V_\two)$. In particular, since $(1+3\varepsilon^2)2<3$, the length of edges connecting $v$ and $v_l$ is less than $3C^\star 2^{-k}$ by Lemma \ref{l:graph}. Secondly, because $\#V_{k-1}\geq 2$ and $\Gamma_{k-1}$ is connected, $w_{v,r}$ must belong to an extension $E[j,u']$ of a bridge $B[j,u',u'']$ included in $\Gamma_j$ for some $k_0\leq j\leq k-1$ and some $u',u''\in V_j$. Let $\tilde v$ be the unique point in $V_k\cap E[j,u']$. By the bridge property, $(k,\tilde v)\in \phan(k-1)$. Then $$p_{k,v} + p_{k,v_l}+ \sum_{[v',v'']\in\edges(k)} \Haus^1([v',v'']\cap B(v,2C^\star 2^{-k})) \leq p_{k-1,w_{v,r}}+p_{k,\tilde v}.$$

\textbf{Third Estimate (Case II-T2):}
Suppose that $\alpha_{k,v}<\varepsilon$ and $v$ is terminal to the right with alternative \textbf{T2}. (The case when $v$ is terminal to the left is handled analogously.) Write $v_1\in V_k$ and $w_{v,r},w_{v,r+1}\in V_{k-1}$ for the vertices appearing in the definition of $\Gamma_{k,v}^R$. In this case, we will pay for $p_{k,v,v_1}$, the length of the bridge $B[k,v,v_1]$, and the length of edges in $\Gamma_k\cap B(v,2C^\star 2^{-k})$ and $\Gamma_k\cap B(v_1,2C^\star 2^{-k})$. Assume that $\alpha_{k,v_1}<\varepsilon$ (otherwise, everything is paid for by the first set of estimates). In \S\ref{ss:sum-a}, we noted that $$\Haus^1(B[k,v,v_1])\leq 4C^\star 2^{-k}+\Haus^1([v,v_1]).$$ Because $|v-w_{v,r}|<2C^\star 2^{-k}$ and $|v_1-w_{v,r+1}|< 2C^\star 2^{-k}$, it follows that $$\Haus^1(B[k,v,v_1])\leq 4C^\star 2^{-k}+\Haus^1([v,v_1])\leq 8C^\star2^{-k}+ \Haus^1([w_{v,r},w_{v,r+1}]).$$ The totality $p_{k,v,v_1}$ of phantom length associated to vertices in $B[k,v,v_1]$ is $12C^\star 2^{-k}$. Finally, since $\alpha_{k,v}<\varepsilon$ and $\alpha_{k,v_1}<\varepsilon$, the total length of (parts of) edges in $$\Gamma_k\cap (B(v,2C^\star 2^{-k})\cup B(v_1,2C^\star 2^{-k}))$$ does not exceed $5 C^\star 2^{-k}$ by Lemma \ref{l:graph} since $(1+3\varepsilon^2)2<2.5$. Altogether, \begin{equation*}\begin{split}
\Haus^1(B[k,v,v_1])+ p_{k,v,v_1} + \sum_{[v',v'']\in\edges(k)} \Haus^1\Big([v',v'']\cap (B(v,2C^\star 2^{-k})\cup B(v_1,2C^\star 2^{-k}))\Big)\\
\leq \Haus^1([w_{v,r},w_{v,r+1}]) + 25 C^\star 2^{-k} \stackrel{(9.1)}{\leq} \Haus^1([w_{v,r},w_{v,r+1}])+ \frac{25}{27}\Haus^1(C[k,v,v_1]),\end{split}\end{equation*} where $[w_{v,r},w_{v,r+1}]\in\edges(k-1)$ and $C[k,v,v_1]\in\cores(k).$

\textbf{Fourth Estimate (Case II-NT):} Let $[v',v'']$ be an edge between vertices $v',v''\in V_k$, which is not yet wholly paid for. Then $\alpha_{k,v'}<\varepsilon$ and $\alpha_{k,v''}<\varepsilon$. Let $[u',u'']$ be the largest closed subinterval of $[v',v'']$ such that $u'$ and $u''$ lie at distance at least $2C^\star 2^{-k}$ away from  \textbf{Case II-T1} and \textbf{Case II-T2} terminal vertices (see Figure \ref{fig:4}). By Lemma \ref{l:graph}, \begin{equation*}\begin{split}\Haus^1([u',u'']) &\leq (1+3\alpha_{k,v'}^2)\Haus^1([\pi_{k,v'}(u'),\pi_{k,v'}(u'')])\\ &< \Haus^1([\pi_{k,v'}(u'),\pi_{k,v'}(u'')])+90 C^\star \alpha_{k,v'}^2 2^{-k}.\end{split}\end{equation*} Without loss of generality, suppose that $u'$ lies to the left of $u''$ relative to the order of their projection onto $\ell_{k,v'}$. Let $z'$ denote the first vertex in $V_k\cap B(v',65 C^\star 2^{-k})$ to the left of $u'$ (relative to the order of their projection onto $\ell_{k,v'}$) such that $$\pi_{k,v'}(z')<\pi_{k,v'}(u') -C^\star 2^{-k}.$$ Let $z''$ denote the first vertex in $V_k\cap B(v',65 C^\star 2^{-k})$ to the right of $u''$ (relative to the order of their projection onto $\ell_{k,v'}$) such that $$\pi_{k,v'}(u'')+ C^\star 2^{-k}< \pi_{k,v'}(z'').$$ The vertices $z'$ and $z''$ exist, because $u'$ and $u''$ stay a distance of at least $2C^\star 2^{-k}$ away from \textbf{Case II-T1} and \textbf{Case II-T2} terminal vertices and $\alpha_{k,v'}<\varepsilon$. By ($V_\three$), we can find $w',w''\in V_{k-1}$ such that $|w'-z'|< C^\star 2^{-k}$ and $|w''-z''|< C^\star 2^{-k}$. It follows that \begin{equation*}\pi_{k,v'}(w')<\pi_{k,v'}(u')< \pi_{k,v'}(u'')<\pi_{k,v'}(w'').\end{equation*} Since $|z'-v'|< 30 C^\star 2^{-k}$, $|v'-v''|< 30C^\star 2^{-k}$, and $|v''-z''|< 30 C^\star 2^{-k}$, there exists a sequence of edges in $\Gamma_{k-1}\cap B(v',65 C^\star 2^{-k})$ connecting $w'$ and $w''$ in $\Gamma_{k-1}$ by $(V_\three)$. Hence we can pay for $\Haus^1([\pi_{k,v'}(u'),\pi_{k,v'}(u'')])$ using the portion of (edges in) the curve $\Gamma_{k-1}\cap B(v',65 C^\star 2^{-k})$ that lies over the line segment $[\pi_{k,v'}(u'),\pi_{k,v'}(u'')]$. Thus, $$\Haus^1([u',u'']) \leq \Haus^1\left(E_{k-1}(v')\cap \pi_{k,v'}^{-1}([\pi_{k,v'}(u'),\pi_{k,v'}(u'')])\right)+ 90C^\star \alpha_{k,v'}^2 2^{-k},$$ where $E_{k-1}(v')$ denotes the union of edges in $\Gamma_{k-1}$ between vertices in $V_{k-1}\cap B(v',65C^\star 2^{-k})$.

\begin{figure}
\includegraphics[width=.8\textwidth]{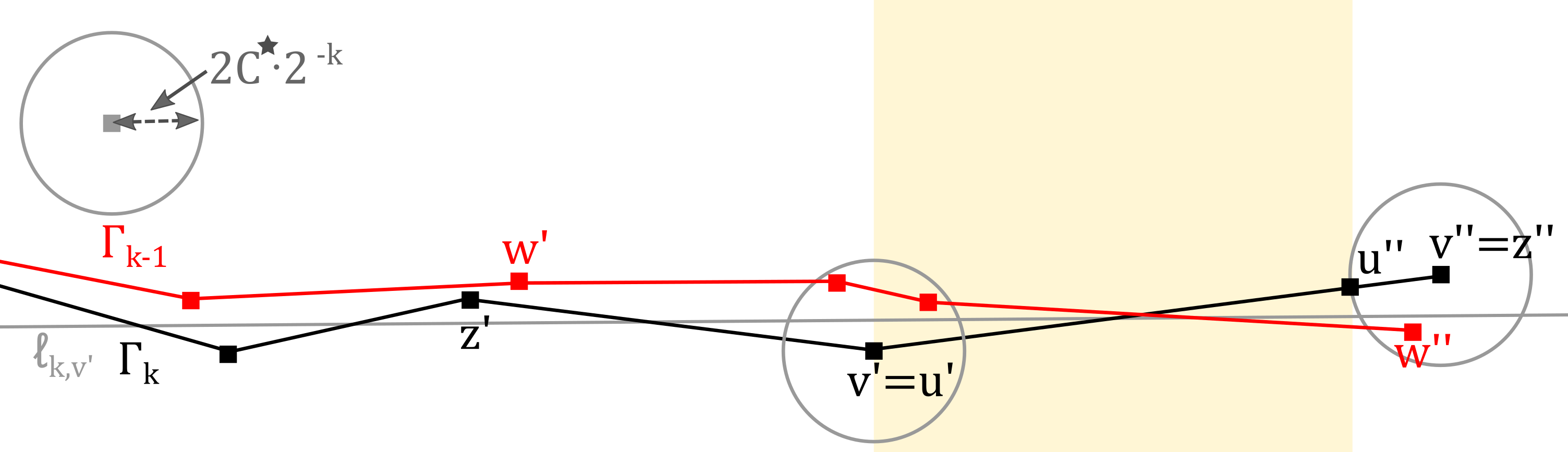}
\caption{The points $v',v'',u',u'',z',z'',w',w''$ in the Fourth Estimate, displayed with $v'$ non-terminal and $v''$ \textbf{Case II-T1} terminal to the right. The shaded yellow box represents $\pi_{k,v'}^{-1}([\pi_{k,v'}(u'),\pi_{k,v'}(u'')])$.}
\label{fig:4}
\end{figure}

All that remains is to estimate the length of overlaps of sets of the form $$S_{k,v'}[u',u'']:=E_{k-1}(v')\cap \pi_{k,v'}^{-1}([\pi_{k,v'}(u'),\pi_{k,v'}(u'')]).$$ Since $S_{k,v'}[u',u'']\subseteq S_{k,v'}[v',v'']$, it clearly suffices to estimate the length of the overlaps of sets $S_{k,v'}[v',v'']$.
Suppose that $v,v',v''$ are consecutive vertices in $V_k\cap B(v',65C^\star 2^{-k})$ such that $v$, $v'$, and $v''$ lie at distance at least $65C^\star 2^{-k}$ from \textbf{Case I} vertices and distance at least $2C^\star 2^{-k}$ from \textbf{Case II-T1} and \textbf{Case II-T2} terminal vertices. We will show that \begin{equation}\label{e:o1}\Haus^1(S_{k,v}[v,v']\cap S_{k,v'}[v',v'']) < 40\alpha^2 2^{-k},\quad\alpha=\max\{\alpha_{k,v},\alpha_{k,v'}\}.\end{equation} To start, let $\ell_1$ and $\ell_2$ be lines chosen so that $\ell_1$ is parallel to $\ell_{k,v}$, $\ell_2$ is parallel to $\ell_{k,v'}$, and $\ell_1$ and $\ell_2$ pass through $v'$. Note that, by the triangle inequality, \begin{equation}\label{e:2alpha}\dist(x,\ell_i) \leq 2\alpha 2^{-k}\quad\text{for all $x\in (V_{k-1}\cup V_k)\cap B(v',65 C^\star 2^{-k})$ and $i\in\{1,2\}$}.\end{equation} Let $\pi_i$ denote the orthogonal projection onto $\ell_i$ and let $N_i$ denote the closed tubular neighborhood of $\ell_i$ of radius $2\alpha 2^{-k}$. Also let $E_{k-1}(v,v'):=E_{k-1}(v)\cap E_{k-1}(v')$. By \eqref{e:2alpha}, \begin{equation}\begin{split} \label{e:o2} S_{k,v}[v,v'] &\cap S_{k,v'}[v',v'']\\ &\subseteq E_{k-1}(v,v')\cap \pi_1^{-1}([\pi_1(v),\pi_1(v')])\cap N_1\cap \pi_2^{-1}([\pi_2(v'),\pi_2(v'')])\cap N_2 \\ &=: E_{k-1}(v,v')\cap S.\end{split}\end{equation}  Since $2\alpha \leq 1/16$, Lemma \ref{l:graph} implies that \begin{equation}\label{e:o3}\Haus^1(E_{k-1}(v,v')\cap S)< (1+3(2\alpha)^2)\Haus^1(\pi_1(S))< 2\Haus^1(\pi_1(S)).\end{equation} Let $\theta$ be the angle subtended by $\ell_1$ and $\ell_2$ (see Figure \ref{fig:3}). By Lemma \ref{l:graph} \eqref{e:graph2}, we have $$\cos(\theta) > \frac{1}{1+12(2\alpha)^2}\geq   1-48\alpha^2>0.$$ Hence $1-\sin^2(\theta)=\cos^2(\theta) > 1-96\alpha^2.$ In particular, we obtain $\sin(\theta) < \sqrt{96}\alpha<10\alpha.$  Thus, by elementary geometry, \begin{equation}\label{e:o4}\Haus^1(\pi_1(S))\leq 2\alpha 2^{-k} \sin(\theta)\leq 20 \alpha^2 2^{-k}.\end{equation} Combining \eqref{e:o2}, \eqref{e:o3}, and \eqref{e:o4} establishes \eqref{e:o1}.

Carefully tallying the four estimates above, one obtains \eqref{e:sum-b}.

\begin{figure}
\includegraphics[width=.7\textwidth]{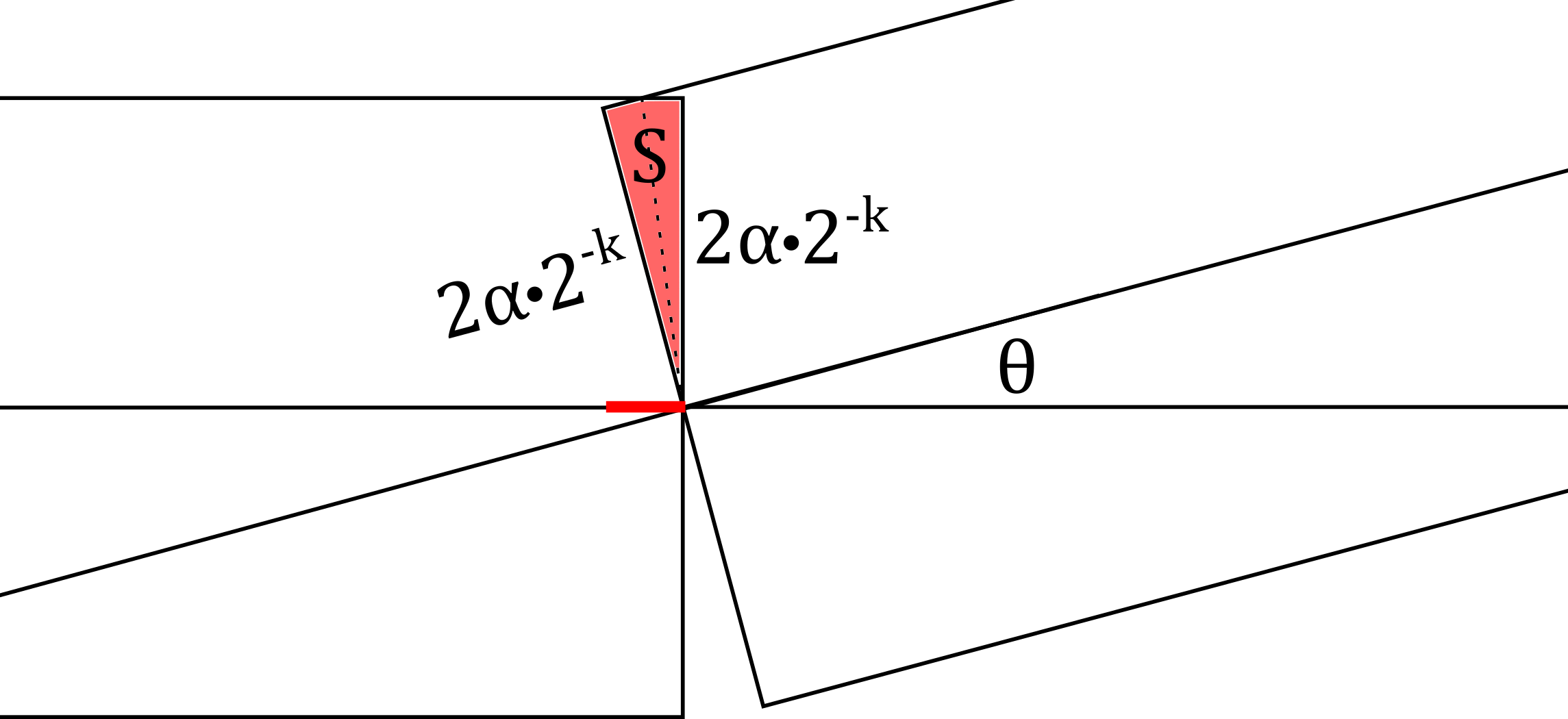}
\caption{Inside the 2-plane containing the lines $\ell_1$ and $\ell_2$, the diamond-shaped region $S$ is the union of two congruent triangles. The length of the red shadow $\pi_1(S)$ is no greater than $20 \alpha^2 2^{-k}$.}
\label{fig:3}
\end{figure}

\appendix
\section{Proof of Lemmas \ref{l:V} and \ref{l:graph}}
\label{ss:lemmas}

The proof of Lemma \ref{l:V} uses elementary properties of excess and Hausdorff distance; for a comprehensive reference, we recommend the monograph \cite{Beer} by Beer.

Recall that for nonempty sets $S,T\subseteq\RR^n$, the \emph{excess $\excess(S,T)$ of $S$ over $T$} is defined by $$\excess(S,T):= \sup_{s\in S}\inf_{t\in T} |s-t|$$ and the \emph{Hausdorff distance $\HD(S,T)$ between $S$ and $T$} is defined by $$\HD(S,T):=\max\{\excess(S,T),\excess(T,S)\}.$$ The excess satisfies the triangle inequality in the sense that $\excess(S,T)\leq \excess(S,U)+\excess(U,T)$ for all nonempty $S,T,U\subseteq\RR^n$. The set of nonempty compact subsets of $\RR^n$ equipped with the Hausdorff distance is a metric space. Thus, when restricted to the nonempty compact sets, we may refer to the Hausdorff distance as the \emph{Hausdorff metric}.

\begin{proof}[Proof of Lemma \ref{l:V}] Let $n\geq 1$, $C^\star>1$, and $r_0>0$. Assume that $V_0,V_1,V_2,\dots$ is a sequence of nonempty finite subsets of a bounded set $B$ satisfying condition ($V_\three$) of Proposition \ref{p:curve2}. Because $\overline{B}$ is compact and $(V_k)_{k=0}^\infty$ is a sequence of closed subsets of $\overline{B}$, there exist a subsequence $(V_{k_j})_{j=0}^\infty$ and a closed set $V\subseteq \overline{B}$ such that $V_{k_j}$ converges to $V$ in the Hausdorff metric as $j\rightarrow\infty$ by Blaschke's selection theorem (e.g.~see \cite[p.~90]{Rogers} or \cite[\S3.2]{Beer}). By iterating ($V_\three$), we obtain that for any $k_j< k <k_{j+1}$, \begin{align*} \excess(V_k,V_{k_j}) &\leq C^\star(2^{-(k_j+1)}+\dots+2^{-k})r_0< C^\star2^{-k_j}r_0 \quad \text{and}\\
\excess(V_{k_{j+1}},V_k) &\leq C^\star(2^{-(k+1)}+\dots+2^{-k_{j+1}})r_0<C^\star2^{-k}r_0.\end{align*} Thus, by the triangle inequality for excess, \begin{align*} \excess(V_k,V) &\leq \excess(V_k,V_{k_j})+\excess(V_{k_j},V)< C^\star 2^{-{k_j}}r_0+\HD(V_{k_j},V)\quad\text{and}\\
\excess(V,V_k) &\leq \excess(V,V_{k_{j+1}})+\excess(V_{k_{j+1}},V_k) < \HD(V,V_{k_{j+1}})+C^\star2^{-k}r_0. \end{align*} Therefore, $$\HD(V_k,V) < C^\star 2^{-k_j}r_0+\max\left\{ \HD(V_{k_j},V),\HD(V,V_{k_{j+1}})\right\}$$ whenever $k_j<k<k_{j+1}$. We conclude that the whole sequence $V_k$ converges to $V$ in the Hausdorff metric as $k\rightarrow\infty$.
\end{proof}

The proof of Lemma \ref{l:graph} that we give uses the area formula for Lipschitz graphs; for a nice presentation, see \S3.3 of the book \cite{EG} by Evans and Gariepy.

\begin{proof}[Proof of Lemma \ref{l:graph}] Let $V\subseteq\RR^n$ be a 1-separated set with at least two points. Assume that there exist straight lines $\ell_1$ and $\ell_2$ in $\RR^n$ and a number $0\leq \alpha \leq 1/16$ such that $$\dist(v,\ell_i)\leq \alpha\quad\text{for all $v\in V$ and $i=1,2$}.$$ Let $\pi_i$ denote the orthogonal projection onto $\ell_i$. Let $\pi_i^\perp$ denote the orthogonal projection onto an orthogonal complement of $\ell_i$. For any distinct pair of points $v_1,v_2\in V$, $$1\leq |v_1-v_2|^2 = |\pi_{i}(v_1)-\pi_i(v_2)|^2 +|\pi_i^\perp(v_1)-\pi_i^\perp(v_1)|^2\leq|\pi_i(v_1)-\pi_i(v_2)|^2 + 4\alpha^2,$$ because $V$ is 1-separated and the distance of points in $V$ to $\ell_i$ is bounded by $\alpha$. Hence \begin{equation}\label{e:graph3}|\pi_i(v_1)-\pi_i(v_2)|^2\geq |v_1-v_2|^2-4\alpha^2\geq 1-4\alpha^2.\end{equation} It follows that $V$ belongs to the graph of a piecewise linear function $g_i:\ell_i\rightarrow\ell_i^\perp$ such that $$\|\nabla g_i\|_{\infty} \leq \frac{2\alpha}{\sqrt{1-4\alpha^2}}.$$ By the area formula for Lipschitz maps, for any line segment $[u_1,u_2]$ in the graph of $g_i$, $$\Haus^1([u_1,u_2]) = \int_{[\pi_i(u_1),\pi_i(u_2)]} \sqrt{1+|\nabla g_i|^2}\,d\Haus^1.$$ Since $\sqrt{1+x^2}\leq 1+\frac12 x^2$ for all $x\in\RR$, we conclude that $$\Haus^1([u_1,u_2])\leq \int_{[\pi_i(u_1),\pi_i(u_2)]} \left(1+\tfrac{1}{2}|\nabla g|^2\right)\,d\Haus^1\leq \left(1+\frac{2\alpha^2}{1-4\alpha^2}\right)\Haus^1([\pi_i(u_1),\pi_i(u_2)])$$ for all $[u_1,u_2]\subseteq[v_1,v_2]$.

Because $\dist(v,\ell_i)\leq \alpha$ for all $v\in V$ and $i=1,2$, we can find points $z_1,z_2\in\ell_2$ such that $|z_1-z_2|\geq 1-2\alpha$ and $\dist(z_i,\ell_1)\leq 2\alpha$ for $i=1,2$. Thus, by analogous computation with $\beta=2\alpha/(1-2\alpha)$ in place of $\alpha$, $$\Haus^1([y_1,y_2])\leq \left(1+\frac{2\beta^2}{1-4\beta^2}\right)\Haus^1([\pi_1(y_1),\pi_1(y_2)])$$ for all $[y_1,y_2]\subseteq\ell_2$.

Now, since $0\leq \alpha\leq 1/16$, $$\frac{2\alpha^2}{1-4\alpha^2} \leq \frac{128}{63}\alpha^2<3\alpha^2,
\quad \beta=\frac{2\alpha}{1-2\alpha}\leq \frac{16}{7}\alpha\leq \frac{1}{7},\quad\text{and}$$ $$\frac{2\beta^2}{1-4\beta^2}\leq \frac{98}{45}\beta^2\leq \frac{512}{45}\alpha^2<12\alpha^2.$$
This establishes \eqref{e:graph1} and \eqref{e:graph2}.

Finally, suppose that there exist identifications of $\ell_1$ and $\ell_2$ with $\RR$ and distinct points $v,v',v''\in V$ such that $\pi_1(v)<\pi_1(v')< \pi_1(v'')$, but $\pi_2(v')<\pi_2(v)<\pi_2(v'')$. By \eqref{e:graph3}, for any distinct $w,w'\in V$ and $i\in\{1,2\}$ such that $\pi_i(w)<\pi_i(w')$, we have \begin{equation}\label{e:graph4} |w'-w| \geq \pi_i(w')-\pi_i(w) \geq \sqrt{|w'-w|^2 - 4\alpha^2} \geq |w'-w|-2\alpha >0.875,\end{equation} where the last two inequalities hold since $b \leq 2a+\sqrt{b^2-4a^2}$ for all $0\leq 2a\leq b$, $0\leq 2\alpha \leq 1/8$, and $1 \leq  |w'-w|$. Set $$x:=|v-v'|,\quad y:=|v''-v'|,\quad\text{and}\quad z:=|v''-v|.$$ On one hand, since $\pi_1(v)<\pi_1(v')<\pi_1(v'')$, we have $z\approx x+y$. On the other hand, since $\pi_2(v')<\pi_2(v)<\pi_2(v'')$, we have $y \approx x+z$. Hence $z\approx z+2x$, which yields a contradiction if $\alpha$ is sufficiently small. More precisely, by repeated application of \eqref{e:graph4}, $$z\geq x+y - 4\alpha \geq 2x+z - 8\alpha >1.75+z-8\alpha.$$ Hence $1.75< 8\alpha \leq 1/2$ (since $\alpha\leq 1/16$), which is absurd. We conclude that under any choice of identifications of $\ell_1$ and $\ell_2$ with $\RR$, either $\pi_1(v)\leq \pi_1(v')$ if and only if $\pi_2(v)\leq \pi_2(v')$ for all $v,v'\in V$, or $\pi_1(v)\leq \pi_1(v')$ if and only if $\pi_2(v)\geq \pi_2(v')$ for all $v,v'\in V$. In particular, one can always choose compatible identifications of $\ell_1$ and $\ell_2$ with $\RR$ such that $\pi_1(v')\leq \pi_1(v'')$ if and only if $\pi_2(v')\leq \pi_2(v'')$ for all $v',v''\in V$.
\end{proof}

\bibliography{spd}{}
\bibliographystyle{amsbeta}

\end{document}